\theoremstyle{dremark}
\newtheorem*{acknowledgement}{Acknowledgement}
\newcommand{\pref}[1]{\eqref{#1}}
\newcommand{\enumref}[1]{\ref{#1}}
\newcommand\inj{\hookrightarrow}
\newcommand\surj{\twoheadrightarrow}
\newcommand\iso{\cong} 
\def\map#1#2#3{#1\,:\, #2\rightarrow #3}
\def\surjmap#1#2#3{#1\,:\, #2\twoheadrightarrow #3}
\newcommand{\id}[1]{\mathrm{id}_{#1}} 
\newcommand{\N}{\mathbb{N}}
\newcommand{\Z}{\mathbb{Z}}
\newcommand{\Q}{\mathbb{Q}}
\newcommand{\F}{\mathbb{F}}
\newcommand{\I}{\mathcal{I}}
\newcommand{\Sets}{\mathbf{Sets}}
\newcommand{\Mod}{\mathbf{Mod}} 
\newcommand{\Alg}{\mathbf{Alg}} 
\newcommand{\xAlg}{\text{\nobreakdash--}\Alg} 
\newcommand{\xMod}{\text{\nobreakdash--}\Mod} 
\newcommand{\fF}{\mathcal{F}}
\newcommand{\divides}{\mid}
\newcommand{\notdivide}{\nmid}
\newcommand{\ceil}[1]{\left\lceil#1\right\rceil}
\newcommand{\floor}[1]{\left\lfloor#1\right\rfloor}
\newcommand{\SG}[1]{{\mathfrak{S}_{#1}}} 
\DeclareMathOperator{\Hom}{Hom}
\DeclareMathOperator{\Sym}{Sym}
\DeclareMathOperator{\image}{im}
\newcommand{\ord}{\mathrm{ord}}
\newcommand{\Spec}{\mathrm{Spec}}
\newcommand{\red}{\mathrm{red}}
\newcommand{\Chow}{\mathrm{Chow}}
\newcommand{\A}[1]{\mathbb{A}^{#1}}    
\newcommand{\PR}[1]{\mathbb{P}^{#1}}    
\newcommand{\T}{\mathrm{T}}
\newcommand{\TS}{\mathrm{TS}}
\newcommand{\SYM}{\mathrm{S}}    
\newcommand{\Hompl}[1]{{\mathrm{Pol}^{#1}}}
\newcommand{\Hommpl}[1]{{\mathrm{Pol}_{\mathrm{mult}}^{#1}}}
\def\bigtimes{\mathop{\times}}
\DeclareMathOperator{\mdeg}{mdeg}
\def\basis#1{\mathbf{e}_{#1}}
\newcommand{\mon}{\mathcal{M}}
\newcommand{\monpos}{{\mon^{*}}}
\newcommand{\calC}{\mathcal{C}}
\newcommand{\calS}{\mathcal{S}}
\newcommand{\calT}{\mathcal{T}}
\newcommand{\primes}{\mathcal{P}}
\newcommand{\gammaid}[1]{\mathbf{1}^{#1}}
\newcommand{\ltdeg}[1]{{\left\langle#1\right\rangle}}
\newcommand{\msb}[1]{\mathbf{z}_{#1}}
\newcommand{\Qpol}[2]{Q_{#1}(#2)}
\newcommand{\Qpolp}[2]{Q_{#1}\bigl(#2\bigr)}
\newcommand{\Qp}[1]{\Qpol{p}{#1}}
\newcommand{\Qpp}[1]{\Qpolp{p}{#1}}
\begin{document}

\title[The ring of multisymmetric functions]
{A minimal set of generators for the\\ring of multisymmetric functions}
\author[D. Rydh]{David Rydh}
\address{Department of Mathematics, KTH, 100 44 Stockholm, Sweden}
\email{dary@math.kth.se}
\date{2007-05-21}
\subjclass[2000]{Prim. 13A50, 05E05;
  Sec. 14L30, 14C05}
\keywords{Symmetric functions, generators, divided powers, vector invariants}



\begin{abstract}
The purpose of this article is to give, for any (commutative) ring $A$, an
explicit minimal set of generators for the ring of multisymmetric functions
$\TS^d_A(A[x_1,\dots,x_r])= \bigl(A[x_1,\dots,x_r]^{\otimes_A
d}\bigr)^{\SG{d}}$ as an $A$-algebra. In characteristic zero, i.e. when $A$ is
a $\Q$-algebra, a minimal set of generators has been known since the
19\textsuperscript{th} century. A rather small generating set in the general
case has also recently been given by Vaccarino but it is not minimal in
general.  We also give a sharp degree bound on the generators, improving the
degree bound previously obtained by Fleischmann.

As $\Gamma^d_A(A[x_1,\dots,x_r])=\TS^d_A(A[x_1,\dots,x_r])$ we also obtain
generators for divided powers algebras: If $B$ is a finitely generated
$A$-algebra with a given surjection $A[x_1,x_2,\dots,x_r]\to B$ then using
the corresponding surjection $\Gamma^d_A(A[x_1,\dots,x_r])\to \Gamma^d_A(B)$
we get generators for $\Gamma^d_A(B)$.
\end{abstract}

\maketitle


\begin{section}{Introduction}

Let $k$ be a field of characteristic zero. Explicit generators for the ring of
multisymmetric functions $\bigl(k[x_1,\dots,x_r]^{\otimes_k d}\bigr)^{\SG{d}}$
have been known since the nineteenth century, cf.~\cite{schlafli,junker_93}.
At the end of the same century non-constructive methods began to appear, in
particular Hilbert's basis theorem~\cite{hilbert_basis_theorem}. An easy
consequence of this theorem is that if a finite group $G$ acts linearly on a
polynomial ring over a field $k$ and $|G|$ is invertible in $k$, then the
invariant ring is finitely generated, cf.~\cite[\S 57]{weber_lehrbuch_vol2}.
In particular, we may deduce that
$\bigl(k[x_1,\dots,x_r]^{\otimes_k d}\bigr)^{\SG{d}}$ is finitely generated
without finding explicit generators.


The first result on the finiteness of the invariant ring of a group action in
characteristic $p$ was given by Noether~\cite{noether_endlichkeitssatz_char_p}.
Her argument is essentially the following: Let $A$ be a noetherian ring
and $G=\{g_1,g_2,\dots,g_m\}$ a finite group acting on $B=A[x_1,\dots,x_n]$.
Let $C=A[e_{11},e_{12},\dots,e_{nm}]\subseteq B$ where $e_{ij}$ is the
$j$\textsuperscript{th} elementary symmetric function
in $g_1(x_i),g_2(x_i),\dots,g_m(x_i)$. The $A$-algebra $C$ is finitely
generated and hence noetherian. As $B$ is finite over $C$ and
$C\subseteq B^G\subseteq B$ it follows that $B^G$ is finite over $C$ and
thus finitely generated as an $A$-algebra.
In particular $\bigl(A[x_1,\dots,x_r]^{\otimes_A d}\bigr)^{\SG{d}}$ is
finitely generated for any noetherian ring $A$.

The abstract methods partly removed the need for explicit generators.
However, interest in effective answers reappeared in the end of the twentieth
century. One of the first results in this direction was given by Campbell,
Hughes and Pollack~\cite{chp_gen_TS} who showed that the ring of multisymmetric
functions can be generated by elements of degree
$\leq\max\bigl(d,rd(d-1)/2\bigr)$. In characteristic $0$, the explicit
generators have degree $\leq d$.

Some years later Fleischmann~\cite[Thm. 4.6]{fleischmann} improved this
degree bound to $\leq\max\bigl(d,r(d-1)\bigr)$ and also
showed~\cite[Thm. 4.7]{fleischmann} that this was the best possible if $A=\F_p$
and $d=p^s$, cf. Corollary~\pref{C:non-sharp_total_degree_bound}.
Vaccarino~\cite[Thm. 1]{vaccarino_gen_TS} then used this result
to give explicit generators, cf. Theorem~\pref{T:Vaccarino_result}.
Fleischmann's degree bound is, however, not always sharp and the corresponding
generating set is not minimal.

For any positive integer $n$ and prime $p$ we let
$\Qp{n}(t)=a_s t^s+a_{s-1} t^{s-1}+\dots+a_0\in\N[t]$ where
$a_s a_{s-1}\dots a_0$ is the representation of $n$ in base $p$.
We will prove the following theorem, cf. Theorem~\pref{T:Gamma_generators}:

\begin{theorem*}
Let $A$ be any ring and $r,d\geq 1$ positive integers. The ring of
multisymmetric functions $\bigl(A[x_1,\dots,x_r]^{\otimes_A d}\bigr)^{\SG{d}}$
is \emph{minimally} generated as an $A$-algebra by the elements
$$e_k(x^\alpha)=(x^\alpha)^{\otimes k}\otimes 1^{\otimes d-k}+\dots+
1^{\otimes d-k}\otimes (x^\alpha)^{\otimes k}$$
where $e_k$ is the $k$\textsuperscript{th} elementary symmetric function on $d$
variables and $(k,\alpha)\in\{1,2,\dots,d\}\times (\N^r\setminus 0)$ are such
that $\gcd(\alpha)=1$ and either $k|\alpha|\leq d$ or there is a prime $p$, not
invertible in $A$, such that
$\Qp{k\alpha_1}+\Qp{k\alpha_2}+\dots+\Qp{k\alpha_r}\leq\Qp{d}$.
\end{theorem*}

It is then easy to obtain the following sharp degree bound,
cf. Corollary~\pref{C:sharp_total_degree_bound}:

\begin{corollary*}
Let $A$ be any ring and $r,d\geq 1$ positive integers. For any prime $p$
we let $a_p$ and $b_p$ be defined by $\Qp{d}(t)=a_p t^{b_p}+\dots$. The
ring of multisymmetric functions is then generated by the elements of
degree at most
$$\max\left\{d,\max_p\left( (a_p+r-1)p^{b_p}-r\right)\right\}$$
where the maximum is taken over every prime $p$ not invertible in $A$. Further,
every generating set contains an element attaining this bound.
\end{corollary*}

\vspace{3 mm}
The ring of multisymmetric functions is usually described as the symmetric
tensors of the polynomial ring in $r$ variables
$$\TS^d_A(A[x_1,\dots,x_r]):=
\bigl(A[x_1,\dots,x_r]^{\otimes_A d}\bigr)^{\SG{d}}.$$
Another, functorially more well-behaved, description of the multisymmetric
functions is given by the ring of divided powers $\Gamma^d_A(A[x_1,\dots,x_r])$
which is canonically isomorphic with the ring of multisymmetric functions.
For any $A$-algebra $B$ there is a canonical homomorphism
$\Gamma^d_A(B)\to \TS^d_A(B)$ which is an
isomorphism when $A$ is of characteristic $0$ or $B$ is a free $A$-module
but not in general~\cite{lundkvist_counter-ex}.
If $B\to C$ is surjective then $\Gamma^d_A(B)\to \Gamma^d_A(C)$ is surjective
but $\TS^d_A(B)\to \TS^d_A(C)$ need not be~\cite{lundkvist_counter-ex}. Thus, a
set of generators for $\Gamma^d_A(A[x_1,\dots,x_r])=\TS^d_A(A[x_1,\dots,x_r])$
will give a set of generators for $\Gamma^d_A(B)$ for any finitely generated
$A$-algebra $B$ but not for $\TS^d_A(B)$ in general. These issues are also
discussed in~\cite{rydh_gammasymchow_inprep}.

From yet another slightly different viewpoint we can describe the
multisymmetric functions as follows: Let $V$ be a vector space over a field
$k$ and $G$ a group acting linearly on $V$. Then $G$ also acts on the dual
space $V^*$ and on the functions on $V$, i.e. the ring $k[V]=\SYM(V^*)$. The
invariants of $G$ are the invariant elements of $k[V]$, i.e. the subring
$k[V]^G$. The set of \emph{vector invariants}~\cite{weyl_invariants} of $G$ is
the invariant subring $k[V^r]^G\subseteq k[V^r]$ where $G$ acts on
$V^r=V\oplus V\oplus\dots\oplus V$ by
$\sigma(v_1,v_2,\dots,v_r)=(\sigma v_1,\sigma v_2,\dots,\sigma v_r)$. The
symmetric functions are the invariants of the symmetric group on $d$ letters
$\SG{d}$ acting by permutations on $V=k^d$. The multisymmetric functions are
the vector invariants of the same action.

Closely related to the question of generators of the ring of multisymmetric
functions is the question of which relations these generators satisfy. In
characteristic 0 the relations between the generators were thoroughly studied
by Junker~\cite{junker_91,junker_93,junker_94} in the nineteenth century. More
recently, Vaccarino gave in~\cite[Thm. 2]{vaccarino_gen_TS} relations for
his set of generators mentioned above. In this article however, we will not
discuss the relations that the generators satisfy.

\vspace{3 mm}
We begin with some notation in~\S\ref{S:notation} and a somewhat technical
combinatorial result, Main Lemma~\pref{ML:min_order}, which will be used in
the proof of the main theorem in the last section. We recall the definition of
polynomial laws in~\S\ref{S:pol_laws}, the algebra of divided powers
$\Gamma_A(M)$ in~\S\ref{S:divided_powers}, and the multiplicative structure of
$\Gamma^d_A(B)$ in~\S\ref{S:mul_div_powers}.
In the rest of the article we will only consider $\Gamma^d_A(B)$ for $B$
free over $A$ and as mentioned above, in this case $\Gamma^d_A(B)$ coincides
with the symmetric tensors $\TS^d_A(B)$,
see~\pref{X:Gamma-TS-as-alg}.
The reader, if inclined so, may replace $\Gamma^d_A(B)$ with $\TS^d_A(B)$,
$\gamma^k_A(x)$ with $x^{\otimes_A k}$, interpret $\times$ as the shuffle
product and forget about divided powers altogether. The important results
of~\S\S\ref{S:divided_powers}-\ref{S:mul_div_powers} used in the sequel are
Formula~\pref{F:Gamma_mult_form} describing the multiplication in
$\Gamma^d_A(B)$ and the surjective
homomorphism $\surjmap{\rho^e_d}{\Gamma^e_A(B)}{\Gamma^d_A(B)}$ for $e\geq d$
defined in paragraph~\pref{X:rho^e_d}. The homomorphism
$\rho^e_d$ allows us to lift relations in $\Gamma^d_A(B)$ to relations
in $\Gamma^e_A(B)$ which will be useful in \S\ref{S:gen_of_Gamma}. We also
use the convenient shorthand notation $\gammaid{k}$ for $\gamma^k_A(1)$ or
$1^{\otimes_A k}$.

In \S\ref{S:multi_symmetric} we establish some notation
and well known facts about the multisymmetric functions
$\Gamma^d_A(A[x_1,x_2,\dots,x_r])=\TS^d_A(A[x_1,x_2,\dots,x_r])$.
In the central section~\S\ref{S:gen_of_Gamma} a \emph{minimal} set
of generators for the ring of multisymmetric functions
$\Gamma^d_A(A[x_1,x_2,\dots,x_r])$ is found in
Theorem~\pref{T:Gamma_generators} for an arbitrary ring $A$. Several
applications of this theorem is then given in~\S\ref{S:applications}.
A sharp bound on the total degree of any generating set is given in
Corollary~\pref{C:sharp_total_degree_bound},
improving~\cite[Thm. 4.6]{fleischmann}. In Corollary~\pref{C:elementary_gen},
the cases when $\Gamma^d_A(A[x_1,x_2,\dots,x_r])$ is generated by the
elementary multisymmetric polynomials is determined as has previously been done
by Briand~\cite[Thm. 1]{briand_elem_multi_gens}.

Finally we briefly discuss the relation between the generators of the ring of
multisymmetric polynomials and the Chow scheme in Remark~\pref{R:Sym-Chow}.

\begin{acknowledgement}[\!\!\!]
I would like to thank D.~Laksov for introducing me to divided powers and
for many suggestions and comments. Thanks also to F.~Vaccarino for reading an
early manuscript and pointing out some references. Finally I thank the referee
for several valuable comments.
\end{acknowledgement}

\end{section}

\begin{section}{Multi-indices, multinomials and a combinatorial result}
\label{S:notation}

\begin{notation}
We let $\N$ be the set of non-negative integers $0,1,2,\dots$.
\end{notation}

\begin{notation}
For a multi-index $\nu\in \N^{(\I)}$ we let
$$((\nu))=\binom{|\nu|}{\nu}=
\frac{\left(\sum_\alpha \nu_{\alpha}\right)!}
{\prod_\alpha \left(\nu_{\alpha}!\right)}$$
be the multinomial coefficient of $\nu$. In particular
$((a,b))=\binom{a+b}{a}$.
\end{notation}

\begin{notation}
For $i\in \I$ we let $\basis{i}\in\N^{(\I)}$ be the multi-index such that
$$\left(\basis{i}\right)_\alpha=\begin{cases}
0 & \quad\text{if $\alpha\neq i$}\\
1 & \quad\text{if $\alpha=i$.}
\end{cases}$$
\end{notation}

\begin{definition}
For $n\in \Q$ and $p$ a prime we let $\ord_p(n)$ be the order of $n$ at
$p$, i.e. $\ord_p(n)$ is defined by
$n=\pm\prod_{p\text{ prime}} p^{\ord_{p}(n)}$.
\end{definition}

\begin{definition}\label{D:Qp}
For $n\in\N$ we let $\Qp{n}(t)=a_{s}t^s+a_{s-1}t^{s-1}+\dots+a_0\in \Z[t]$ be
the polynomial with coefficients $0\leq a_{k}\leq p-1$ such that $\Qp{n}(p)=n$,
i.e. $a_s a_{s-1}\dots a_0$ is the presentation of $n$ in base $p$. If
$\alpha\in \N^n$ then we let $\Qp{\alpha}=\sum_{k=1}^n \Qp{\alpha_k}$.
\end{definition}

\begin{definition}\label{D:comp_pols}
If $P(t),Q(t)\in\Z[t]$ are polynomials then $P>Q$ means that
$P(n)>Q(n)$ for all sufficiently large $n$.
\end{definition}

\begin{lemma}\label{L:Qp_and_multinomials}
Let $n\in \N$ and $\alpha\in \N^{(\I)}$. We have that
\begin{enumerate}
\item $\ord_p(n!)=\frac{1}{p-1}\bigl(n-\Qp{n}(1)\bigr)$.\label{EN:QP_id_2}
\item $\ord_p\;((\alpha))=
     \frac{1}{p-1}\bigl(\Qp{\alpha}(1)-\Qpp{|\alpha|}(1)\bigr)$.
\label{EN:QP_id_3}
\item $\ord_p\;((p^s\alpha))=\ord_p\;((\alpha))$.\label{EN:QP_id_4}
\end{enumerate}
\end{lemma}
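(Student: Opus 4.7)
The plan is to reduce the whole lemma to Legendre's formula plus the elementary observation that $Q_p(n)(1)$ is exactly the digit sum $s_p(n)$ of $n$ in base $p$.

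For part (i), I would invoke the classical formula $\ord_p(n!)=\sum_{i\geq 1}\floor{n/p^i}$. Writing $n=\sum_{k\geq 0}a_k p^k$ with $0\leq a_k\leq p-1$, one has $\floor{n/p^i}=\sum_{k\geq i}a_k p^{k-i}$, so after swapping the order of summation
$$\ord_p(n!)=\sum_{k\geq 0}a_k\bigl(1+p+\dots+p^{k-1}\bigr)=\sum_{k\geq 0}a_k\frac{p^k-1}{p-1}=\frac{n-\sum_k a_k}{p-1}.$$
Since by Definition~\pref{D:Qp} we have $\Qp{n}(1)=\sum_k a_k$, this gives (i). I expect this step to be the main (and only nontrivial) input; the rest is formal.

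For part (ii), the identity $((\alpha))=|\alpha|!\big/\prod_{\alpha}(\alpha_i)!$ gives, on applying $\ord_p$ and using (i) on each factor,
$$\ord_p((\alpha))=\frac{|\alpha|-\Qpp{|\alpha|}(1)}{p-1}-\sum_i\frac{\alpha_i-\Qp{\alpha_i}(1)}{p-1}.$$
The terms $|\alpha|$ and $\sum_i\alpha_i$ cancel, and by the definition $\Qp{\alpha}=\sum_i\Qp{\alpha_i}$ we have $\Qp{\alpha}(1)=\sum_i\Qp{\alpha_i}(1)$, which yields the desired formula.

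For part (iii), the key observation is that multiplying a nonnegative integer by $p^s$ merely shifts its base-$p$ expansion, so $\Qp{p^s m}(t)=t^s\Qp{m}(t)$, and in particular $\Qp{p^s m}(1)=\Qp{m}(1)$. Applying this both to each coordinate of $p^s\alpha$ and to $|p^s\alpha|=p^s|\alpha|$, the right-hand side of (ii) for $p^s\alpha$ coincides with that for $\alpha$, proving (iii).
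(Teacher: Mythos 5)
Your argument is correct and follows the same logical chain the paper sketches: establish (i) via Legendre's formula and the digit-sum interpretation of $\Qp{n}(1)$, derive (ii) by applying (i) to $|\alpha|!$ and each $\alpha_i!$, and obtain (iii) from (ii) using the digit-shift identity $\Qp{p^s m}(1)=\Qp{m}(1)$. The paper leaves all three verifications to the reader, so your write-up simply supplies the standard details.
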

\begin{proof}
\enumref{EN:QP_id_2} is easily verified,
\enumref{EN:QP_id_3} is an immediate consequence of \enumref{EN:QP_id_2} and
\enumref{EN:QP_id_4} follows from \enumref{EN:QP_id_3}.
\end{proof}

\begin{lemma}\label{L:ordp==0_and_Qp}
Let $\alpha\in\N^{(\I)}$. We have three inequalities
$$\ord_p\;((\alpha)) \geq 0 \quad\quad
\Qp{\alpha}(1) \geq\Qpp{|\alpha|}(1) \quad\quad
\Qp{|\alpha|} \geq \Qp{\alpha}$$
and equality in any of these inequalities holds if and only if the sum
$\sum_{i\in\I} \alpha_i$ can be computed in base $p$ without carrying, i.e
if and only if $\Qp{|\alpha|}=\Qp{\alpha}$.
\end{lemma}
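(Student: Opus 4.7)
The plan is to reduce all three inequalities to a single combinatorial identity that tracks the base-$p$ addition $\sum_{i\in\I}\alpha_i$.

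First I would dispose of the first two inequalities. Since $((\alpha))$ is a positive integer its $p$-adic order is non-negative; by Lemma~\pref{L:Qp_and_multinomials}\enumref{EN:QP_id_3} this order equals $\tfrac{1}{p-1}\bigl(\Qp{\alpha}(1)-\Qpp{|\alpha|}(1)\bigr)$, so the first two inequalities hold and are equivalent to each other.

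Next I would establish the key polynomial identity. Write each $\alpha_i$ in base $p$ as $\alpha_i=\sum_j a_{ij}p^j$ with $0\leq a_{ij}\leq p-1$. Then $\Qp{\alpha}(t)=\sum_j c_j t^j$ where $c_j=\sum_i a_{ij}\in\N$ may exceed $p-1$. The polynomial $\Qp{|\alpha|}(t)$ is obtained from $\sum_j c_j t^j$ by the usual carrying procedure: whenever some $c_j\geq p$, replace $c_j$ by $c_j-p$ and increment $c_{j+1}$ by $1$. Each such carry changes the polynomial by $-pt^j+t^{j+1}=t^j(t-p)$. Letting $N$ be the total number of carries and $j_1,\dots,j_N$ their positions (with multiplicity), we obtain
$$\Qp{|\alpha|}(t) \;-\; \Qp{\alpha}(t) \;=\; \sum_{k=1}^{N} t^{j_k}(t-p).$$

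The three conclusions then fall out simultaneously. Evaluating at $t=1$ gives $\Qp{\alpha}(1)-\Qpp{|\alpha|}(1)=N(p-1)$, so by Lemma~\pref{L:Qp_and_multinomials}\enumref{EN:QP_id_3} we have $\ord_p\;((\alpha))=N$; hence the first two inequalities become equalities precisely when $N=0$. For the third inequality, each term $t^{j_k}(t-p)$ is strictly positive for $t>p$, so the right hand side is non-negative for $t$ large with equality (in the sense of Definition~\pref{D:comp_pols}) iff $N=0$, i.e.\ iff $\Qp{|\alpha|}=\Qp{\alpha}$ as polynomials. All three equality cases thus coincide with the condition that no carrying occurs when summing the $\alpha_i$ in base $p$.

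The main potential obstacle is justifying the displayed identity: one must verify that the cumulative polynomial correction produced by all carries, performed in any order, telescopes exactly to $\Qp{|\alpha|}-\Qp{\alpha}$. This is straightforward once one observes that each elementary carry leaves the value at $t=p$ unchanged (so consistency with $\Qp{|\alpha|}(p)=|\alpha|=\Qp{\alpha}(p)$ is automatic) and that the carrying procedure terminates with the unique base-$p$ expansion of $|\alpha|$.
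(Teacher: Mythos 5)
Your proof is correct, and it takes essentially the same approach as the paper's: the paper dismisses the second and third inequalities as ``easily seen'' from the carry analysis and then uses Lemma~\pref{L:Qp_and_multinomials}\enumref{EN:QP_id_3} to tie in the first, exactly as you do. Your explicit carry-tracking identity $\Qp{|\alpha|}(t)-\Qp{\alpha}(t)=\sum_{k=1}^{N}t^{j_k}(t-p)$ is a clean way to make the ``easily seen'' part precise, simultaneously handling the evaluation at $t=1$ (yielding $\ord_p((\alpha))=N$) and the comparison in the sense of Definition~\pref{D:comp_pols} (since each term is positive for $t>p$), and it correctly collapses all three equality cases to $N=0$.
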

\begin{proof}
The first inequality is obvious as $((\alpha))$ is a positive integer.
It is further easily seen that the two last inequalities hold and with
equality if and only if the sum $\sum_{i\in\I} \alpha_i$ can be computed in
base $p$ without carrying. The second inequality is a multiple of the
first by Lemma~\pref{L:Qp_and_multinomials}~\enumref{EN:QP_id_3} and thus
$\ord_p\;((\alpha))=0$ if and only if $\Qp{|\alpha|}=\Qp{\alpha}$.
\end{proof}


\begin{definition}\label{D:calS}
Fix a positive integer $r$ and let $\monpos=\N^r\setminus\{0\}$. We will
identify $\beta\in\monpos$ with the monomial
$x^\beta=x_1^{\beta_1}x_2^{\beta_2}\dots x_r^{\beta_r}$. In particular,
we identify $\basis{i}\in\monpos$ with $x_i$. 
Given a multi-index
$\alpha\in\monpos$ and an integer $d<|\alpha|$ we let $\calS_{\alpha,d}$ be
the set of $\nu\in\N^{(\monpos)}$ such that there is a decomposition
$x^\alpha=\prod_{\beta\in\monpos} {\left(x^\beta\right)}^{\nu_{\beta}}$
and such that $|\nu|>d$. That is, the elements of $\calS_{\alpha,d}$ are
factorizations of $x^\alpha$ in at least $d+1$, not necessarily different,
non-trivial monomials.
\end{definition}

\begin{main_lemma}\label{ML:min_order}
Given a multi-index $\alpha\in\monpos$, an integer $d<|\alpha|$ and a
prime~$p$, we let $s=\ord_p \gcd(\alpha)$. Then there exists a
$\nu\in\calS_{\alpha,d}$ such that
$$\ord_p \frac{p^s((\nu))}{|\nu|}=0$$
if and only if $\Qp{\alpha}>\Qp{d}$.
\end{main_lemma}
\begin{proof}
\def\tnu{\widetilde{\nu}}
First note that $\ord_p\bigl(p^s((\nu))/|\nu|\bigr)$ is always non-negative.
In fact, there exists a $\beta\in\monpos$ such that $\nu_{\beta}>0$ and
$\ord_p \nu_{\beta}\leq s$, and
$((\nu))/|\nu|=((\nu-\basis{\beta}))/\nu_{\beta}$.
We will prove the lemma in several steps:

\vspace{0.3 cm}
\textbf{I)} \emph{Reduction to the case where $s=0$.}
Assume that $s>0$ and let $\nu\in\calS_{\alpha,d}$. If $p$ does not
divide~$\nu$ then define $\nu',\nu'',\tnu\in\N^{(\monpos)}$ by
$$\nu'_{\beta}=\floor{\frac{\nu_{\beta}}{p}} \quad\text{and}\quad
\tnu=p(\nu'+\nu'')
$$
where $\nu''$ is chosen such that
$$|\tnu|=p\ceil{\frac{|\nu|}{p}} \quad\text{and}\quad
\sum_{\beta\in\monpos}\tnu_{\beta}\beta=
\sum_{\beta\in\monpos}\nu_{\beta}\beta=\alpha.
$$
Then $\tnu\in\calS_{\alpha,d}$ since $|\tnu|\geq|\nu|\geq d+1$.
To see that such a $\nu''$ exists, let
$\alpha''=\alpha/p-\sum_{\beta\in\monpos}\nu'_{\beta} \beta$
and $q=\ceil{(|\nu|-p|\nu'|)/p}\leq |\alpha''|$.
Then choose $\beta_1,\beta_2,\dots,\beta_q\in\monpos$
such that $\alpha''=\beta_1+\beta_2+\dots+\beta_q$ and let
$\nu''=\sum_i \basis{\beta_i}$.
Now as $\ord_p \bigl(|\nu|!/|\nu|\bigr)=
\ord_p \bigl(|\tnu|!/|\tnu|\bigr)$ and
$\ord_p(\nu!)=\ord_p\bigl( (p\nu')!\bigr)\leq
\ord_p(\tnu!)$ we obtain that
$$\ord_p \frac{((\nu))}{|\nu|} =
\ord_p \frac{|\nu|!}{|\nu|\nu!}
\geq \ord_p \frac{|\tnu|!}{|\tnu|\tnu!}=
\ord_p \frac{((\tnu))}{|\tnu|}$$
and thus
$$\min_{\nu\in \calS_{\alpha,d}} \ord_p \frac{p^s((\nu))}{|\nu|}
=\min_{p\mu\in \calS_{\alpha,d}} \ord_p \frac{p^s((p\mu))}{|p\mu|}
=\min_{p\mu\in \calS_{\alpha,d}}
\ord_p \frac{p^{s-1}((\mu))}{|\mu|}
$$
by Lemma~\pref{L:Qp_and_multinomials} \enumref{EN:QP_id_4}. Let
$p\mu\in\calS_{\alpha,d}$. If we let $\alpha'=\alpha/p$ and
$d'=\floor{d/p}$ then $\mu\in\calS_{\alpha',d'}$ 
as $p|\mu|\geq d+1$ implies that $|\mu|\geq \ceil{(d+1)/p}=d'+1$. Thus
$$\min_{p\mu\in \calS_{\alpha,d}} \ord_p \frac{p^{s-1}((\mu))}{|\mu|}
=\min_{\mu\in \calS_{\alpha',d'}} \ord_p \frac{p^{s-1}((\mu))}{|\mu|}.
$$

Finally $\Qp{\alpha}>\Qp{d}$ if and only if $\Qp{\alpha'}>\Qp{d'}$ and
we conclude step I) by induction on $s$.

\vspace{0.3 cm}
\textbf{II)} \emph{Reduction to the case where
$\nu=\sum_{i=1}^r \delta_i \basis{x_i}+\basis{x^\gamma}$ with $\delta\in \N^r$
and $\gamma\in \monpos$.}
From step I) we can assume that $p\notdivide \alpha$ and hence
$p\notdivide \nu$. If we choose $\beta_0$ such that
$p\notdivide \nu_{\beta_0}$ then
$$\ord_p\frac{((\nu))}{|\nu|}=\ord_p\; ((\nu-\basis{x^{\beta_0}})).$$
For every $\beta\in\monpos$ choose
$i(\beta)\in\{1,2,\dots,r\}$ such that $\beta_{i(\beta)}>0$ and let
$$\nu'=\sum_{\beta\in\monpos} \nu_{\beta} \basis{x_{i(\beta)}}
-\basis{x_{i(\beta_0)}}.$$
Then $|\nu'|=|\nu|-1$ and
$\ord_p\;((\nu'))\leq \ord_p\;((\nu-\basis{x^{\beta_0}}))$. Finally
if we let $\nu''=\nu'+\basis{x^\gamma}$ with $x^\gamma\in \monpos$ such that
$\sum_{\beta\in\monpos}\nu''_{\beta}\beta=
\sum_{\beta\in\monpos}\nu_{\beta}\beta=\alpha$
then $|\nu''|=|\nu|$ and
$$\ord_p\frac{((\nu''))}{|\nu''|}=
\ord_p\frac{((\nu''-\basis{x^\gamma}))}{\nu''_\gamma}\leq
\ord_p\;((\nu'))\leq \ord_p\frac{((\nu))}{|\nu|}.$$

\vspace{0.3 cm}
Let $\calT_{\alpha,d}=\{\delta\in\N^r\;:\; \delta<\alpha,\; |\delta|\geq d\}$.

\vspace{0.3 cm}
\textbf{III)} \emph{$\ord_p\;((\nu))/|\nu|=\ord_p\;((\delta))$ for
some $\delta\in\calT_{\alpha,d}$.} From
the previous step we can assume that
$\nu=\sum_{i=1}^r \delta_i \basis{x_i}+\basis{x^\gamma}$. If
$|\gamma|\geq 2$, i.e. $x^\gamma\neq x_j$ for some $j$ then
$$\frac{((\nu))}{|\nu|}
=\frac{((\delta_1,\delta_2,\dots,\delta_r,1))}
{\delta_1+\delta_2+\dots+\delta_r+1}
=((\delta)).$$
Otherwise $x^\gamma=x_j$ and $\nu=\sum_{i=1}^r \alpha_i \basis{x_i}$.
As $p\notdivide \alpha$ by step I), there is an $k$ such that
$p\notdivide \alpha_k$ and we can write
$\nu=\sum_{i=1}^r \delta'_i \basis{x_i}+\basis{x_k}$
where $\delta'=\alpha-\basis{k}$. Then
$$\ord_p \frac{((\nu))}{|\nu|}
=\ord_p\; ((\nu-\basis{x_k}))
=\ord_p\; ((\delta')).$$

\vspace{0.3 cm}
\textbf{IV)} \emph{$\min_{\nu\in\calS_{\alpha,d}} \ord_p\;((\nu))/|\nu|=
\min_{\delta\in\calT_{\alpha,d}} \ord_p\;((\delta))$.} From step III) it
follows that
$$\min_{\nu\in\calS_{\alpha,d}} \ord_p\frac{((\nu))}{|\nu|}\geq
\min_{\delta\in\calT_{\alpha,d}} \ord_p\;((\delta)).$$
For any $\delta\in\calT_{\alpha,d}$ we let $\gamma=\alpha-\delta$ and
$\nu=\sum_i \delta_i \basis{x_i}+\basis{x^\gamma}\in\calS_{\alpha,d}$. As
$\ord_p\; ((\nu))/|\nu|=\ord_p\;((\delta))-\ord_p(\nu_{\gamma})
\leq \ord_p\;((\delta))$ this concludes step IV).

\vspace{0.3 cm}
\textbf{V)} \emph{The minimum of $\bigl\{\ord_p\;((\delta))\bigr\}_{\delta\in\calT_{\alpha,d}}$ is attained for a $\delta$ with $|\delta|=d$.}
This follows immediately from the fact that
$\ord_p\;((\delta))\geq\ord_p\;((\delta-\basis{i}))$ if $i$ is chosen such
that $\ord_p(\delta_i)$ is minimal.

\textbf{VI)} \emph{Conclusion.} Let $\delta\in\N^r$ such that
$|\delta|=d$ and $\delta<\alpha$. Lemma~\pref{L:ordp==0_and_Qp} shows that
$\ord_p\;((\delta))=0$ if and only if $\Qp{\delta}=\Qpp{|\delta|}=\Qp{d}$. It
is then easily seen that
there exists a $\delta<\alpha$ such that $\Qp{\delta}=\Qp{d}$
if and only if $\Qp{\alpha}>\Qp{d}$.
\end{proof}

\end{section}


\begin{section}{Polynomial laws and symmetric tensors}\label{S:pol_laws}

We recall the definition of a polynomial
law~\cite{roby_lois_pol, roby_lois_pol_mult}.

\begin{definition}
Let $M$ and $N$ be $A$-modules. We denote by $\fF_M$ the functor
$$\map{\fF_M}{A\xAlg}{\Sets},\quad\quad A'\mapsto M\otimes_A A'.$$
A \emph{polynomial law} from $M$ to $N$ is a natural transformation
$\map{f}{\fF_M}{\fF_N}$. More concretely, a polynomial law is a \emph{map}
$\map{f_{A'}}{{M\otimes_A A'}}{{N\otimes_A A'}}$ for every
$A$-algebra $A'$ such that for any homomorphism of $A$-algebras
${\map{g}{A'}{A''}}$ the diagram
$$\xymatrix{
{M\otimes_A A'}\ar[r]^{f_{A'}}\ar[d]_{\id{M}\otimes g} & 
   {N\otimes_A A'}\ar[d]^{\id{N}\otimes g} \\
{M\otimes_A A''}\ar[r]^{f_{A''}} & {N\otimes_A A''}\ar@{}[ul]|{\circ}
}$$
commutes.
The polynomial law $f$ is
\emph{homogeneous of degree} $d$ if for any $A$-algebra $A'$, the corresponding
map $\map{f_{A'}}{M\otimes_A A'}{N\otimes_A A'}$ is such that
$f_{A'}(ax)=a^d f_{A'}(x)$ for any $a\in A'$ and $x\in M\otimes_A A'$. If $B$
and $C$ are $A$-algebras then a polynomial law from $B$ to $C$ is
\emph{multiplicative} if for any $A$-algebra $A'$, the corresponding map
$\map{f_{A'}}{B\otimes_A A'}{C\otimes_A A'}$ is such that
$f_{A'}(xy)=f_{A'}(x)f_{A'}(y)$ for any $x,y\in B\otimes_A A'$.
\end{definition}

\begin{notation}
Let $A$ be a ring and $M$ and $N$ be $A$-modules (resp. $A$-algebras).
We let $\Hompl{d}(M,N)$ (resp. $\Hommpl{d}(M,N)$) denote the polynomial laws
(resp. multiplicative polynomial laws) $M\to N$ which are homogeneous of
degree $d$.
\end{notation}

\begin{notation}
Let $A$ be a ring and $M$ an $A$-algebra. We denote the $d$\textsuperscript{th}
tensor product of $M$ over $A$ by $\T^d_A(M)$. We have an action of the
symmetric group $\SG{d}$ on $\T^d_A(M)$ permuting the factors. The invariant
ring of this action is the set of symmetric tensors and is denoted
$\TS^d_A(M)$. By $\T_A(M)$ and $\TS_A(M)$ we denote the graded $A$-modules
$\bigoplus_{d\geq 0} \T^d_A(M)$ and $\bigoplus_{d\geq 0}\TS^d_A(M)$
respectively.
\end{notation}

\begin{xpar}[Shuffle product]\label{X:shuffle_product}
When $B$ is an $A$-algebra, then $\TS^d_A(B)$ has a natural $A$-algebra
structure induced from the $A$-algebra structure of $\T^d_A(B)$. The
multiplication on $\TS^d_A(B)$ will be written as juxtaposition. For any
$A$-module $M$, we can equip $\T_A(M)$ and $\TS_A(M)$ with $A$-algebra
structures compatible with the grading. The multiplication on $\T_A(M)$ is the
ordinary tensor product and the multiplication on $\TS_A(M)$ is called the
\emph{shuffle product} and is denoted by $\times$. If $x\in\TS^d_A(M)$ and
$y\in\TS^e_A(M)$ then
$$x\times y=\sum_{\sigma\in\SG{d,e}} \sigma\left(x\otimes_A y\right)$$
where $\SG{d,e}$ is the subset of $\SG{d+e}$ such that
$\sigma(1)<\sigma(2)<\dots<\sigma(d)$ and
$\sigma(d+1)<\sigma(d+2)<\dots\sigma(d+e)$.
\end{xpar}
\end{section}

\vspace{-5 mm} 
\begin{section}{Divided powers}\label{S:divided_powers}
All of the material in this section can be found in~\cite{roby_lois_pol}
and a good exposition is~\cite[\S2]{ferrand_norme}.

\begin{xpar}
Let $A$ be a ring and $M$ an $A$-module. Then there exists a graded
$A$-algebra, the algebra of divided powers, denoted
$\Gamma_A(M)=\bigoplus_{d\geq 0}\Gamma^d_A(M)$ equipped with maps
$\map{\gamma^d}{M}{\Gamma^d_A(M)}$ such that, denoting the multiplication with
$\times$ as in~\cite{ferrand_norme}, we have that for every $x,y\in M$,
$a\in A$ and $d,e\in\N$
\begin{align}
\Gamma^0_A(M) & = A,\quad\text{and}\quad \gamma^0(x)=1 \label{gamma0}\\
\Gamma^1_A(M) & = M,\quad\text{and}\quad \gamma^1(x)=x \label{gamma1}\\
\gamma^d(ax) & = a^d\gamma^d(x) \\
\gamma^d(x+y) & = \textstyle\sum_{d_1+d_2=d} \gamma^{d_1}(x)\times \gamma^{d_2}(y) \\
\gamma^d(x)\times\gamma^e(x) & = ((d,e))\gamma^{d+e}(x).
\end{align}
Using~\eqref{gamma0} and~\eqref{gamma1} we will identify $A$ with
$\Gamma^0_A(M)$ and $M$ with $\Gamma^1_A(M)$.
If $(x_\alpha)_{\alpha\in\I}$ is a family of elements of $M$ and
$\nu\in\N^{(\I)}$ then we let
$$\gamma^\nu (x) = \bigtimes_{\alpha\in \I} \gamma^{\nu_{\alpha}} (x_\alpha)$$
which is an element of $\Gamma^d_A(M)$ with
$d=|\nu|=\sum_{\alpha\in\I} \nu_{\alpha}$.
\end{xpar}

\begin{xpar}[Functoriality]
$\Gamma_A(\cdot)$ is a covariant functor from the category of $A$-modules to
the category of graded $A$-algebras~\cite[Ch. III \S4, p. 251]{roby_lois_pol}.
\end{xpar}

\begin{xpar}[Base change]
For any $A$-algebra $A'$ there is a natural isomorphism
$\Gamma_{A'}(M\otimes_A A')\to \Gamma_A(M)\otimes_A A'$ taking
${\gamma^d(x\otimes_A 1)}$ to ${\gamma^d(x)\otimes_A 1}$~%
\cite[Thm. III.3, p. 262]{roby_lois_pol}.
This shows that $\gamma^d$ is a homogeneous polynomial law of degree $d$.
\end{xpar}

\begin{xpar}[Universal property]\label{X:univ_prop_of_Gamma}
The map $\Hom_A\bigl(\Gamma^d_A(M),N\bigr)\to\Hompl{d}(M,N)$ given by
$f\mapsto f\circ\gamma^d$ is an isomorphism~%
\cite[Thm. IV.1, p. 266]{roby_lois_pol}.
\end{xpar}

\begin{xpar}[Basis]\label{X:Gamma_basis}
If $(x_\alpha)_{\alpha\in\I}$ is a family of elements of $M$ which generates
$M$, then the family
$\bigl(\gamma^\nu(x)\bigr)_{\nu\in\N^{(\I)}}$ generates $\Gamma_A(M)$. If
$(x_\alpha)_{\alpha\in\I}$ is a basis of $M$
then $\bigl(\gamma^\nu(x)\bigr)_{\nu\in\N^{(\I)}}$ is a basis of
$\Gamma_A(M)$~\cite[Thm. IV.2, p. 272]{roby_lois_pol}.
\end{xpar}

\begin{xpar}[Exactness]\label{X:Gamma_exactness}
The functor $\Gamma_A(\cdot)$ is a left
adjoint~\cite[Thm. III.1, p. 257]{roby_lois_pol} and thus commutes with any
(small) direct limit. It is thus
\emph{right exact}~\cite[Def. 2.4.1]{sga4_grothendieck_prefaisceaux} but note
that $\Gamma_A(\cdot)$ is a functor from the category of $A$-modules to the
category of graded $A$-algebras and that the latter category is not abelian.
By~\cite[Rem. 2.4.2]{sga4_grothendieck_prefaisceaux} a functor is right
exact if and only if it takes the initial object onto the initial object and
commutes with finite coproducts and coequalizers. Thus
$\Gamma_A(0)=A$ and given an exact diagram of $A$-modules
$$\xymatrix{
{M'} \ar@<.5ex>[r]^f \ar@<-.5ex>[r]_g & {M} \ar[r]^h & {M''}
}$$
the diagram
$$\xymatrix{
{\Gamma_A(M')}\ar@<.5ex>[r]^{\Gamma f} \ar@<-.5ex>[r]_{\Gamma g} &
{\Gamma_A(M)}\ar[r]^{\Gamma h} & {\Gamma_A(M'')}
}$$
is an exact sequence of $A$-algebras and
$$\Gamma_A(M\oplus M')=\Gamma_A(M)\otimes_A \Gamma_A(M').$$
The latter identification can be made
explicit~\cite[Thm. III.4, p. 262]{roby_lois_pol} as
\begin{align*}
\Gamma^d_A(M\oplus M') &= \bigoplus_{a+b=d}
     \left(\Gamma^a_A(M)\otimes_A \Gamma^b_A(M')\right) \\
\gamma^d(x+y) &= \sum_{a+b=d}\gamma^a(x)\otimes\gamma^b(y).
\end{align*}
This makes $\Gamma_A(M\oplus M')=
\bigoplus_{a,b\geq 0}
\Gamma^{a,b}_A(M\oplus M')$ into a bigraded algebra where
$\Gamma^{a,b}_A(M\oplus M')=\Gamma^a_A(M)\otimes_A\Gamma^b_A(M')$.
\end{xpar}

\begin{xpar}[Exactness of $\Gamma^d_A(\cdot)$]
If $M\surj N$ is a surjection then it is easily seen
from the explicit generators of $\Gamma^d(N)$ in~\pref{X:Gamma_basis} that
$\Gamma^d_A(M)\surj\Gamma^d_A(N)$ is surjective.
This does, however, not imply that $\Gamma^d_A(\cdot)$ is
right exact. In fact, in general it is not since we have that
$\Gamma^d_A(M\oplus M')\neq \Gamma^d_A(M)\oplus\Gamma^d_A(M')$.
\end{xpar}

\begin{xpar}[Filtered direct limits]\label{X:Gamma^d_filt_dir_lims}
The functor $\Gamma^d_A(\cdot)$ commutes with \emph{filtered} direct limits.
In fact, if $(M_\alpha)$ is a directed filtered system of $A$-modules then
\begin{align*}
\bigoplus_{d\geq 0} \Gamma^d_A
  (\varinjlim_{\!\!\scriptscriptstyle A\xMod\!\!} M_\alpha)
&= \varinjlim_{\scriptscriptstyle A\xAlg}
  \bigoplus_{d\geq 0}\Gamma^d_A(M_\alpha) = \\
&= \varinjlim_{\scriptscriptstyle A\xMod}
  \bigoplus_{d\geq 0}\Gamma^d_A(M_\alpha) =
\bigoplus_{d\geq 0}\varinjlim_{\scriptscriptstyle A\xMod}\Gamma^d_A(M_\alpha).
\end{align*}
The first equality follows from the exactness of $\Gamma$ described in
paragraph~\pref{X:Gamma_exactness} and the second
from the fact that a filtered direct limit in the category of $A$-algebras
coincides with the corresponding filtered direct limit in the category of
$A$-modules~\cite[Cor. 2.9]{sga4_grothendieck_prefaisceaux}.
\end{xpar}

\begin{xpar}[$\Gamma$ and $\TS$]\label{X:Gamma-TS-as-mod}
The homogeneous polynomial law $M\to\TS^d_A(M)$ of degree $d$ given by
$x\mapsto x^{\otimes_A d} = x\otimes_A \dots\otimes_A x$ corresponds by the
universal property~\pref{X:univ_prop_of_Gamma} to an $A$-module homomorphism
$\Gamma^d_A(M)\to \TS^d_A(M)$. This extends to an $A$-algebra homomorphism
$\Gamma_A(M)\to \TS_A(M)$, where the multiplication in $\TS_A(M)$ is the
shuffle product defined in paragraph~\pref{X:shuffle_product},
cf.~\cite[Prop. III.1, p.254]{roby_lois_pol}.

When $M$ is a free $A$-module the homomorphisms $\Gamma^d_A(M)\to \TS^d_A(M)$
and $\Gamma_A(M)\to \TS_A(M)$ are isomorphisms of $A$-modules
respectively $A$-algebras~\cite[Prop. IV.5, p. 272]{roby_lois_pol}.
More generally, these homomorphisms are isomorphisms when $M$ is a \emph{flat}
$A$-module, see~\cite[5.5.2.5, p. 123]{sga4_deligne_coh_supp_prop}, or when $A$
is a $\Q$-algebra, see~\cite[Ch. III, Cor., p. 256]{roby_lois_pol}. This is
also discussed in~\cite{rydh_gammasymchow_inprep}.
\end{xpar}

\begin{xpar}\label{X:splitting_hom_of_gamma_d_1+d_2}
Let $d_1,d_2\in \N$. There is a canonical homomorphism
$$\map{\rho^{d_1+d_2}_{d_1,d_2}}{\Gamma^{d_1+d_2}_A(M)}
{\Gamma^{d_1}_A(M)\otimes_A\Gamma^{d_2}_A(M)}$$
given by the homogeneous polynomial law
${x\mapsto\gamma^{d_1}(x)\otimes\gamma^{d_2}(x)}$
of degree $d_1+d_2$ and the universal property~\pref{X:univ_prop_of_Gamma}.
In particular
$$\rho^{d_1+d_2}_{d_1,d_2}\bigl(\gamma^{\nu}(x)\bigr)=
\sum_{\substack{\nu^{(1)}+\nu^{(2)}=\nu\\\left|\nu^{(i)}\right|=d_i}}
\gamma^{\nu^{(1)}}(x)\otimes \gamma^{\nu^{(2)}}(x).$$
\end{xpar}
\end{section}

\begin{section}{Multiplicative structure of $\Gamma^d_A(B)$}\label{S:mul_div_powers}
When $B$ is a not necessarily commutative and unitary $A$-algebra then the
multiplication of $B$ induces a multiplication on $\Gamma^d_A(B)$ which we will
denote by juxtaposition~\cite[(II)]{roby_lois_pol_mult}. In particular
$\gamma^d(x)\gamma^d(y)=\gamma^d(xy)$ and this makes
$\gamma^d$ into a multiplicative polynomial law homogeneous of degree~$d$.
Unless otherwise stated we will assume that $B$ is a (commutative and unitary)
$A$-algebra. The unit in $\Gamma^d_A(B)$ is $\gamma^d(1)$ and will be denoted
by $\gammaid{d}$ in the sequel.

\begin{xpar}[Universal property]
Let $B$ and $C$ be $A$-algebras. Then the map
$\Hom_{A\xAlg}\bigl(\Gamma^d_A(B),C\bigr)\to\Hommpl{d}(B,C)$ given by
$f\to f\circ\gamma^d$ is an isomorphism~\cite[Thm.]{roby_lois_pol_mult}. Also
see~\cite[Prop. 2.5.1]{ferrand_norme}.
\end{xpar}

\begin{xpar}[$\Gamma$ and $\TS$]\label{X:Gamma-TS-as-alg}
The homogeneous polynomial law $B\to\TS^d_A(B)$ of degree $d$ given by
$x\mapsto x^{\otimes_A d} = x\otimes_A \dots\otimes_A x$ is multiplicative.
The homomorphism $\map{\varphi}{\Gamma^d_A(B)}{\TS^d_A(B)}$
in paragraph~\pref{X:Gamma-TS-as-mod} is thus an $A$-algebra homomorphism. It
is an isomorphism when $B$ is a free $A$-module, or more generally when $B$ is
flat over $A$, since it is then an isomorphism of $A$-modules
by~\pref{X:Gamma-TS-as-mod}.

This $A$-algebra homomorphism is studied in~\cite{rydh_gammasymchow_inprep} and
need neither be injective nor surjective.
In particular it is shown that if $x\in \ker \varphi$ then $x^{d!}=0$ and if
$y\in \TS^d_A(B)$ then $y^{d!}\in \image \varphi$. Further, the corresponding
morphism of schemes
$$\Sym^d_{\Spec(A)}\bigl(\Spec(B)\bigr):=
\Spec\bigl(\TS^d_A(B)\bigr)\to\Spec\bigl(\Gamma^d_A(B)\bigr)$$
is a universal homeomorphism. An example due to
Lundkvist~\cite{lundkvist_counter-ex} shows that
$\bigl(\Gamma^d_A(B)\bigr)_{\red}\to \bigl(\TS^d_A(B)\bigr)_{\red}$ is not
always an isomorphism. The induced morphism between
\emph{seminormalizations} $\bigl(\Gamma^d_A(B)\bigr)_{\mathrm{sn}}\to
\bigl(\TS^d_A(B)\bigr)_{\mathrm{sn}}$, however, is an isomorphism as shown
in~\cite{rydh_gammasymchow_inprep}.
\end{xpar}

\begin{formula}[Multiplication formula {\cite[Eq. (3)]{roby_lois_pol_mult}}]
\label{F:Gamma_mult_form}
Let $(x_\alpha)_{\alpha\in\I}$ be a family of elements in $B$ and let
$\mu,\nu\in \N^{(\I)}$ with $d=|\mu|=|\nu|$. Then we have the following
identity in $\Gamma^d_A(B)$
$$\gamma^\mu(x)\gamma^\nu(x)=
\sum_{\xi\in N_{\mu,\nu}} \gamma^\xi(x_{(1)}x_{(2)})=
\sum_{\xi\in N_{\mu,\nu}} \bigtimes_{(\alpha,\beta)\in\I\times\I}
       \gamma^{\xi_{\alpha\beta}}(x_\alpha x_\beta)
$$
where $N_{\mu,\nu}$ is the set of multi-indices $\xi\in \N^{(\I\times\I)}$
such that $\sum_{\beta\in\I} \xi_{\alpha\beta}=\mu_{\alpha}$ for every
$\alpha\in \I$ and $\sum_{\alpha\in\I} \xi_{\alpha\beta}=\nu_{\beta}$ for
every $\beta\in\I$.
\end{formula}

\begin{xpar}\label{X:mult_splitting_hom_of_gamma_d_1+d_2}
The homomorphism $\rho^{d_1+d_2}_{d_1,d_2}$
of~\pref{X:splitting_hom_of_gamma_d_1+d_2} was given by the homogeneous
polynomial law $B\to \Gamma^{d_1}_A(B)\otimes_A\Gamma^{d_2}_A(B)$ defined by
${x\mapsto\gamma^{d_1}(x)\otimes\gamma^{d_2}(x)}$. As this is a
multiplicative law we get a homomorphism of $A$-algebras
$$\map{\rho^{d_1+d_2}_{d_1,d_2}}{\Gamma^{d_1+d_2}_A(B)}
{\Gamma^{d_1}_A(B)\otimes_A\Gamma^{d_2}_A(B)}.$$
Further, given an $A$-algebra retraction $\map{\epsilon}{B}{A}$ of the
structure homomorphism $A\to B$ we get a homomorphism
$$\Gamma^{d_1+d_2}_A(B)\to
\Gamma^{d_1}_A(B)\otimes_A\Gamma^{d_2}_A(B)\to
\Gamma^{d_1}_A(B)\otimes_A\Gamma^{d_2}_A(A)\iso \Gamma^{d_1}_A(B)$$
which we will denote $\rho^{d_1+d_2}_{d_1,\epsilon}$.
\end{xpar}

\begin{xpar}\label{X:rho^e_d}
If $B=\bigoplus_{k\geq 0} B_k$ is a graded $A$-algebra with $B_0=A$ then we
have a canonical augmentation $B\to B_0=A$. The corresponding homomorphism
$\Gamma^{d_1+d_2}_A(B)\to \Gamma^{d_1}_A(B)$ given
by~\pref{X:mult_splitting_hom_of_gamma_d_1+d_2} will be denoted
$\rho^{d_1+d_2}_{d_1}$. If $\left(x_{\alpha}\right)_{\alpha\in \I}$ is a
family of generators of $B_+=\bigoplus_{k\geq 1} B_k$ as an $A$-module, then
by~\pref{X:Gamma_basis} the family $\bigl(\gamma^\nu(x)\times
\gammaid{d-|\nu|}\bigr)_{\nu\in\N^{(\I)}, |\nu|\leq d}$ generates
$\Gamma^d_A(B)$. If $e\geq d$
$$\rho^e_d\bigl(\gamma^{\nu}(x)\times\gammaid{e-|\nu|}\bigr)=
\begin{cases}
\gamma^{\nu}(x)\times\gammaid{d-|\nu|} & \text{if $|\nu|\leq d$}\\
0 & \text{if $|\nu|> d$}.
\end{cases}$$
Thus $\rho^e_d$ is surjective.
\end{xpar}

\begin{remark}[Geometrical interpretation of $\rho$]
If $A=k$ is an algebraically closed field, then the $k$-points of
$\Spec\bigl(\Gamma^d_k(B)\bigr)=\Sym^d_k\bigl(\Spec(B)\bigr)$ correspond to the
zero-cycles of degree $d$ on $\Spec(B)$. Similarly, for any reduced $A'$ it is
possible to describe the $A'$-points of $\Spec\bigl(\Gamma^d_A(B)\bigr)$ as
families of zero-cycles of degree $d$ on $\Spec(B)$ parameterized
by $\Spec(A')$~\cite{rydh_representability_of_gamma_inprep}. The homomorphism
$\rho^{d_1+d_2}_{d_1,d_2}$ defined
in~\pref{X:mult_splitting_hom_of_gamma_d_1+d_2} corresponds to a morphism of
schemes
$$\Spec\bigl(\Gamma^{d_1}_A(B)\bigr)\times_A\Spec\bigl(\Gamma^{d_2}_A(B)\bigr)
\rightarrow \Spec\bigl(\Gamma^{d_1+d_2}_A(B)\bigr)$$
describable as the addition of two families of cycles of degrees
$d_1$ and $d_2$ respectively.

A retraction $\map{\epsilon}{\Gamma^1_A(B)=B}{A}$ gives a family of zero-cycles
of degree~$1$. The homomorphism $\rho^e_{d,\epsilon}$ corresponds to the
addition of a family of cycles of degree $d$ with $e-d$ times the family of
cycles corresponding to $\epsilon$. When $B=A[x_1,x_2,\dots,x_n]$ with its
natural grading, then the canonical retraction on the zeroth graded part
corresponds to the constant family of zero-cycles which is the origin in every
fiber. The homomorphism $\rho^e_d$ is the addition of a family of cycles
of degree $d$ with $e-d$ times this constant family.
\end{remark}

\end{section}

\begin{section}{Elementary symmetric polynomials and power sums}
\label{S:multi_symmetric}

\begin{xpar}
If $B=A[x]$ is the polynomial ring in one variable, then
$\Gamma^d_A(B)\iso\TS^d_A(B)$ is isomorphic to the ring of symmetric
polynomials in $d$ variables $A[x_1,x_2,\dots,x_d]^{\SG{d}}$. It is
well known that the
ring of symmetric polynomials in $d$ variables is the polynomial ring over
$A$ in the elementary symmetric functions $e_1,e_2,\dots,e_d$. The elementary
symmetric functions correspond to the elements
$e_k=\gamma^k(x)\times \gammaid{d-k}$ in $\Gamma^d_A(B)$. We also let
$e_0=1=\gammaid{d}$ and $e_k=0$ for all $k>d$.
\end{xpar}

\begin{xpar}
We have another distinguished set of symmetric polynomials in $d$ variables,
the power sums. Let $p_k=x_1^k+x_2^k+\dots+x_d^k$ for $k\in \N$. This
corresponds to the element $p_k=x^k\times \gammaid{d-1}$. Note that
$$p_0=\gammaid{1}\times \gammaid{d-1}=((1,d-1))\gammaid{d}=d.$$
Expressed in $\Gamma^d_A(B)$, the Newton identities relating $e_k$ and $p_k$
become
\begin{equation}\label{E:Newton_identities}
\sum_{\substack{a+b=k\\a>0, b\geq 0}}(-1)^b p_a e_b + (-1)^k k e_k = 0,
\quad k=1,2,\dots .
\end{equation}
By induction these give the Waring formula, expressing $p_k$ as a polynomial
in $e_1,e_2,\dots,e_k$. Conversely, if $k!$ is invertible we obtain a formula
expressing $e_k$ in $p_1,p_2,\dots,p_k$. Thus if $A$ is purely of
characteristic $0$, i.e. a $\Q$-algebra, then
$\Gamma^d_A(B)=A[e_1,e_2,\dots,e_d]=A[p_1,p_2,\dots,p_d]$.
\end{xpar}

\begin{xpar}
Fix an integer $r\geq 1$ once and for all and let $B=A[x_1,x_2,\dots,x_r]$ be
the polynomial ring in $r$ variables. The rest of the paper will be devoted
to the study of the ring of multisymmetric polynomials
$\Gamma^d_A(B)=\TS^d_A(B)$.
\end{xpar}

\begin{xpar}
The analogues of the elementary
symmetric polynomials are the elementary multisymmetric polynomials $e_\alpha$
given by
$$e_\alpha=\gamma^\alpha(x)\times\gammaid{d-|\alpha|}=
\left(\bigtimes_{i=1}^r \gamma^{\alpha_i}(x_i)\right) \times
\gammaid{d-|\alpha|}$$
for $\alpha\in\N^r$ such that $|\alpha|\leq d$ and $e_\alpha=0$ otherwise. The
elementary multisymmetric polynomials which only depend on one set of
variables, i.e.
such that $\alpha_i=k$ for some $i$ and zero otherwise with $1\leq k\leq d$,
are called \emph{primitive}.
\end{xpar}

\begin{xpar}
Similarly as in the $1$-dimensional case, we define the multisymmetric power
sum $p_\alpha$ as
$$p_\alpha=x^\alpha\times\gammaid{d-1}=
\left(\prod_{i=1}^r x_i^{\alpha_i}\right)\times\gammaid{d-1}$$
with $\alpha\in\N^r$. As before, the $p_\alpha$'s with $\alpha_i=k$ for
some $i$ and zero otherwise with $1\leq k\leq d$, are called \emph{primitive}.
\end{xpar}

\begin{xpar}
When $r>1$ it is no longer true that $\Gamma^d_A(B)$ is a polynomial ring. It
is however easily seen that $\Gamma^d_A(B)=\TS^d_A(B)$ has relative
dimension~$rd$ over $A$, i.e. $\Sym^d_{\Spec(A)}\bigl(\Spec(B)\bigr):=
\Spec\bigl(\TS^d_A(B)\bigr)$ is equidimensional of relative dimension~$rd$
over $\Spec(A)$, cf.~\cite[Def. 13.3.2, Err. 35]{egaIV}. In fact
$\TS^d_A(B)\inj \T^d_A(B)$ is finite and the latter ring has relative
dimension~$rd$. A transcendence basis for $\Gamma^d_A(B)$ over $A$ is given
either by the primitive elementary multisymmetric functions or the primitive
multisymmetric power sums.

It is well known and often (falsely) attributed to Weyl~\cite{weyl_invariants}
that when $A$ is purely of characteristic $0$ then $\Gamma^d_A(B)$ is generated
by the $p_\alpha$'s (or the $e_\alpha$'s) with ${|\alpha|\leq d}$. This result
will also follow from Theorem~\pref{T:Gamma_generators} which gives generators
for $\Gamma^d_A(B)$ for arbitrary $A$,
cf.~Corollary~\pref{C:generators_in_char_0}. For a brief outline of the
classical proofs, see paragraph~\pref{X:gen_TS_in_char_0}.
\end{xpar}

The Newton identities have a generalization to the multisymmetric case which
has long been known, cf.~\cite[\S 4]{junker_93}. Recall that $e_\alpha=0$ if
$|\alpha|>d$.

\begin{proposition}[Multisymmetric Newton identities]\label{P:multi_symm_newton_ids}
Let $\delta\in\N^r$ then
$$\sum_{\substack{\alpha+\beta=\delta\\
                  \alpha\in\N^r\setminus 0,\; \beta\in\N^r}}
(-1)^{|\beta|} ((\alpha))
p_\alpha e_\beta + (-1)^{|\delta|} |\delta| e_\delta=0.$$
\end{proposition}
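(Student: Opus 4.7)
I would prove this via a generating-function argument, reducing to characteristic zero by universality. Each term in the identity is the image of a universal integer combination of standard elements in $\Gamma^d_\Z(\Z[x_1,\ldots,x_r])$ under the base-change isomorphism~\pref{X:Gamma-TS-as-alg}, so it suffices to verify the identity in the universal ring. By~\pref{X:Gamma_basis} the latter is free as a $\Z$-module, hence embeds into $\Gamma^d_\Q(\Q[x_1,\ldots,x_r])$, and we may assume $A=\Q$.

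\textbf{Setup and logarithmic derivative.} Introducing formal variables $t_1,\ldots,t_r$ and working in $\Gamma^d_A(B)[t_1,\ldots,t_r]\iso\Gamma^d_{A[t]}\bigl(B[t]\bigr)$, consider
$$F(t):=\gamma^d\bigl(1-t_1 x_1-\cdots-t_r x_r\bigr).$$
Iterated use of $\gamma^d(u+v)=\sum_{d_1+d_2=d}\gamma^{d_1}(u)\times\gamma^{d_2}(v)$ together with $\gamma^{d'}(-t_j x_j)=(-t_j)^{d'}\gamma^{d'}(x_j)$ yields $F(t)=\sum_{\beta\in\N^r,\,|\beta|\leq d}(-1)^{|\beta|}e_\beta\,t^\beta$. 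Under the identification $\Gamma^d_A(B)\iso\TS^d_A(B)\subset B^{\otimes_A d}$, the element $F(t)$ corresponds to the symmetric tensor $\bigl(1-\sum_k t_k x_k\bigr)^{\otimes d}$, which factors in $B^{\otimes_A d}[t]$ as $\prod_{i=1}^d\bigl(1-\sum_k t_k x_k^{(i)}\bigr)$, where $x_k^{(i)}$ denotes $x_k$ placed in the $i$-th tensor slot. Since the $d$ factors lie in pairwise commuting subrings, taking logarithms in $\TS^d_A(B)[[t_1,\ldots,t_r]]$ gives
$$\log F(t)=\sum_{i=1}^d\log\Bigl(1-\sum_{k=1}^r t_k x_k^{(i)}\Bigr)=-\sum_{\alpha\in\monpos}\frac{((\alpha))}{|\alpha|}\,t^\alpha\sum_{i=1}^d(x^\alpha)^{(i)}=-\sum_{\alpha\in\monpos}\frac{((\alpha))}{|\alpha|}\,p_\alpha\,t^\alpha,$$
where the final equality uses $\sum_i(x^\alpha)^{(i)}=x^\alpha\times\gammaid{d-1}=p_\alpha$.

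\textbf{Extracting the identity.} Applying the tautology $\partial_{t_k}F=F\cdot\partial_{t_k}\log F$ and reading off the coefficient of $t^{\delta-\basis{k}}$ on both sides gives
$$(-1)^{|\delta|}\delta_k\,e_\delta=-\sum_{\substack{\alpha+\beta=\delta\\\alpha\in\monpos}}(-1)^{|\beta|}\,\frac{((\alpha))\alpha_k}{|\alpha|}\,p_\alpha\,e_\beta,$$
with the convention that both sides vanish when $\delta_k=0$. Summing over $k=1,\ldots,r$ and using $\sum_k\alpha_k=|\alpha|$ produces exactly the claimed identity; when $|\delta|>d$ the polynomial $F$ has total $t$-degree $\leq d$, so both sides vanish, consistent with $e_\delta=0$ and $e_\beta=0$ for $|\beta|>d$. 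The only subtle point is the descent to $A=\Q$ by universality and $\Z$-freeness; the generating-function manipulation itself is a routine formal computation.
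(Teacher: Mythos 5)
Your proof is correct, but it takes a genuinely different route from the paper's. The paper proves the multisymmetric Newton identities by \emph{polarization}: working over $A'=A[u_1,\dots,u_r]$, it substitutes $t\mapsto u_1x_1+\cdots+u_rx_r$ into the univariate Newton identities~\eqref{E:Newton_identities} in $\Gamma^d_{A'}(A'[t])$ and extracts the coefficient of $u^\delta$. That argument works directly over any ring $A$ because it is a substitution into a known integer polynomial identity. You instead derive the multivariate identity from scratch via a generating function $F(t)=\gamma^d(1-\sum_k t_kx_k)$ and the logarithmic derivative $\partial_{t_k}F=F\cdot\partial_{t_k}\log F$; this requires a preliminary reduction to $A=\Q$ (since $\log$ and the coefficients $((\alpha))/|\alpha|$ only make sense rationally), which you correctly justify via $\Z$-freeness of $\Gamma^d_\Z(\Z[x_1,\dots,x_r])$ and base change. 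Your computation — expanding $F$ into $\sum_\beta(-1)^{|\beta|}e_\beta t^\beta$, factoring the corresponding tensor $\bigl(1-\sum_k t_kx_k\bigr)^{\otimes d}$, taking logs to get $-\sum_\alpha\frac{((\alpha))}{|\alpha|}p_\alpha t^\alpha$, and extracting the $t^{\delta-\basis{k}}$ coefficient before summing over $k$ — is correct and yields exactly the claimed identity. The trade-off: the paper's polarization is shorter and reduces the proof burden to the (assumed known) univariate case, whereas your log-derivative derivation is more self-contained and makes the role of the multinomial coefficient $((\alpha))$ manifest, at the cost of an extra characteristic-zero reduction step.
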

\begin{proof}
The multisymmetric identities easily follow from the usual Newton identities
by \emph{polarization}: Let $A'=A[u_1,u_2,\dots,u_r]$.
In $\Gamma^d_{A'}(A'[t])$ we have the usual Newton identities. Using the
homomorphism $A'[t]\to A'[x_1,x_2,\dots,x_r]$ defined by
$t\mapsto u_1x_1+u_2x_2+\dots+u_r x_r$ we obtain identities in
$\Gamma^d_{A'}(A'[x_1,x_2,\dots,x_r])$. Equating the coefficients of
$u^\delta$ will then give the requested identity. For details
see~\cite[Ch. II \S3]{weyl_invariants}.
\end{proof}

\begin{corollary}\label{C:elem_equiv_power_sums_low_deg}
If $d!$ is invertible in $A$ then the two subrings of $\Gamma^d_A(B)$,
generated by $(p_\alpha)_{|\alpha|\leq d}$ and $(e_\alpha)_{|\alpha|\leq d}$
respectively, coincide.
\end{corollary}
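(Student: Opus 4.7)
The plan is to prove both inclusions of subrings simultaneously by induction on $|\delta|$, using the multisymmetric Newton identities from Proposition~\pref{P:multi_symm_newton_ids} to swap between the two families one total-degree at a time.

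First I would isolate two terms in the identity for a given $\delta\in\N^r$ with $1\leq|\delta|\leq d$: the $\beta=0$ term contributes $((\delta))\,p_\delta$ (since $e_0=\gammaid{d}=1$) and the stand-alone term is $(-1)^{|\delta|}|\delta|\,e_\delta$. Thus the identity takes the form
$$((\delta))\,p_\delta \;+\; (-1)^{|\delta|}|\delta|\,e_\delta \;=\; -\!\!\sum_{\substack{\alpha+\beta=\delta\\\alpha,\beta\in\N^r\setminus 0}}(-1)^{|\beta|}((\alpha))\,p_\alpha e_\beta,$$
where every $p_\alpha$ and $e_\beta$ appearing in the right-hand sum has $|\alpha|,|\beta|<|\delta|$.

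Next I observe that both scalar coefficients $|\delta|$ and $((\delta))$ divide $d!$: indeed $|\delta|\leq d$ implies $|\delta|$ divides $d!$, and the identity $((\delta))\cdot\prod_{i=1}^r\delta_i!=|\delta|!$ together with $|\delta|\leq d$ shows that $((\delta))$ also divides $d!$. Since $d!$ is invertible in $A$, both $|\delta|$ and $((\delta))$ are units of $A$. Therefore the boxed identity can be solved either for $e_\delta$ (in terms of lower-degree $p$'s, $e$'s) or for $p_\delta$ (in terms of lower-degree $p$'s, $e$'s).

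The inductive argument is then immediate. The base case $|\delta|=1$ is trivial: for $\delta=\basis{i}$ one has $p_{\basis{i}}=x_i\times\gammaid{d-1}=e_{\basis{i}}$. For the inductive step, assume that every $p_\alpha$ and every $e_\alpha$ with $1\leq|\alpha|<|\delta|$ lies in both the subring $A[(p_\mu)_{|\mu|\leq d}]$ and the subring $A[(e_\mu)_{|\mu|\leq d}]$. Substituting in the solved identity then expresses $e_\delta$ as a polynomial over $A$ in the $p_\mu$'s with $|\mu|\leq|\delta|$ and conversely expresses $p_\delta$ in the $e_\mu$'s with $|\mu|\leq|\delta|$. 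This completes the induction up to $|\delta|=d$, and the two subrings therefore coincide. No real obstacle arises once one notices that the multinomial coefficient $((\delta))$ is forced to be a unit by the divisibility $((\delta))\mid d!$; this is the only place where the hypothesis on $d!$ is used.
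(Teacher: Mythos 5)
Your proof is correct and takes essentially the same approach as the paper: repeated (inductive) use of the multisymmetric Newton identities, with invertibility of $|\delta|$ and $((\delta))$ coming from the hypothesis on $d!$. If anything, your observation that $((\delta))$ divides $|\delta|!$ and hence $d!$ makes the invertibility claim slightly more explicit than the paper's terse remark that "all coefficients of the involved identities are $\leq|\delta|!\leq d!$ and hence invertible."
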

\begin{proof}
Repeated use of Proposition~\pref{P:multi_symm_newton_ids}
with $|\delta|\leq d$ allows us to express $p_\beta$ with $|\beta|\leq d$ as a
polynomial in $(e_\alpha)_{\alpha\leq \beta}$ and $e_\beta$ as a polynomial
in $(p_\alpha)_{\alpha\leq \beta}$. In fact, all coefficients of the involved
identities are $\leq |\delta|!\leq d!$ and hence invertible.
\end{proof}

\begin{definition}[Basis]
Let $\mon$ be the monomials in $B$ and $\monpos=\mon\setminus\{1\}$. 
For $\nu\in \N^{(\mon)}\simeq \N^{(\N^r)}$ we let
$$\msb{\nu}=\gamma^\nu(x)=\bigtimes_{\alpha\in \N^r}
\gamma^{\nu_\alpha}(x^\alpha)\in \Gamma_A(B)$$
By paragraph~\pref{X:Gamma_basis} these elements form a basis of $\Gamma_A(B)$.
A basis for $\Gamma^d_A(B)$ is then
$\bigl(\msb{\nu}\bigr)_{\nu\in\N^{(\mon)},|\nu|=d}$ or written differently
$\bigl(\msb{\nu}\times\gammaid{d-|\nu|}\bigr)_
{\nu\in\N^{(\monpos)},|\nu|\leq d}$.
\end{definition}

\begin{definition}[Multidegree]
Let the \emph{multidegree}
$\map{\mdeg}{\mon}{\N^r}$ be defined by $\mdeg(x^\alpha)=\alpha$. We
have a $\N^r$-grading on $B$ given by
$$B=\bigoplus_{\alpha\in\N^r} B_\alpha
=\bigoplus_{\alpha\in\N^r} Ax^\alpha.$$
This grading on $\Gamma^1_A(B)=B$ induces in a natural way a $\N^r$-grading on
the $A$-algebra $\Gamma_A(B)$ such that
$$\mdeg\bigl(\msb{\nu}\bigr)=\sum_{\alpha\in \N^r} \nu_{\alpha} \alpha.$$
We let $\Gamma^d_A(B)_\alpha$ be the $A$-module generated by the basis elements
$\msb{\nu}$, $|\nu|=d$ of multidegree $\alpha$. This makes
$\Gamma^d_A(B)=\bigoplus_{\alpha\in\N^r} \Gamma^d_A(B)_\alpha$ into a
$\N^r$-graded $A$-algebra as is easily seen from
Formula~\pref{F:Gamma_mult_form}.

The \emph{total degree} is the sum of the degree in every variable, e.g. the
total degree of $\msb{\nu}$ is $\sum_{\alpha\in\N^r} \nu_\alpha |\alpha|$.
\end{definition}

\begin{remark}
The multisymmetric polynomials $e_\alpha$ and $p_\alpha$ both have
multidegree $\alpha$.
\end{remark}

\begin{definition}
We let $\Gamma^d_A(B)_\ltdeg{\alpha}=
A\bigl[\Gamma^d_A(B)_{\beta,\beta<\alpha}\bigr]$ be the sub\-algebra of
$\Gamma^d_A(B)$ generated by elements of multidegree strictly less
than~$\alpha$.
\end{definition}

\begin{definition}\label{D:length}
The \emph{length} of an element $f\in\Gamma^d_A(B)$ is the smallest integer
$\ell$ such that $f=g\times\gammaid{d-\ell}$ for some $g\in\Gamma^\ell_A(B)$.
\end{definition}

\begin{definition}
Let $\primes(A)$ be the set of primes $p\in\N$ such that $p\cdot 1_A\in A$
is not invertible.
\end{definition}

\begin{remark}
If $A$ is purely of characteristic $0$, i.e. a $\Q$-algebra, then
$\primes(A)=\emptyset$. If $A$ is a local ring with residue field of
characteristic $p>0$ or $A$ is an algebra over a field of characteristic $p>0$
then $\primes(A)=\{p\}$. If $A$ is a $\Z_{(p)}$-algebra then
$\primes(A)\subseteq \{p\}$.
\end{remark}

\end{section}


\begin{section}{Generators for the ring of multisymmetric polynomials}
\label{S:gen_of_Gamma}

As before we let $A$ be any ring, $r\geq 1$ a fix integer and
$B=A[x_1,\dots,x_r]$. In this section we will prove the main theorem of this
paper~\pref{T:Gamma_generators} in which a minimal generating set of
$\Gamma^d_A(B)\iso\TS^d_A(B)$ as an $A$-algebra is given for any ring $A$.

\begin{xpar}[Classical proof in characteristic zero]\label{X:gen_TS_in_char_0}
In characteristic zero, it can
be proved~\cite{junker_93,noether_endlichkeitssatz,weyl_invariants,neeman}
that the elementary multisymmetric functions $(e_\alpha)_{|\alpha|\leq d}$
generate the $A$-algebra $\Gamma^d_A(B)$ as follows:
\begin{enumerate}
\item[1)] Any multisymmetric function is a
polynomial in the multisymmetric power sums $(p_\alpha)_{\alpha\in \N^r}$,
see~\cite[pp. 15--16]{schlafli}, \cite[\S 5]{junker_93} or
\cite[Lemma 1]{neeman}. As any element of length $1$ is a sum of multisymmetric
power sums, this can easily be proved using induction on the length.
\item[2)] The $p_\alpha$:s can be expressed in the elementary multisymmetric
functions $(e_\alpha)_{|\alpha|\leq d}$ and vice versa,
see~\cite[\S 4]{junker_93}, \cite[Lemma 2--3]{neeman} or
Proposition~\pref{P:multi_symm_newton_ids}.
\end{enumerate}
When $r=1$ step 2 is given by the classical Newton
identities~\eqref{E:Newton_identities}.
\end{xpar}

\begin{xpar}[Proof in arbitrary characteristic]
The proof will roughly follow the same line as in characteristic zero
but a much more careful treatment is required in arbitrary characteristic.

Let $g_{k,\alpha}=\gamma^k(x^\alpha)\times\gammaid{d-k}$ where $x^\alpha\in B$
is a monomial and $1\leq k\leq d$ is an integer. 
We will first show in Proposition~\pref{P:Newton_generators} that
$\Gamma^d_A(B)$ is generated as an $A$-algebra by
$(g_{k,\alpha})_{(k,\alpha)\in\calC_0}$ where
$$\calC_0=\{(k,\alpha)\;:\; 1\leq k\leq d,\; \alpha\in\N^r\setminus 0 \}.$$
Using an analogue, Proposition~\pref{P:generalized_classical_NR}, of the Newton
identities, we will then show in Corollary~\pref{C:gen_of_Gamma_in_one_degree}
that if $\gcd(\alpha)$ is invertible in $A$ then $g_{k,\alpha}$ together
with elements of strictly smaller multidegree generate every element of
multidegree $k\alpha$. We can therefore choose a subset $\calC$ of $\calC_0$
such that every multidegree $k\alpha$ occurs once in $\calC$ and
$(g_{k,\alpha})_{(k,\alpha)\in\calC}$ generates $\Gamma^d_A(B)$:
\begin{enumerate}
\item
If every prime is invertible in $A$, we can let $\calC$ be the
subset of $\calC_0$ with $k=1$. This then gives the same generating set as
obtained in step 1) of paragraph~\pref{X:gen_TS_in_char_0}.

\item
If $k$ is a field of characteristic $p>0$ and $A$ is a $k$-algebra or a local
ring with residue field $k$,
we can choose $\calC$ as the pairs $(k,\alpha)$ with $k=p^s$, $s\in\N$ and
$p\notdivide \alpha$.

\item
For general $A$ we can choose $\calC$ as the subset of $\calC_0$ such that
$\gcd(\alpha)=1$.
\end{enumerate}

The difficult part is then to get a characterization of the multidegrees
for which the corresponding generators are generated by elements of smaller
multidegree. The main ingredient is a careful study,
Proposition~\pref{P:identities_between_elem_and_Newton}, of the possible
relations between the elementary multisymmetric functions and multiples of the
generators of Corollary~\pref{C:Newton_generators2}. This is the analogue of
the second step of~\pref{X:gen_TS_in_char_0}.
Using this ingredient and Main Lemma~\pref{ML:min_order},
an explicit minimal generator set is obtained in
Theorem~\pref{T:Gamma_generators}.
\end{xpar}

We begin with the following proposition which appears
as~\cite[Cor. 4.5 b)]{ziplies_div_pow_gens_and_trace_ids} and
\cite[Cor. 2.3]{vaccarino_gen_TS}.

\begin{proposition}\label{P:Newton_generators}
$\Gamma^d_A(B)$ is generated as an $A$-algebra by elements of the form
$\gamma^k(x^\alpha)\times \gammaid{d-k}$
where $x^\alpha$ is a monomial in $B$ and $1\leq k\leq d$.
\end{proposition}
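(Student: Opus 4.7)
The plan is to reduce to the module basis of $\Gamma^d_A(B)$ described in~\pref{X:Gamma_basis}: namely, $\Gamma^d_A(B)$ is spanned as an $A$-module by the family
$$h_\nu \;:=\; \msb{\nu}\times\gammaid{d-|\nu|} \;=\; \bigtimes_{\alpha\in\monpos}\gamma^{\nu_{\alpha}}(x^\alpha)\times\gammaid{d-|\nu|},\qquad \nu\in\N^{(\monpos)},\ |\nu|\leq d.$$
So it suffices to show that each $h_\nu$ lies in the $A$-subalgebra $R$ generated by the elements $g_{k,\alpha}:=\gamma^k(x^\alpha)\times\gammaid{d-k}$. I would argue by induction on $|\nu|$; the cases $|\nu|=0$ (giving $\gammaid{d}$) and $|\nu|=1$ (giving $g_{1,\alpha}$) are immediate.

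For the inductive step with $|\nu|\geq 2$, write $\nu=\sum_{i=1}^{m}k_i\basis{\alpha_i}$ with distinct $\alpha_i\in\monpos$ and $k_i\geq 1$. If $m=1$ then $h_\nu=g_{k_1,\alpha_1}$ on the nose. If $m\geq 2$, set $\nu''=\nu-k_1\basis{\alpha_1}$ and note $|\nu''|<|\nu|$, so $h_{\nu''}\in R$ by induction. Now expand the product $g_{k_1,\alpha_1}\cdot h_{\nu''}$ in $\Gamma^d_A(B)$ using the multiplication formula~\pref{F:Gamma_mult_form}: represent $g_{k_1,\alpha_1}=\gamma^\mu(y)$ and $h_{\nu''}=\gamma^{\tilde\mu}(y)$ with $y$ ranging over $\{1,x^{\alpha_1},x^{\alpha_2},\dots,x^{\alpha_m}\}$, so that $\mu$ is supported on $\{1,x^{\alpha_1}\}$ and $\tilde\mu$ on $\{1,x^{\alpha_2},\dots,x^{\alpha_m}\}$. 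A matrix $\xi\in N_{\mu,\tilde\mu}$ is then pinned down by non-negative integers $(s_0,s_2,\dots,s_m)$ with $s_0+s_2+\cdots+s_m=k_1$ and $s_j\leq k_j$ for $j\geq 2$, the corresponding term being an element $h_{\nu^*}\in\Gamma^d_A(B)$ with $|\nu^*|=s_0+|\nu''|$. The unique summand reaching $|\nu^*|=|\nu|$ is $(s_0,s_2,\dots,s_m)=(k_1,0,\dots,0)$, which reproduces $h_\nu$; every other summand has $s_0<k_1$ and hence $|\nu^*|<|\nu|$, because a nonzero $s_j$ merges $x^{\alpha_1}$ with $x^{\alpha_j}$ into the single monomial $x^{\alpha_1+\alpha_j}$. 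Rearranging,
$$h_\nu \;=\; g_{k_1,\alpha_1}\cdot h_{\nu''}\;-\;\sum_{\substack{(s_0,s_2,\dots,s_m)\\ s_0<k_1}} h_{\nu^*},$$
and each term on the right-hand side lies in $R$ by the induction hypothesis.

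The only real obstacle is the bookkeeping involved in unfolding~\pref{F:Gamma_mult_form}: one must verify that exactly one matrix $\xi\in N_{\mu,\tilde\mu}$ contributes the ``diagonal'' term $h_\nu$ and that every other $\xi$ strictly decreases $|\nu|$. This is a routine check once one observes that the row and column constraints force $\xi$ to vanish outside the rows $\{1,x^{\alpha_1}\}$ and the columns $\{1,x^{\alpha_2},\dots,x^{\alpha_m}\}$, reducing the freedom to the distribution of the $k_1$ units of $\mu_{x^{\alpha_1}}$; any such distribution placing a nonzero entry in an $x^{\alpha_j}$-column strictly lowers $|\nu^*|$, completing the induction.
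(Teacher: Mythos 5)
Your proof is correct and follows essentially the same strategy as the paper: induction on $|\nu|$ (the length), using the multiplication formula~\pref{F:Gamma_mult_form} to express a basis element as a product of shorter basis elements plus lower-length correction terms. The only cosmetic difference is that you pick the specific decomposition $\nu=k_1\basis{\alpha_1}+\nu''$ (so one factor is already a prospective generator) and unpack the matrices $\xi\in N_{\mu,\tilde\mu}$ explicitly, whereas the paper uses an arbitrary nontrivial split $\nu=\nu_1+\nu_2$ and leaves the bookkeeping implicit.
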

\begin{proof}
We will use induction on the length, see Definition~\pref{D:length}, and prove
that every element of the basis
$\bigl(\msb{\nu}\times\gammaid{d-|\nu|}\bigr)_{\nu\in\N^{(\monpos)},
|\nu|\leq d}$ can be written as a sum of products of elements
$\gamma^{k}(x^\alpha)\times \gammaid{d-k}$. The length of an element
$u=\msb{\nu}\times\gammaid{d-|\nu|}$ is $\ell=|\nu|$.
If $u$ is not in the collection of the proposition then
$u=\msb{\nu_1}\times \msb{\nu_2} \times \gammaid{d-\ell}$ for some
non-zero $\nu_1,\nu_2\in\N^{(\monpos)}$ such that $\nu=\nu_1+\nu_2$.
Using Formula~\pref{F:Gamma_mult_form} we can then write
\begin{align*}
u &=\msb{\nu_1}\times\msb{\nu_2}\times
\gammaid{d-\left|\nu_1\right|-\left|\nu_2\right|}\\
&= \bigl(\msb{\nu_1}\times \gammaid{d-\left|\nu_1\right|}\bigr)
\bigl(\msb{\nu_2}\times \gammaid{d-\left|\nu_2\right|}\bigr)
-\sum_{\substack{\mu\in\N^{(\monpos)}\\|\mu|<\ell}}c_\mu
\msb{\mu}\times\gammaid{d-|\mu|}
\end{align*}
for some $c_\mu\in\N$. As this is a sum of products of terms of length
$<\ell$ we can conclude by induction.
\end{proof}

\begin{xpar}
The classical Newton identities~\eqref{E:Newton_identities} show that for
$x^\alpha\in B$ and $m\leq d$
$$\gamma^1(x^{m\alpha})\times\gammaid{d-1}
+(-1)^{m} m\gamma^m(x^\alpha)\times\gammaid{d-m} \in\Gamma^d_A(B)_\ltdeg{m\alpha}.$$

We will now slightly generalize this in the following proposition.
Recall from paragraph~\pref{X:rho^e_d} that for $e\geq d$ we have a
\emph{surjection}
$\map{\rho^e_d}{\Gamma^e_A(B)}{\Gamma^d_A(B)}$ such that if
$\nu\in\N^{(\monpos)}$
$$\rho^e_d\bigl(\msb{\nu}\times\gammaid{e-|\nu|}\bigr)=
\begin{cases}
\msb{\nu}\times\gammaid{d-|\nu|} & \text{if $|\nu|\leq d$}\\
0 & \text{if $|\nu|> d$}.
\end{cases}$$
In particular, basis elements of length $>d$ are mapped to zero.
\end{xpar}

\begin{proposition}[Generalized Newton relations]
\label{P:generalized_classical_NR}
Let $x^\alpha \in B$ and $k,m$ be positive integers such that $k\leq d$. Then
$$\gamma^k(x^{m\alpha})\times \gammaid{d-k}-
(-1)^{km-k} m\gamma^{km}(x^\alpha)\times\gammaid{d-km}
\in\Gamma^d_A(B)_\ltdeg{km\alpha}$$
if $km\leq d$ and
$$\gamma^k(x^{m\alpha})\times\gammaid{d-k}\in\Gamma^d_A(B)_\ltdeg{km\alpha}$$
if $km>d$.
\end{proposition}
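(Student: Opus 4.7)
The plan is to reduce the assertion to a universal identity over $\Z$ in the ring of symmetric polynomials in one variable, and then settle that identity by applying Newton's identities twice, combined with an integrality argument.

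Consider the $A$-algebra homomorphism $\phi\colon A[y] \to B$ sending $y \mapsto x^\alpha$. Functoriality of $\Gamma^d_A$ yields a homomorphism $\Gamma^d_A(\phi)$ mapping $\gamma^k(y^m) \times \gammaid{d-k}$ to $\gamma^k(x^{m\alpha}) \times \gammaid{d-k}$ and $\gamma^{km}(y) \times \gammaid{d-km}$ to $\gamma^{km}(x^\alpha) \times \gammaid{d-km}$. Equipping $A[y]$ with the $\N$-grading in which $y$ has degree~$1$, the map $\phi$ sends $y$-degree~$i$ to $x$-multidegree~$i\alpha$, so $\Gamma^d_A(\phi)$ sends the subalgebra $\Gamma^d_A(A[y])_\ltdeg{km}$ into $\Gamma^d_A(B)_\ltdeg{km\alpha}$ (since $i\alpha < km\alpha$ whenever $i < km$ and $\alpha \neq 0$). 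It therefore suffices to establish the corresponding statement in $\Gamma^d_A(A[y])$, i.e., for $r=1$ and $\alpha=1$.

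Via the canonical isomorphism $\Gamma^d_A(A[y]) \cong A[y_1, \dots, y_d]^{\SG{d}} = A[e_1, \dots, e_d]$, the two elements become $e_k(y_1^m, \dots, y_d^m)$ and $e_{km}(y_1, \dots, y_d)$ (with $e_j=0$ for $j>d$). When $km > d$, the element $e_k(y^m)$ automatically lies in $A[e_1, \dots, e_d] \subseteq A[e_1, \dots, e_{km-1}]$. In the substantive case $km \leq d$, the claim reduces to showing $e_k(y^m) - (-1)^{km-k} m\,e_{km}(y) \in A[e_1, \dots, e_{km-1}]$; by universality, it suffices to prove this over $\Z$ and then tensor with $A$.

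Over $\Q$, Newton's identity applied to the variables $y_i^m$ gives $k\,e_k(y^m) = (-1)^{k-1} p_k(y^m) + (\text{terms involving }e_i(y^m), p_j(y^m)\text{ for }i,j < k)$. Since $p_k(y^m) = p_{km}(y)$, and Newton applied to the $y_i$'s gives $p_{km}(y) = (-1)^{km-1} km\,e_{km}(y) + (\text{polynomial in }e_j(y), j < km)$, substitution and recursive reduction of the remaining terms yield
\[ k\,e_k(y^m) = (-1)^{km-k} km\,e_{km}(y) + R(e_1, \dots, e_{km-1}) \]
for some $R \in \Q[e_1, \dots, e_{km-1}]$. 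The main obstacle is that dividing by $k$ is illegitimate when $k$ is not invertible, but this is circumvented by an integrality argument: since $e_k(y^m) - (-1)^{km-k} m\,e_{km}(y)$ is a $\Z$-linear combination of monomials in $y_1, \dots, y_d$, it lies in the polynomial subring $\Z[e_1, \dots, e_d]$, and since the $e_i$ are algebraically independent over $\Z$, the $\Z$-expansion coincides with the $\Q$-expansion, forcing this element to lie in $\Z[e_1, \dots, e_{km-1}]$.
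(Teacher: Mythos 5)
Your proof is correct and takes essentially the same approach as the paper: reduce to the one-variable case via the substitution $y \mapsto x^\alpha$, exploit that $\Gamma^d$ of a one-variable polynomial ring is the polynomial ring $\Z[e_1,\dots,e_d]$, determine the coefficient of $e_{km}$ via Newton's identities after tensoring with $\Q$, and use the algebraic independence of the $e_i$ over $\Z$ to conclude the relation holds integrally. The only notable difference is the treatment of the case $km>d$: the paper first lifts to $\Gamma^e_A(B)$ with $e\geq km$ via the surjection $\rho^e_d$ so that $e_{km}$ is nonzero, proves the two-term relation there, and pushes it back down, whereas you observe directly that for $km>d$ the whole ring $A[e_1,\dots,e_d]$ already lies in $A[e_1,\dots,e_{km-1}]=\Gamma^d_A(A[y])_\ltdeg{km}$, making the claim immediate. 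Your treatment of that case is a bit more direct, though the paper's detour through $\rho^e_d$ costs nothing since that homomorphism is needed anyway in the proof of Theorem~\pref{T:Gamma_generators}.
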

\begin{proof}
Using the homomorphism $\rho^e_d$ defined in paragraph~\pref{X:rho^e_d} with
$e\geq km$ we can assume that $km\leq d$. Further using the homomorphism
$\Gamma^d_{\Z}(\Z[t])\to\Gamma^d_A(A[t])\to\Gamma^d_A(B)$ where the second map
is induced by $A[t]\to B$, mapping $t$ to $x^\alpha$, it is enough to show that
$$\gamma^k(t^m)\times \gammaid{d-k}-
(-1)^{km-k} m\gamma^{km}(t)\times\gammaid{d-km}
\in\Gamma^d_{\Z}(\Z[t])_\ltdeg{km}.$$
Since $\Gamma^d_{\Z}(\Z[t])$
is a polynomial ring in $e_1(t),\dots,e_d(t)$, where
$e_i(t)=\gamma^i(t)\times\gammaid{d-i}$, we can write $e_k(t^m)$
\emph{uniquely} as a polynomial in $e_1(t),\dots,e_{km}(t)$. Clearly, all
terms of this polynomial will be in $\Gamma^d_{\Z}(\Z[t])_\ltdeg{km}$ except
the term $e_{km}(t)$. To determine the coefficient of $e_{km}(t)$ we tensor
with $\Q$.
The classical Newton identities, equation~\eqref{E:Newton_identities}, show
that
\begin{align*}
e_1\bigl((t^{m})^k\bigr)+(-1)^{k}ke_k(t^m)
  &\in \Gamma^d_{\Q}(\Q[t^m])_\ltdeg{km}\\
e_1(t^{km})+(-1)^{km}km e_{km}(t) &\in \Gamma^d_{\Q}(\Q[t])_\ltdeg{km}
\end{align*}
and thus $e_k(t^m)=(-1)^{km-k}m e_{km}(t)+\Gamma^d_{\Z}(\Z[t])_\ltdeg{km}$.
\end{proof}

\begin{corollary}\label{C:gen_of_Gamma_in_one_degree}
Let $x^{k\alpha}\in B$ be a monomial with $\gcd(\alpha)$ invertible in $A$.
Then the subalgebra
$A\bigl[\Gamma^d_A(B)_{\beta,\beta\leq k\alpha}\bigr] \subseteq \Gamma^d_A(B)$
is generated by $\Gamma^d_A(B)_\ltdeg{k\alpha}$ together with, if
$k\leq d$, the element~$\gamma^k(x^\alpha)\times\gammaid{d-k}$.
\end{corollary}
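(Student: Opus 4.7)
The plan is to use Proposition~\pref{P:Newton_generators} to reduce to a single generator of multidegree exactly $k\alpha$, and then to relate all such generators to the distinguished element $\gamma^k(x^\alpha)\times\gammaid{d-k}$ by two applications of Proposition~\pref{P:generalized_classical_NR}.

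First, since the $\N^r$-grading on $\Gamma^d_A(B)$ is respected by multiplication and, by~\pref{P:Newton_generators}, every element of $\Gamma^d_A(B)$ is a polynomial in the generators $\gamma^\ell(x^\beta)\times\gammaid{d-\ell}$ (of multidegree $\ell\beta$, with $\beta\in\monpos$ and $1\leq\ell\leq d$), the multidegree-$k\alpha$ piece is spanned by monomials in these generators whose multidegrees sum to $k\alpha$. A monomial involving two or more non-trivial factors forces each factor to have multidegree strictly below $k\alpha$ in the componentwise order (since each $\ell_i\beta_i\geq 0$ and they sum to $k\alpha$), hence such a product already lies in $\Gamma^d_A(B)_\ltdeg{k\alpha}$. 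This reduces the claim to showing that every single generator $\gamma^\ell(x^\beta)\times\gammaid{d-\ell}$ with $\ell\beta=k\alpha$ and $\ell\leq d$ belongs to the subalgebra generated by $\Gamma^d_A(B)_\ltdeg{k\alpha}$ and, when $k\leq d$, the element $\gamma^k(x^\alpha)\times\gammaid{d-k}$.

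Next, write $g:=\gcd(\alpha)\in A^\times$ and $\alpha=g\alpha^{*}$ with $\gcd(\alpha^{*})=1$. The equation $\ell\beta=k\alpha=kg\alpha^{*}$ in $\N^r$ together with $\gcd(\alpha^{*})=1$ forces $\beta=m\alpha^{*}$ and $\ell m=kg$ for some positive integer $m$. Applying~\pref{P:generalized_classical_NR} with its triple $(k,m,\alpha)$ replaced by $(\ell,m,\alpha^{*})$ gives, modulo $\Gamma^d_A(B)_\ltdeg{k\alpha}$, that $\gamma^\ell(x^{m\alpha^{*}})\times\gammaid{d-\ell}\equiv(-1)^{\ell m-\ell}m\,\gamma^{\ell m}(x^{\alpha^{*}})\times\gammaid{d-\ell m}$ when $\ell m=kg\leq d$, and that the left-hand side already lies in $\Gamma^d_A(B)_\ltdeg{k\alpha}$ when $\ell m=kg>d$. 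The latter case automatically covers $k>d$ (then $\ell\leq d<k\leq kg$), so in that range nothing further is required and no extra generator is needed.

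Finally, when $kg\leq d$ (so in particular $k\leq d$), a second application of~\pref{P:generalized_classical_NR}, now with $(k,m,\alpha)$ replaced by $(k,g,\alpha^{*})$ and using $x^{g\alpha^{*}}=x^\alpha$, yields $\gamma^k(x^\alpha)\times\gammaid{d-k}\equiv(-1)^{kg-k}g\,\gamma^{kg}(x^{\alpha^{*}})\times\gammaid{d-kg}$ modulo $\Gamma^d_A(B)_\ltdeg{k\alpha}$. Since $g$ is invertible in $A$, this relation inverts to express the canonical element $\gamma^{kg}(x^{\alpha^{*}})\times\gammaid{d-kg}$ as a unit scalar multiple of $\gamma^k(x^\alpha)\times\gammaid{d-k}$ modulo $\Gamma^d_A(B)_\ltdeg{k\alpha}$, and combined with the previous step this places every single generator of multidegree $k\alpha$ in the claimed subalgebra. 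The only real obstacle is the bookkeeping of the divisor pairs $(\ell,m)$ with $\ell m=kg$ and the case split on whether $\ell m$ exceeds $d$; the hypothesis that $\gcd(\alpha)$ is invertible is used exactly once, to invert $g$ in the final step.
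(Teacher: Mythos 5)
Your argument is correct and fills in precisely the details behind the paper's one-line proof, which merely cites Propositions~\pref{P:Newton_generators} and~\pref{P:generalized_classical_NR}. The reduction via the $\N^r$-grading to a single homogeneous generator of multidegree $k\alpha$, the parametrization of such generators by divisor pairs $(\ell,m)$ with $\ell m=kg$ and $\beta=m\alpha^{*}$, the two applications of the generalized Newton relations with the case split on $kg\lessgtr d$, and the final inversion of $g=\gcd(\alpha)$ are all exactly as intended.
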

\begin{proof}
Follows immediately from Propositions~\pref{P:Newton_generators}
and~\pref{P:generalized_classical_NR}.
\end{proof}

\begin{corollary}\label{C:Newton_generators2}
$\Gamma^d_A(B)$ is generated as an $A$-algebra by the elements
$$\bigl(\gamma^{k}(x^\alpha)\times
\gammaid{d-k}\bigr)_{(k,x^\alpha)\in\calC}$$
where $\calC\subseteq\{1,2,\dots,d\}\times\monpos$ is one of the
collections
\begin{align*}
\calC_1 &= \{(k,x^\alpha)\;:\; \text{$\gcd(\alpha)\cdot 1_A$ invertible,
$k$ product of primes in $\primes(A)$}\}\\*[-1 mm]
\calC_2 &= \{(k,x^\alpha)\;:\; \gcd(\alpha)=1\}.
\end{align*}
\end{corollary}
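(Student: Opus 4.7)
Since Proposition~\pref{P:Newton_generators} already exhibits the collection $\calC_0=\{1,\dots,d\}\times\monpos$ as a generating set, it suffices in each case to express every $u_{k,\beta}:=\gamma^k(x^\beta)\times\gammaid{d-k}$, $(k,\beta)\in\calC_0$, as a polynomial in the $\calC$-generators. I would proceed by induction on the total degree $k|\beta|$, with inductive hypothesis that every element of $\Gamma^d_A(B)$ of multidegree $\delta$ with $|\delta|<k|\beta|$ is already $\calC$-expressible. This hypothesis is stable under the reduction because an element of $\Gamma^d_A(B)_\ltdeg{k\beta}$ decomposes, by Proposition~\pref{P:Newton_generators} together with $\N^r$-multihomogeneity, as a polynomial in $\calC_0$-generators of strictly smaller multidegree, hence strictly smaller total degree.

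For $\calC_2$ I would write $\beta=m\alpha$ with $m=\gcd(\beta)$, so that $\gcd(\alpha)=1$. If $m=1$ then $(k,\beta)\in\calC_2$ already. Otherwise Proposition~\pref{P:generalized_classical_NR} gives
\[
u_{k,\beta}=(-1)^{km-k}\,m\,u_{km,\alpha}+v,\qquad v\in\Gamma^d_A(B)_\ltdeg{km\alpha},
\]
when $km\leq d$, and $u_{k,\beta}\in\Gamma^d_A(B)_\ltdeg{km\alpha}$ when $km>d$. The leading term, when present, lies in $\calC_2$ since $\gcd(\alpha)=1$, and $v$ is $\calC_2$-expressible by induction.

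For $\calC_1$ more care is needed. Factor $k=k_1k_2$, where $k_1$ is the product of those prime factors of $k$ lying in $\primes(A)$ (so $k_2$ is invertible in $A$); similarly write $\gcd(\beta)=mn$ with $m$ the $\primes(A)$-part and $n$ invertible in $A$, and set $\alpha:=\beta/m$, so that $\beta=m\alpha$ and $\gcd(\alpha)=n$ is invertible. If $m=k_2=1$ then $(k,\beta)\in\calC_1$ already, and the case $km>d$ is dispatched directly by the vanishing case of Proposition~\pref{P:generalized_classical_NR} and induction. Otherwise, I would apply Proposition~\pref{P:generalized_classical_NR} twice — once to $u_{k,m\alpha}$ yielding leading term $\pm m\,u_{km,\alpha}$, and once with parameters $k_1m$ in place of $k$ and $k_2$ in place of $m$ (so $k_1m\cdot k_2=km$) to $u_{k_1m,k_2\alpha}$, yielding leading term $\pm k_2\,u_{km,\alpha}$. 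Since $k_2$ is invertible in $A$, the second relation solves for $u_{km,\alpha}$, and substitution into the first gives
\[
u_{k,\beta}=\pm\,m\,k_2^{-1}\,u_{k_1m,\,k_2\alpha}+w,\qquad w\in\Gamma^d_A(B)_\ltdeg{km\alpha}.
\]
The pair $(k_1m,k_2\alpha)$ lies in $\calC_1$ since $k_1m$ is a product of primes in $\primes(A)$ and $\gcd(k_2\alpha)=k_2n$ is a unit in $A$; and $w$ is dispatched by induction.

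The main obstacle is precisely the $\calC_1$ step: a single application of Proposition~\pref{P:generalized_classical_NR} produces $u_{km,\alpha}$ with coefficient $m$, which is non-invertible in $A$ and cannot be cleared. The trick of applying the relation a second time at $(k_1m,k_2\alpha)$ replaces the offending coefficient $m$ by the invertible scalar $k_2$, making the division legitimate. The separation of $k$ and $\gcd(\beta)$ into invertible and $\primes(A)$-parts is precisely what is needed both to place $(k_1m,k_2\alpha)$ in $\calC_1$ and to guarantee the invertibility of the cancelling factor.
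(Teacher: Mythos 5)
Your proof is correct and amounts to the same argument as the paper's: the paper derives the statement in one line from Corollary~\pref{C:gen_of_Gamma_in_one_degree} (itself a one-line consequence of Propositions~\pref{P:Newton_generators} and~\pref{P:generalized_classical_NR}), and you are simply unrolling that chain. Your double application of Proposition~\pref{P:generalized_classical_NR} for $\calC_1$, trading the possibly non-invertible factor $m$ for the invertible $k_2$, is exactly the mechanism packaged in Corollary~\pref{C:gen_of_Gamma_in_one_degree} via the hypothesis that $\gcd(\alpha)$ be invertible.
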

\begin{proof}
If we let $\calC=\{1,2,\dots,d\}\times\monpos$ be the full set
of indices $(k,x^\alpha)$ then the corresponding set of elements
$\bigl\{\gamma^k(x^\alpha)\times\gammaid{d-k}\bigr\}$ is a generating set of
$\Gamma^d_A(B)$
by Proposition~\pref{P:Newton_generators}. That $\calC_1$ and $\calC_2$ also
give generating sets of $\Gamma^d_A(B)$ follows from
Corollary~\pref{C:gen_of_Gamma_in_one_degree}.
\end{proof}

\begin{remark}
Both generating sets of Corollary~\pref{C:Newton_generators2} have exactly
one generator of each multidegree in $\N^r\setminus 0$.
If $A=\Z$ then the two collections $\calC_1$ and $\calC_2$ coincide.
In~\cite[pf. of Thm. 1]{vaccarino_gen_TS} Vaccarino gives a proof of
Corollary~\pref{C:Newton_generators2} with the second collection using a
slightly different version of Proposition~\pref{P:Newton_generators}.
As it is sometimes convenient to also have the first collection we will use
either collection. Besides, all proofs work equally well with both collections.
\end{remark}

\begin{theorem}[{\cite[Thm. 1]{vaccarino_gen_TS}}]\label{T:Vaccarino_result}
The ring of multisymmetric functions $\Gamma^d_A(A[x_1,x_2,\dots,x_r])\iso
\TS^d_A(A[x_1,x_2,\dots,x_r])$
is generated as an $A$-algebra by $\gamma^k(x^\alpha)\times\gammaid{d-k}$ where
$\left(k,x^\alpha\right)\in\{1,2,\dots,d\}\times\monpos$ is such that
$\gcd(\alpha)=1$ and the total degree
$|\mdeg\bigl(\gamma^k(x^\alpha)\times\gammaid{d-k}\bigr)|=k|\alpha|$ is less
than or equal to $\max\bigl(d,r(d-1)\bigr)$.
\end{theorem}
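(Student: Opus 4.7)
The starting point is Corollary~\pref{C:Newton_generators2} applied to the collection $\calC_2$, which shows that $\Gamma^d_A(B)$ is generated as an $A$-algebra by the elements $g_{k,\alpha}:=\gamma^k(x^\alpha)\times\gammaid{d-k}$ with $\gcd(\alpha)=1$ and $1\leq k\leq d$. It therefore suffices to show that each $g_{k,\alpha}$ with $k|\alpha|>\max(d,r(d-1))$ can be written as a polynomial in the $g_{k',\alpha'}$'s of strictly smaller total degree.

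The trivial cases $r=1$ or $d=1$ are immediate, so assume $r\geq 2$ and $d\geq 2$, when the bound reads $r(d-1)$. I would argue by induction on the total degree $n=k|\alpha|$. The case $k=d$ is handled by multiplicativity of $\gamma^d$: one has $\gamma^d(x^\alpha)=\prod_i\gamma^d(x_i)^{\alpha_i}$, so $g_{d,\alpha}$ factors as a product of primitive generators $g_{d,\basis{i}}=\gamma^d(x_i)$, each of total degree $d\leq r(d-1)$. For $k<d$, I would apply the multisymmetric Newton identities of Proposition~\pref{P:multi_symm_newton_ids} to the multi-index $\delta:=k\alpha$. Since $|\delta|=k|\alpha|>r(d-1)\geq d$, one has $e_\delta=0$, and the identity reduces to a linear relation among products $p_\mu e_\nu$ with $\mu+\nu=k\alpha$, $\mu\neq 0$ and $|\nu|\leq d$. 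Each factor $p_\mu=g_{1,\mu}$ has total degree $|\mu|<n$ and each $e_\nu$ has total degree $|\nu|\leq d$. Combined with Proposition~\pref{P:generalized_classical_NR}, which lets me swap between $g_{k,\alpha}$ and a scalar multiple of $p_{k\alpha}=g_{1,k\alpha}$ modulo $\Gamma^d_A(B)_\ltdeg{k\alpha}$, this should express $g_{k,\alpha}$ as a polynomial in generators of total degree strictly less than $n$.

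The main obstacle is that both the Newton identities and Proposition~\pref{P:generalized_classical_NR} introduce integer coefficients (such as $((\mu))$, $|\delta|$ and $k$) which need not be units in $A$. A naive combination of the two ingredients produces only a \emph{multiple} $((k\alpha))\cdot k\cdot g_{k,\alpha}$ as a polynomial in lower-total-degree generators, not $g_{k,\alpha}$ itself. My plan to bypass this is to reduce first to the universal case $A=\Z$, using that the formation of $\Gamma^d_A(B)$ commutes with base change by~\pref{X:Gamma-TS-as-mod}, and then to produce an integral identity writing $g_{k,\alpha}$ itself (and not merely a multiple) as a $\Z$-polynomial in lower-degree generators. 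This integrality step is the delicate part of the argument; it is precisely the obstruction that prevents Vaccarino's generating set from being minimal in positive characteristic and that is ultimately resolved by the finer $\Qp{}$-analysis of Main Lemma~\pref{ML:min_order} used in Theorem~\pref{T:Gamma_generators}. In Vaccarino's original treatment in \cite{vaccarino_gen_TS}, the integrality is handled through a more refined combinatorial argument (essentially via the Schur algebra and a straightening law) rather than the direct Newton manipulation sketched here.
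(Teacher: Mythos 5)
Your opening move --- reducing via Corollary~\pref{C:Newton_generators2} (with the collection $\calC_2$) to showing that the generators $g_{k,\alpha}=\gamma^k(x^\alpha)\times\gammaid{d-k}$ with total degree $k|\alpha|>\max\bigl(d,r(d-1)\bigr)$ are redundant --- is exactly the paper's first step, and your observations along the way are all correct (multiplicativity of $\gamma^d$ handles $k=d$; when $|\delta|>d$ one has $e_\delta=0$ so Proposition~\pref{P:multi_symm_newton_ids} gives a relation; Proposition~\pref{P:generalized_classical_NR} with $m=k$, $k=1$ converts between $p_{k\alpha}$ and $k\cdot g_{k,\alpha}$ modulo $\Gamma^d_A(B)_\ltdeg{k\alpha}$). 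But the paper does \emph{not} attempt to re-derive the degree bound: it simply cites Fleischmann's theorem~\cite[Thm.~4.6]{fleischmann} (the same result appears later, independently, as Corollary~\pref{C:non-sharp_total_degree_bound} of the much stronger Theorem~\pref{T:Gamma_generators}), after which the statement is immediate.

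Your attempt to prove the degree bound directly therefore leaves a genuine gap, and it is exactly the one you flag: the Newton identity with $e_\delta=0$ only produces $((\delta))\,p_\delta$, not $p_\delta$, as a polynomial in lower-total-degree generators, and Proposition~\pref{P:generalized_classical_NR} costs a further factor of $k$. Reducing to the universal case $A=\Z$ does not help by itself, since the question becomes whether $g_{k,\alpha}$ \emph{itself}, not merely an integer multiple of it, lies in the subring generated in lower degree --- equivalently, whether the integer coefficients $c_\nu=\pm k((\nu))/|\nu|$ arising from the many possible Newton-type contractions have $\gcd$ equal to $1$. Establishing that is precisely the content of Main Lemma~\pref{ML:min_order} combined with Lemma~\pref{L:sharp_contraction_coefficients}, i.e.\ the heart of Theorem~\pref{T:Gamma_generators}; it is not something a one-pass induction on total degree can deliver. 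As written, the proposal diagnoses the obstruction accurately but does not overcome it, so it is an outline rather than a proof. The honest fixes are to invoke Fleischmann's bound as a black box (as the paper does here) or to prove Theorem~\pref{T:Gamma_generators} first and deduce the present statement as a corollary.
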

\begin{proof}
We repeat the proof in~\cite{vaccarino_gen_TS}.
A result of Fleischmann~\cite[Thm. 4.6]{fleischmann}, cf.
Corollaries~\pref{C:multidegree_bound}
and~\pref{C:non-sharp_total_degree_bound},
shows that $\Gamma^d_A(B)$ is generated by the elements of total degree
$\leq{\max\bigl(d,r(d-1)\bigr)}$. The theorem then follows from
Corollary~\pref{C:Newton_generators2} using the second collection.
\end{proof}

\begin{remark}
As mentioned in the introduction, the generating set of
Theorem~\pref{T:Vaccarino_result} is not minimal. We will proceed to find a
minimal subset of either the first or second collection of
Corollary~\pref{C:Newton_generators2} which generates $\Gamma^d_A(B)$. Such
a minimal subset is then \emph{unique}. In fact, as the generators of
both collections are homogeneous and each multidegree occurs exactly once, any
minimal subset of $\calC$ which generates $\Gamma^d_A(B)$ consists of the
elements $\gamma^k(x^\alpha)\times\gammaid{d-k}$ such that
$\gamma^k(x^\alpha)\times\gammaid{d-k}\notin \Gamma^d_A(B)_\ltdeg{k\alpha}$.
\end{remark}

\begin{remark}
To determine if $\gamma^k(x^\alpha)\times\gammaid{d-k}\notin
\Gamma^d_A(B)_\ltdeg{k\alpha}$ it will be useful to lift this relation to
a relation in $\Gamma^n_A(B)$ where $n$ is an integer such that
$n\geq k|\alpha|$, cf. the proof of Theorem~\pref{T:Gamma_generators}.
In several of the following results involving $x^\alpha$ we will therefore use
$\Gamma^n_A(B)$ instead of $\Gamma^d_A(B)$ with an $n$ such that
$n\geq k|\alpha|$.
\end{remark}

\begin{remark}\label{R:relations_between_elem_and_Newton}
Let $x^\alpha\in\monpos$ and choose $n\in\N$ such that $|\alpha|\leq n$. The
multisymmetric Newton identities, Proposition~\pref{P:multi_symm_newton_ids},
show that in $\Gamma^n_A(B)$
\begin{equation}\label{ER:rbeN}
((\alpha))p_\alpha(x)+(-1)^{|\alpha|}|\alpha| e_\alpha(x)
\in \Gamma^n_A(B)_\ltdeg{\alpha}.
\end{equation}
If $|\alpha|$ divides $((\alpha))$, the image of $|\alpha|$ in $A$ is not a
zero divisor, and the LHS of
relation~\eqref{ER:rbeN} belongs to $|\alpha|\Gamma^n_A(B)_\ltdeg{\alpha}$,
then we obtain the relation
$$\frac{\bigl(|\alpha|-1\bigr)!}{\alpha_1!\dots \alpha_r!} p_\alpha(x)
+(-1)^{|\alpha|}e_\alpha(x)\in\Gamma^n_A(B)_\ltdeg{\alpha}.$$
For arbitrary $\alpha$ and $A$ this relation is not true, but we will show that
there exist similar relations between $\gamma^k(x^\alpha)\times\gammaid{d-k}$
and $e_{k\alpha}(x)$. We will first show that if such a relation exists then
it is unique.
\end{remark}

\begin{proposition}\label{P:uniqueness_of_contraction_coeff}
Let $x^{k\alpha}$ be a monomial in $B$ such that $\gcd(\alpha)$ is invertible
in $A$ and $n\geq k|\alpha|$. Let $a\in A$ be such that
$$a\gamma^k(x^\alpha)\times\gammaid{n-k}\in \Gamma^n_A(B)_\ltdeg{k\alpha}.$$
Then $a=0$.
\end{proposition}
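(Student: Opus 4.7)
The plan is to reduce to the one-variable case, where the polynomial structure of $\Gamma^n_A(A[y])=A[e_1(y),\dots,e_n(y)]$ combined with Proposition~\pref{P:generalized_classical_NR} yields the conclusion. The reduction proceeds via substitution homomorphisms $\pi_w:B\to A[y]$, $x_i\mapsto y^{w_i}$, for weights $w\in\N^r_{>0}$, but I would first lift the hypothesis from the given $n$ to arbitrarily large $n'$ using the homomorphisms $\rho$ from paragraph~\pref{X:rho^e_d}.

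For the lifting, the key observation is that whenever $n\geq k|\alpha|$ the homomorphism $\rho^{n+1}_n$ is injective on the multidegree-$k\alpha$ component: its kernel is spanned by basis elements $\msb{\nu}$ with $\nu\in\N^{(\monpos)}$ and $|\nu|=n+1$, and in multidegree $k\alpha$ one would need $k|\alpha|=\sum_\beta\nu_\beta|\beta|\geq\sum_\beta\nu_\beta=n+1>k|\alpha|$, which is absurd. Writing the hypothesis as $a\gamma^k(x^\alpha)\times\gammaid{n-k}=\sum_i c_i\prod_j u_{ij}$ with $\mdeg(u_{ij})<k\alpha$, lifting each $u_{ij}$ via the natural module section $\msb{\eta}\times\gammaid{n-|\eta|}\mapsto\msb{\eta}\times\gammaid{n'-|\eta|}$, and using that $\rho$ is a ring homomorphism together with the preceding injectivity, I would conclude that $a\gamma^k(x^\alpha)\times\gammaid{n'-k}\in \Gamma^{n'}_A(B)_\ltdeg{k\alpha}$ for every $n'\geq n$.

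For the substitution, $\Gamma^n(\pi_w)$ sends the multidegree-$\beta$ component to the $\N$-graded degree $w\cdot\beta$ component of $\Gamma^n_A(A[y])$. Since $w_i\geq 1$, any $\beta<k\alpha$ componentwise with $\beta\neq k\alpha$ satisfies $w\cdot\beta<kM$, where $M=w\cdot\alpha$, so $\Gamma^n(\pi_w)$ maps $\Gamma^n_A(B)_\ltdeg{k\alpha}$ into $A[e_1(y),\dots,e_{kM-1}(y)]$. When $kM\leq n$, Proposition~\pref{P:generalized_classical_NR} gives $\gamma^k(y^M)\times\gammaid{n-k}\equiv(-1)^{k(M-1)}M\,e_{kM}(y)\pmod{A[e_1(y),\dots,e_{kM-1}(y)]}$, and algebraic independence of the $e_j(y)$ forces $aM=0$ in $A$.

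To finish, I would lift to some $n'\geq k(|\alpha|+\max_i\alpha_i)$ and apply the substitution for the weights $w^{(0)}=(1,\dots,1)$ and $w^{(i)}=w^{(0)}+\basis{i}$, $i=1,\dots,r$, producing $aM_j=0$ with $M_0=|\alpha|$ and $M_i=|\alpha|+\alpha_i$. Since $M_i-M_0=\alpha_i$ and $M_0=\sum_i\alpha_i$, the gcd of the $M_j$'s equals $\gcd(\alpha)$, so $a\cdot\gcd(\alpha)=0$ and the invertibility hypothesis forces $a=0$. The main obstacle is the lifting step, specifically the kernel computation for $\rho^{n+1}_n$ restricted to multidegree $k\alpha$; once that is in place, the remainder is a clean application of the one-variable theory.
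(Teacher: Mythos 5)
Your proposal is correct, and it takes a genuinely different route from the paper's. The paper's proof lifts the base ring $A$ to a characteristic-zero ring $A'$ (a polynomial ring over $\Z$, or a suitable localization), carries the hypothesis over modulo the kernel ideal, specializes via the single substitution $x_j\mapsto t$ for all $j$, and concludes because $|\alpha|$ is a non-zero-divisor in $A'$. You instead stay over $A$ throughout, lifting the \emph{index} $n$ to arbitrarily large $n'$ via the maps $\rho^{n'}_n$, and then apply a \emph{family} of weighted substitutions $x_i\mapsto y^{w_i}$ to harvest $a(w\cdot\alpha)=0$ for several weights; choosing $w^{(0)}=(1,\dots,1)$ and $w^{(i)}=w^{(0)}+\basis{i}$ gives $a|\alpha|=0$ and $a\alpha_i=0$ for each $i$, hence $a\gcd(\alpha)=0$ by B\'ezout, and the invertibility hypothesis kills $a$.

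Both steps of your argument check out. The $n$-lifting works because $\ker\rho^{n'}_n$ is spanned by basis elements $\msb{\nu}\times\gammaid{n'-|\nu|}$ with $|\nu|>n$, and in multidegree $k\alpha$ one would need $k|\alpha|=\sum_\beta\nu_\beta|\beta|\geq|\nu|>n\geq k|\alpha|$, which is absurd; since $\Gamma^{n'}_A(B)_\ltdeg{k\alpha}$ and $\ker\rho^{n'}_n$ are both graded by multidegree, lifting the homogeneous generators $u_{ij}$ of multidegree $<k\alpha$ via the module section and comparing multidegree-$k\alpha$ parts indeed transports the relation to $\Gamma^{n'}_A(B)$. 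The weighted-substitution step then reduces to the one-variable polynomial ring $A[e_1(y),\dots,e_{n'}(y)]$ exactly as you describe, and Proposition~\pref{P:generalized_classical_NR} together with algebraic independence of the $e_j(y)$ gives $aM=0$. A pleasant feature of your approach is that it makes entirely transparent \emph{why} the hypothesis is about $\gcd(\alpha)$ rather than $|\alpha|$: the weights triangulate the gcd directly. (In the paper's argument one also needs the lifted ring $A'$ to invert $\gcd(\alpha)$ in order to apply Corollary~\pref{C:gen_of_Gamma_in_one_degree}, which is a small point to keep track of but is not an issue in your version.) The paper's characteristic-zero lifting, on the other hand, dispenses with both the $n$-lifting and the multiple substitutions at the cost of introducing the auxiliary ring.
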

\begin{proof}
Let $A'\surj A$ be any lifting of $A$ to a ring of characteristic $0$, e.g.
$A'=\Z[ \left(T_{a}\right)_{a\in A} ]$, and let $I=\ker(A'\surj A)$ and
$B'=A'[x_1,x_2,\dots,x_r]$. We have an induced surjection
$\Gamma^n_{A'}(B')\surj \Gamma^n_{A'}(B')\otimes_{A'} A\iso
\Gamma^n_A(B)$. Let $a'\in A'$ be a lifting of $a$. Then
$$a'\gamma^k(x^\alpha)\times\gammaid{n-k}+
\sum_{\begin{subarray}{l}\nu \in\N^{(\mon)}\\|\nu| =n\end{subarray}} i_\nu \msb{\nu}
\in \Gamma^n_{A'}(B')_\ltdeg{k\alpha}$$
for some $i_\nu\in I$. Since $\Gamma^n_{A'}(B')_\ltdeg{ka}$ is graded by
multidegree, taking the part with multidegree $k\alpha$ we obtain by
Corollary~\pref{C:gen_of_Gamma_in_one_degree}
\begin{equation}\label{E:uniqueness-eq1}
(a'+i)\gamma^k(x^\alpha)\times\gammaid{n-k}
\in\Gamma^n_{A'}(B')_\ltdeg{k\alpha}
\end{equation}
with $i\in I$. The homomorphism
${B'=A'[x_1,x_2,\dots,x_r]\surj A'[t]}$, taking $x_j$ to $t$ for every
$1\leq j\leq r$, induces a homomorphism
$\Gamma^n_{A'}(B')\surj\Gamma^n_{A'}(A'[t])\iso
A'[e_1(t),e_2(t),\dots,e_n(t)]$ which applied to
equation~\eqref{E:uniqueness-eq1} gives
$$(a'+i)\gamma^k( t^{|\alpha|} )\times\gammaid{n-k}
\in \Gamma^n_{A'}(A'[t])_\ltdeg{k|\alpha|}.$$
Thus by the generalized Newton relations of
Proposition~\pref{P:generalized_classical_NR} it follows that
$$(a'+i)|\alpha|e_{k|\alpha|}(t)\in\Gamma^n_{A'}(A'[t])_\ltdeg{k|\alpha|}.$$
As $\Gamma^n_{A'}(A'[t])_\ltdeg{k|\alpha|}=
A'[e_1(t),e_2(t),\dots,e_{k|\alpha|-1}(t)]$ and $\Gamma^n_{A'}(A'[t])$ is
a polynomial ring we see that $(a'+i)|\alpha|=0$. But the integer $|\alpha|$
is not a zero divisor in $A'$ by construction. Hence $a'+i=0$ and a
fortiori $a=0$.
\end{proof}

As an immediate corollary of
Proposition~\pref{P:uniqueness_of_contraction_coeff} we see that the generators
of total degree $\leq d$ in any of the collections of
Corollary~\pref{C:Newton_generators2} are contained in the minimal generating
subset:

\begin{corollary}\label{C:low-deg-necessary}
Let $x^{k\alpha}\in\monpos$ then
$\gamma^k(x^\alpha)\times\gammaid{d-k}\notin\Gamma^d_A(B)_\ltdeg{k\alpha}$ if
$k|\alpha|\leq d$.
\end{corollary}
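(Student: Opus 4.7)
The approach is a direct application of Proposition~\pref{P:uniqueness_of_contraction_coeff}, which has already done the heavy lifting. My plan is simply to instantiate that proposition with $n=d$ and $a=1_A$, and observe that the hypothesis $n\geq k|\alpha|$ is precisely the assumed inequality $k|\alpha|\leq d$. (The proposition also requires $\gcd(\alpha)$ to be invertible in $A$; this is implicit from the surrounding context, since the corollary is used to pick out unavoidable generators from one of the collections $\calC_1$, $\calC_2$ of Corollary~\pref{C:Newton_generators2}, both of which are designed so that $\gcd(\alpha)$ is invertible in $A$.)

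The argument then runs by contradiction. Suppose, for contradiction, that
$$\gamma^k(x^\alpha)\times\gammaid{d-k}\in\Gamma^d_A(B)_\ltdeg{k\alpha}.$$
Then this relation, read as an instance of the hypothesis of Proposition~\pref{P:uniqueness_of_contraction_coeff} with $a=1_A$ and $n=d$, forces the conclusion $1_A=0$ in $A$, which is impossible. Hence the displayed containment cannot hold, which is exactly the statement of the corollary.

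There is no serious obstacle in this proof: the key technical input, that contractions with the element $\gamma^k(x^\alpha)\times\gammaid{n-k}$ are injective modulo elements of strictly smaller multidegree, has been packaged into Proposition~\pref{P:uniqueness_of_contraction_coeff} using the lift to a torsion-free ring $A'$ in characteristic $0$ and reduction to the one-variable case via $x_j\mapsto t$. The role of this corollary is purely organizational: it confirms that every generator in the collections $\calC_1$, $\calC_2$ whose total multidegree $k|\alpha|$ is at most $d$ must appear in \emph{every} generating set of the form dictated by Corollary~\pref{C:Newton_generators2}, so the nontrivial combinatorics of Theorem~\pref{T:Gamma_generators} only enters in the range $k|\alpha|>d$.
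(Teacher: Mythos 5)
Your proof is correct and is exactly the argument the paper has in mind: the paper presents the corollary as ``an immediate corollary of Proposition~\ref{P:uniqueness_of_contraction_coeff}'' with no further detail, and instantiating that proposition with $n=d$ and $a=1_A$ is precisely the one-line contradiction intended. You are also right to flag that the hypothesis ``$\gcd(\alpha)$ invertible in $A$'' is suppressed in the corollary's statement but must be imported from context: without it the claim can genuinely fail, e.g.\ with $k=1$, $\alpha=p\basis{1}$ over $A=\F_p$ and $d\geq p$, the Newton identity gives $x_1^p\times\gammaid{d-1}=-(-1)^{p}p\,e_p+\Gamma^d_A(B)_\ltdeg{p\basis{1}}=\Gamma^d_A(B)_\ltdeg{p\basis{1}}$, whereas in every place the paper invokes the corollary (the proof of Theorem~\ref{T:Gamma_generators}) the pair $(k,\alpha)$ is drawn from $\calC_1$ or $\calC_2$, so the invertibility is automatic.
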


We now establish relations of the kind mentioned
in Remark~\pref{R:relations_between_elem_and_Newton}.

\begin{proposition}\label{P:identities_between_elem_and_Newton}
Let $f_1,f_2,\dots,f_s\in\monpos$ be non-trivial monomials in $B$ and
$\ell_1,\ell_2,\dots,\ell_s$
positive integers such that $f_1^{\ell_1}\dots f_s^{\ell_s}=x^{k\alpha}$ with
$g=\gcd(\alpha)$ invertible in $A$ and let $n\geq k|\alpha|$.
Then $m=((\ell))\frac{kg}{|\ell|}\in \Z$ and $a=m/g\in A$ is the unique
element in $A$ such that
\begin{equation}\label{E:id_elem_Newton-eq1}
\gamma^\ell(f) \times \gammaid{n-|\ell|}
- (-1)^{|\ell|-k} a\gamma^k(x^\alpha)\times\gammaid{n-k}
\in \Gamma^n_A(B)_\ltdeg{k\alpha}.
\end{equation}
\end{proposition}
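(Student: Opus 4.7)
My plan splits into three parts: uniqueness, integrality of $m$, and identification of $a$ via polarization followed by descent.

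\emph{Uniqueness and integrality.} Uniqueness is immediate from Proposition~\pref{P:uniqueness_of_contraction_coeff}: if $a,a'\in A$ both satisfy~\pref{E:id_elem_Newton-eq1}, then $(a-a')\gamma^k(x^\alpha)\times\gammaid{n-k}\in\Gamma^n_A(B)_\ltdeg{k\alpha}$, which forces $a=a'$. For integrality of $m$, observe that $\ell_j((\ell))/|\ell|=((\ell-\basis{j}))\in\Z$ for every $j$, so by Bezout $\gcd(\ell_1,\ldots,\ell_s)\cdot((\ell))/|\ell|\in\Z$. Taking componentwise gcd in the relation $\sum_i\ell_i\mdeg(f_i)=k\alpha$ shows $\gcd(\ell_i)\mid\gcd(k\alpha)=kg$, whence $m=(kg/\gcd(\ell_i))\cdot\gcd(\ell_i)((\ell))/|\ell|\in\Z$. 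Then $a=m/g\in A$ since $g$ is invertible in $A$.

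\emph{Computing $a$ over $\Q$ via polarization.} Introduce indeterminates $u_1,\ldots,u_s$ and set $F=\sum_i u_if_i\in\Q[u,x]$. In $\Gamma^n_\Q(\Q[u,x])$, the expansion $\gamma^{|\ell|}(F)=\sum_{|\nu|=|\ell|}u^\nu\gamma^\nu(f)$ presents $\gamma^\ell(f)\times\gammaid{n-|\ell|}$ as the coefficient of $u^\ell$ in $e_{|\ell|}(F):=\gamma^{|\ell|}(F)\times\gammaid{n-|\ell|}$. Pulling the single-variable Newton identity~\pref{E:Newton_identities} back along the homomorphism $t\mapsto F$ gives
$$|\ell|\cdot e_{|\ell|}(F)=(-1)^{|\ell|-1}p_{|\ell|}(F)+(-1)^{|\ell|-1}\!\!\sum_{\substack{a+b=|\ell|\\a,b\geq 1}}\!\!(-1)^b p_a(F)e_b(F),$$
where $p_j(F)=F^j\times\gammaid{n-1}$. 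By the multinomial expansion of $F^{|\ell|}$, the $u^\ell$-coefficient of $p_{|\ell|}(F)$ is $((\ell))p(f^\ell)=((\ell))p_{k\alpha}$, using $f^\ell=x^{k\alpha}$. In each correction summand $p_a(F)e_b(F)$ with $a,b\geq 1$, the $u^\ell$-coefficient is $\sum_{\mu+\nu=\ell,|\mu|=a,|\nu|=b}((\mu))p(f^\mu)\bigl(\gamma^\nu(f)\times\gammaid{n-b}\bigr)$. Since $\mu,\nu\neq 0$ and the $f_i\in\monpos$ are non-trivial monomials, both factors have multidegrees $\sum\mu_i\mdeg(f_i)$ and $\sum\nu_i\mdeg(f_i)$ strictly below $k\alpha$ in the componentwise order, so every such product lies in $\Gamma^n_\Q(\Q[x])_\ltdeg{k\alpha}$. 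Dividing by $|\ell|$ (legal over $\Q$) and combining with Proposition~\pref{P:generalized_classical_NR} applied to $x^\alpha$, which gives $p_{k\alpha}\equiv(-1)^{k-1}k\gamma^k(x^\alpha)\times\gammaid{n-k}$ modulo $\Gamma^n_\Q(\Q[x])_\ltdeg{k\alpha}$, yields~\pref{E:id_elem_Newton-eq1} over $\Q$ with coefficient $(-1)^{|\ell|-k}\cdot k((\ell))/|\ell|=(-1)^{|\ell|-k}\cdot m/g$.

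\emph{Descent to arbitrary $A$.} Applying Corollary~\pref{C:gen_of_Gamma_in_one_degree} over the ring $\Z[1/g]$ produces some $a^*\in\Z[1/g]$ verifying~\pref{E:id_elem_Newton-eq1} in $\Gamma^n_{\Z[1/g]}(\Z[1/g][x_1,\ldots,x_r])$. Its image in $\Q$ must agree with the computed $m/g$, and injectivity of $\Z[1/g]\hookrightarrow\Q$ forces $a^*=m/g$ in $\Z[1/g]$. Base change along the ring map $\Z[1/g]\to A$ (which exists since $g$ is invertible in $A$) transfers the identity to $\Gamma^n_A(B)$ with $a=m/g$. The main technical nuisance will be careful multidegree book-keeping through the $u^\ell$-coefficient extraction, specifically verifying that every correction term arising from the Newton identity lands in $\Gamma^n_A(B)_\ltdeg{k\alpha}$ and that the $\ltdeg{k\alpha}$-containment in $\Gamma^n_\Q(\Q[u,x])$ descends cleanly along the extraction map to $\Gamma^n_\Q(\Q[x])$.
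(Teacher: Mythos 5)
Your proof is correct, but it takes a genuinely different route from the paper's. The two proofs share the same uniqueness step via Proposition~\pref{P:uniqueness_of_contraction_coeff} and the same appeal to Proposition~\pref{P:generalized_classical_NR} to pass from $p_{k\alpha}$ to $\gamma^k(x^\alpha)$, but diverge in how they pin down the coefficient $a$. The paper reduces to $A=\Z$, multiplies relation~\pref{E:id_elem_Newton-eq1} by $\ell!$ (legitimate because $\Gamma^n_\Z(B')$ is a free $\Z$-module, so $\ell!$ is not a zero-divisor), thereby reducing to the case $\ell=1^{|\ell|}$, and then runs a direct induction on $s$ using the shuffle product and the multiplication formula~\pref{F:Gamma_mult_form}. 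You instead polarize: introduce dummy scalars $u_1,\dots,u_s$, extract the $u^\ell$-coefficient of the one-variable Newton identity applied to $F=\sum_i u_if_i$, read off $a=k((\ell))/|\ell|$ over $\Q$, and descend through $\Z[1/g]\hookrightarrow\Q$. Your route is closer in spirit to the classical characteristic-zero argument and yields the formula for $a$ more conceptually, at the cost of needing the explicit integrality argument for $m$ (which you supply neatly via $\ell_j((\ell))/|\ell|=((\ell-\basis{j}))$ and Bezout) and the base-change bookkeeping; the paper's inductive computation is more elementary and stays integral throughout, so it never has to address integrality or injectivity of $\Z[1/g]\to\Q$ separately. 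One small imprecision worth tidying: the polarization step should take place over the base $\Q[u]$ (i.e.\ in $\Gamma^n_{\Q[u]}(\Q[u][x_1,\dots,x_r])\cong\Gamma^n_\Q(\Q[x_1,\dots,x_r])\otimes_\Q\Q[u]$), so that the $u_i$ are scalars and $\gamma^{\nu_i}(u_if_i)=u_i^{\nu_i}\gamma^{\nu_i}(f_i)$; writing $\Gamma^n_\Q(\Q[u,x])$ with $\Q$ as base would make the $u_i$ module elements and invalidate the expansion $\gamma^{|\ell|}(F)=\sum_{|\nu|=|\ell|}u^\nu\gamma^\nu(f)$.
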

\begin{proof}
The existence and uniqueness of $a$ follow from
Corollary~\pref{C:gen_of_Gamma_in_one_degree}
and Proposition~\pref{P:uniqueness_of_contraction_coeff} respectively.
Proposition~\pref{P:generalized_classical_NR} shows that
$$\gamma^\ell(f) \times \gammaid{n-|\ell|}
- (-1)^{|\ell|-k'} ag\gamma^{k'}(x^{\alpha'})\times\gammaid{n-k'}
\in \Gamma^n_A(B)_\ltdeg{k'\alpha'}$$
with $k'=kg$ and $\alpha'=\alpha/g$. Replacing $\alpha$ with $\alpha'$ and
$k$ with $k'$ we can thus assume that $g=1$.

To determine the value of $a$ it is now enough to consider the case when
$A=\Z$ as $\Gamma^n_A(B)={\Gamma^n_{\Z}(\Z[x_1,x_2,\dots,x_r])\otimes_{\Z} A}$
and $a$ is unique. This also shows that $a$ is the image of an integer in $\Z$.
Multiplying both sides of equation~\eqref{E:id_elem_Newton-eq1} with
$\ell!=\ell_1!\ell_2!\dots\ell_s!$ we obtain
$$\left(\bigtimes_{i=1}^s f_i^{\times \ell_i}\right) \times \gammaid{n-|\ell|}
- (-1)^{|\ell|-k} a' \gamma^k(x^\alpha)\times\gammaid{n-k}
\in \Gamma^n_A(B)_\ltdeg{k\alpha}$$
with $a'=\ell!a$. As $B$ is a free $A$-module so is $\Gamma^d_A(B)$
by paragraph~\pref{X:Gamma_basis}. Thus $\ell!$ is not a zero divisor in
$\Gamma^n_A(B)$ and it is enough to verify
that $a'=\bigl(|\ell|-1\bigr)!k$. Replacing $\ell$ and $f$ with
$\ell'=1^{|\ell|}$ and $f'=(f_1,f_1,f_1,\dots,f_s,f_s)$ we can thus assume that
$\ell=1^{|\ell|}$. Further, using
Proposition~\pref{P:generalized_classical_NR} it is enough to show that
$$\left(\bigtimes_{i=1}^s f_i\right) \times \gammaid{n-s}
- (-1)^{s-1} (s-1)! \gamma^1(x^{k\alpha})\times\gammaid{n-1}
\in \Gamma^n_A(B)_\ltdeg{k\alpha}.$$
This is obvious for $s=1$. For $s>1$ we have by induction on $s$ that
\begin{align*}
\left(\bigtimes_{i=1}^s f_i\right) \times \gammaid{n-s}
&= \bigl(f_1\times\dots\times f_{s-1}\times\gammaid{n-(s-1)}\bigr)
   \bigl(f_s\times\gammaid{n-1}\bigr)\\
&\quad- \sum_{i=1}^{s-1} f_1\times f_2\times\dots
    \times f_i f_s\times\dots\times f_{s-1}\times\gammaid{n-(s-1)}\\
&= -(s-1) (-1)^{s-2} (s-2)! \gamma^1(x^{k\alpha})\times\gammaid{n-1}
    +\Gamma^n_A(B)_\ltdeg{k\alpha}
\end{align*}
which completes the proof.
\end{proof}

\begin{corollary}\label{C:sharp_contraction_coefficients_<=d-case}
Let $x^{k\alpha}$ be a monomial such that $g=\gcd(\alpha)$ is invertible in
$A$ and let $n\geq k|\alpha|$. Then there exists a unique element
$a\in \Z\cdot 1_A\subseteq A$ such that
$$(a/g)\gamma^k(x^\alpha)\times\gammaid{n-k} - e_{k\alpha} \in
\Gamma^n_A(B)_\ltdeg{k\alpha}.$$
For every prime $p\in\primes(A)$ we have that
$$\ord_p(a)=\ord_p ((k\alpha))-\ord_p\bigl(|\alpha|\bigr).$$
\end{corollary}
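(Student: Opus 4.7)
The plan is to derive the corollary as the specialization of Proposition~\pref{P:identities_between_elem_and_Newton} to the tautological factorization $x^{k\alpha}=\prod_{i=1}^{r} x_i^{k\alpha_i}$, combined with Proposition~\pref{P:uniqueness_of_contraction_coeff} for the uniqueness assertion.

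Concretely, I would take $s=r$, $f_i=x_i$ and $\ell_i=k\alpha_i$ (omitting indices where $\alpha_i=0$, which contribute trivially via $\gamma^0(x_i)=1$) in Proposition~\pref{P:identities_between_elem_and_Newton}. Then $|\ell|=k|\alpha|$, $((\ell))=((k\alpha))$, and $\gamma^\ell(f)\times\gammaid{n-|\ell|}$ is literally $e_{k\alpha}$. The proposition yields $m=((k\alpha))\cdot kg/(k|\alpha|)=g\,((k\alpha))/|\alpha|\in\Z$ together with the relation
$$e_{k\alpha}-(-1)^{k|\alpha|-k}(m/g)\,\gamma^k(x^\alpha)\times\gammaid{n-k}\in\Gamma^n_A(B)_\ltdeg{k\alpha}.$$
Setting $a:=(-1)^{k|\alpha|-k}m\in\Z$ and rearranging produces exactly the relation in the statement, with $a$ manifestly in $\Z\cdot 1_A$. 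Uniqueness of the coefficient $a/g\in A$ is immediate from Proposition~\pref{P:uniqueness_of_contraction_coeff} applied to the difference of two candidates, and this pins down $a$ itself since $g$ is invertible in $A$.

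For the formula on $p$-adic valuations, I compute
$$\ord_p(a)=\ord_p(m)=\ord_p(g)+\ord_p(((k\alpha)))-\ord_p(|\alpha|)$$
for $p\in\primes(A)$. The hypothesis that $g\cdot 1_A$ is a unit in $A$ while $p\cdot 1_A$ is not forces $p\nmid g$ in $\Z$: otherwise $p\cdot 1_A$ would be a factor in $A$ of the unit $g\cdot 1_A$, hence itself a unit. Hence $\ord_p(g)=0$ and the claimed identity $\ord_p(a)=\ord_p(((k\alpha)))-\ord_p(|\alpha|)$ follows.

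There is no real obstacle here: Propositions~\pref{P:identities_between_elem_and_Newton} and~\pref{P:uniqueness_of_contraction_coeff} have already done the substantive work, and the corollary is essentially a matter of tracking the sign $(-1)^{k|\alpha|-k}$ and observing that primes in $\primes(A)$ must be coprime to the integer $g$.
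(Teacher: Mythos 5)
Your proof is correct and matches the paper's own argument: both specialize Proposition~\pref{P:identities_between_elem_and_Newton} to the tautological factorization giving $e_{k\alpha}$, read off $a=(-1)^{k|\alpha|-k}((k\alpha))\,g/|\alpha|$, and deduce the valuation formula using that $p\in\primes(A)$ cannot divide the invertible $g$. You spell out the uniqueness step via Proposition~\pref{P:uniqueness_of_contraction_coeff} and the observation $\ord_p(g)=0$ a bit more explicitly than the paper, but the route is the same.
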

\begin{proof}
From Proposition~\pref{P:identities_between_elem_and_Newton} it follows
that $a=(-1)^{k|\alpha|-k}((k\alpha))\frac{kg}{k|\alpha|}$ and thus that
$\ord_p(a)=\ord_p ((k\alpha))-\ord_p\bigl(|\alpha|\bigr)$.
\end{proof}

We are now able to completely characterize the cases in which the elementary
symmetric polynomials generate all elements of total degree $\leq d$ in
$\Gamma^d_A(B)$.

\begin{lemma}\label{L:elementary_gen_low_deg_Newton}
Let $x^{k\alpha}$ be a monomial in $B$ such that $\gcd(\alpha)$ is invertible
in $A$ and $k|\alpha|\leq d$. Then the following statements are equivalent:
\begin{enumerate}
\item $A\bigl[\Gamma^d_A(B)_{\beta,\beta\leq k\alpha}\bigr]
\subseteq \Gamma^d_A(B)$ is
generated by $\Gamma^d_A(B)_\ltdeg{k\alpha}$ and $e_{k\alpha}$.
\label{EN:L:low_deg_1}
\item $\gamma^k(x^\alpha)\times \gammaid{d-k}\in
\Gamma^d_A(B)_\ltdeg{k\alpha}+Ae_{k\alpha}$.
\label{EN:L:low_deg_2}
\item $\ord_p\;((k\alpha))-\ord_p\bigl(|\alpha|\bigr)=0$ for every
$p\in\primes(A)$ such that $p \leq d$.
\label{EN:L:low_deg_3}
\end{enumerate}
\end{lemma}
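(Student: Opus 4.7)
The plan is to chain two equivalences. I will show $(i)\Leftrightarrow(ii)$ via multidegree considerations applied to Corollary~\pref{C:gen_of_Gamma_in_one_degree}, and then $(ii)\Leftrightarrow(iii)$ by combining the explicit relation of Corollary~\pref{C:sharp_contraction_coefficients_<=d-case} with the uniqueness statement of Proposition~\pref{P:uniqueness_of_contraction_coeff}.

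For $(i)\Leftrightarrow(ii)$, I would recall that Corollary~\pref{C:gen_of_Gamma_in_one_degree} already realizes the subalgebra in $(i)$ as generated by $\Gamma^d_A(B)_\ltdeg{k\alpha}$ together with $\gamma^k(x^\alpha)\times\gammaid{d-k}$. Since the $\N^r$-grading is respected by multiplication and $e_{k\alpha}$ is homogeneous of multidegree $k\alpha$, the multidegree-$k\alpha$ part of the subalgebra generated by $\Gamma^d_A(B)_\ltdeg{k\alpha}\cup\{e_{k\alpha}\}$ is exactly $\bigl(\Gamma^d_A(B)_\ltdeg{k\alpha}\bigr)_{k\alpha}+Ae_{k\alpha}$. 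Hence the two subalgebras coincide if and only if $(ii)$ holds.

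For the implication $(ii)\Rightarrow$ ``$a$ is a unit in $A$'', I would apply Corollary~\pref{C:sharp_contraction_coefficients_<=d-case} with $n=d$ (permissible since $k|\alpha|\leq d$) to produce $a\in\Z\cdot 1_A$ with
$$(a/g)\gamma^k(x^\alpha)\times\gammaid{d-k}-e_{k\alpha}\in\Gamma^d_A(B)_\ltdeg{k\alpha}.$$
Rewriting $(ii)$ as $\gamma^k(x^\alpha)\times\gammaid{d-k}\equiv ce_{k\alpha}$ modulo $\Gamma^d_A(B)_\ltdeg{k\alpha}$ for some $c\in A$ and substituting yields
$$\bigl(1-ca/g\bigr)\gamma^k(x^\alpha)\times\gammaid{d-k}\in\Gamma^d_A(B)_\ltdeg{k\alpha},$$
so Proposition~\pref{P:uniqueness_of_contraction_coeff} forces $1-ca/g=0$ in $A$. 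Thus $a/g$, and hence $a$, is a unit in $A$; conversely, if $a$ is a unit then multiplying the above relation by $g/a$ immediately recovers $(ii)$.

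It remains to translate ``$a$ is invertible in $A$'' into $(iii)$. Since $a\in\Z\cdot 1_A$, invertibility is equivalent to $\ord_p(a)=0$ for every $p\in\primes(A)$, and the formula $\ord_p(a)=\ord_p\;((k\alpha))-\ord_p(|\alpha|)$ from Corollary~\pref{C:sharp_contraction_coefficients_<=d-case} can be substituted directly. The restriction to $p\leq d$ in $(iii)$ is then a free bonus: when $p>d$, every factor in $((k\alpha))=(k|\alpha|)!/\prod(k\alpha_i)!$ and in $|\alpha|$ is bounded by $d<p$, so both $p$-adic valuations vanish automatically. The main obstacle is the middle step, since $(ii)$ a priori places $\gamma^k(x^\alpha)\times\gammaid{d-k}$ only inside an $A$-submodule; deducing genuine invertibility of $a$ rather than a weaker divisibility statement is precisely what Proposition~\pref{P:uniqueness_of_contraction_coeff} is designed to ensure.
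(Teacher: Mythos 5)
Your argument is correct and follows essentially the same route as the paper, which dispatches $(i)\Leftrightarrow(ii)$ via Corollary~\pref{C:gen_of_Gamma_in_one_degree} and $(ii)\Leftrightarrow(iii)$ via Corollary~\pref{C:sharp_contraction_coefficients_<=d-case}; you simply fill in the details, in particular making explicit that Proposition~\pref{P:uniqueness_of_contraction_coeff} is what upgrades the containment in~$(ii)$ to genuine invertibility of the coefficient~$a$, and that the valuations automatically vanish for $p>d$.
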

\begin{proof}
\enumref{EN:L:low_deg_1}$\iff$\enumref{EN:L:low_deg_2} by
Corollary~\pref{C:gen_of_Gamma_in_one_degree}.
\enumref{EN:L:low_deg_2}$\iff$\enumref{EN:L:low_deg_3} follows from
Corollary~\pref{C:sharp_contraction_coefficients_<=d-case}.
\end{proof}

\begin{proposition}\label{P:elementary_gen_low_deg_Newton}
The subalgebra $A\bigl[\Gamma^d_A(B)_{\beta,|\beta|\leq d}]\subseteq
\Gamma^d_A(B)$ is generated by the elementary multisymmetric polynomials
$(e_\alpha)_{|\alpha|\leq d}$
where $e_\alpha=\gamma^\alpha(x)\times\gammaid{d-|\alpha|}$, if and only
if one of the following conditions is satisfied
\begin{enumerate}
\item $r=1$.\label{EN:P:low_deg_1}
\item $r=2$ and $d=3$.\label{EN:P:low_deg_2}
\item $r=2$, $d=4$ and $3$ is invertible in $A$.\label{EN:P:low_deg_3}
\item $(d-1)!$ is invertible in $A$.\label{EN:P:low_deg_4}
\end{enumerate}
\end{proposition}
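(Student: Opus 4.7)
The plan is to reformulate the statement as an arithmetic condition using Lemma~\pref{L:elementary_gen_low_deg_Newton} and then verify it by case analysis. By Corollary~\pref{C:Newton_generators2} applied to the collection $\calC_2$, together with Corollary~\pref{C:gen_of_Gamma_in_one_degree}, the subalgebra $A\bigl[\Gamma^d_A(B)_{\beta,|\beta|\leq d}\bigr]$ is built inductively on the multidegree by adjoining $\gamma^k(x^\alpha)\times\gammaid{d-k}$ for each admissible pair, meaning $\gcd(\alpha)=1$, $\alpha\neq 0$, and $k|\alpha|\leq d$. By Lemma~\pref{L:elementary_gen_low_deg_Newton}, this element lies in $A[e_\alpha:|\alpha|\leq d]$ modulo strictly smaller multidegrees precisely when $\ord_p((k\alpha))=\ord_p(|\alpha|)$ for every $p\in\primes(A)$ with $p\leq d$. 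The proposition is therefore equivalent to the arithmetic condition $(*)$: the above equality holds for every admissible pair $(k,\alpha)$ and every such prime $p$.

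For sufficiency I would verify $(*)$ in each of the four cases. Case (i) is immediate since $r=1$ and $\gcd(\alpha)=1$ force $\alpha=(1)$. In case (iv), invertibility of $(d-1)!$ removes every $p\leq d-1$ from $\primes(A)$, so only $p=d$ is relevant and must be prime; then either $k|\alpha|<p$ (both orders vanish) or $k|\alpha|=p$ with $(k,|\alpha|)\in\{(1,p),(p,1)\}$, and in each subcase Kummer's theorem combined with $\gcd(\alpha)=1$ (which prevents all the mass of $k\alpha$ being concentrated in a single coordinate unless $k=p$) yields the equality. Cases (ii) and (iii) reduce to finite enumerations of admissible pairs; the only nontrivial check in (iii) is $(k,\alpha)=(2,(1,1))$, where $\ord_2((2,2))=\ord_2(2)=1$ and the potential failure at $p=3$ is removed by the invertibility hypothesis.

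For necessity, assume none of (i)--(iv) holds: then $r\geq 2$, some prime $p\leq d-1$ lies in $\primes(A)$, and $(r,d)$ avoids the sporadic settings of (ii) and (iii). I would exhibit a violating admissible pair according to $p$. For $p=2$ take $(1,(1,1,1,0,\dots,0))$ when $r\geq 3$ (giving $\binom{3}{1,1,1}=6$, so $\ord_2=1\neq 0=\ord_2(3)$) and $(1,(3,2))$ when $r=2$, $d\geq 5$. For $p=3$ take $(1,(1,1,2,0,\dots,0))$ when $r\geq 3$, $d\geq 4$ and $(2,(1,1))$ when $r=2$, $d\geq 4$. For $p\geq 5$ the pair $\bigl((p+1)/2,(1,1,0,\dots,0)\bigr)$ works, since $k|\alpha|=p+1\leq d$ while $(p+1)/2<p$ yet the sum $(p+1)/2+(p+1)/2=p+1$ carries once in base $p$, giving $\ord_p\binom{p+1}{(p+1)/2}=1\neq 0=\ord_p(2)$. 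The remaining small configurations $r=2$, $d\in\{3,4\}$ are precisely absorbed into (ii) and (iii). The main obstacle is exactly this bookkeeping in the necessity direction: one must confirm that the sporadic exceptions capture all and only the configurations where the constraints $\gcd(\alpha)=1$ and $k|\alpha|\leq d$ are too tight to permit a carry-inducing pair for some prime $p\leq d-1$.
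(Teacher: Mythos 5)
Your proof is correct and follows essentially the same route as the paper: both reduce to the arithmetic condition $\ord_p((k\alpha))=\ord_p(|\alpha|)$ via Lemma~\pref{L:elementary_gen_low_deg_Newton} and then argue sufficiency and necessity by a case analysis on primes. The only cosmetic difference is in the witnesses for necessity: the paper uses the single family $\alpha=(p-1,2)$ for all primes $2<p<d$, whereas you split into a $p=3$ witness and the family $\bigl((p+1)/2,(1,1)\bigr)$ for $p\geq 5$, which works equally well.
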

\begin{proof}
When $r=1$ it is well known that $\Gamma^d_A(B)$ is the polynomial ring
$A[e_1,e_2,\dots,e_d]$ which shows that \enumref{EN:P:low_deg_1} is sufficient.

By Lemma~\pref{L:elementary_gen_low_deg_Newton}
and induction on $|\alpha|$ it is enough to check that
$$\ord_p\;((k\alpha))-\ord_p\bigl(|\alpha|\bigr)=0$$
for every monomial $x^{k\alpha}$ and $p\in\primes(A)$ such that
$p\leq d$, $k|\alpha|\leq d$ and $\gcd(\alpha)$ is invertible in $A$.

If $d$ itself is a prime in $\primes(A)$ then this prime has not to be
checked. In fact, if $d\divides k\alpha$ then $d=k$ and
$\alpha=\basis{i}$ for some $i$ giving $\ord_d\;((k\basis{i}))-\ord_d(1)=0$.
If $d\notdivide k\alpha$ then either $k|\alpha|<d$ which gives
$\ord_d\;((k\alpha))-\ord_d\bigl(|\alpha|\bigr)=0-0=0$
or $k=1$ and $|\alpha|=d$ which gives
$\ord_d\;((\alpha))-\ord_d\bigl(|\alpha|\bigr)=1-1=0$.
It is thus sufficient that $(d-1)!$ is invertible which is condition
\enumref{EN:P:low_deg_4}.

If a prime $2<p<d$ is not invertible and $r\geq 2$ then
$$\gamma^{1}(x_1^{p-1}x_2^2)\times \gammaid{d-1}=
\gamma^1(x^\alpha)\times \gammaid{d-1}$$
with $\alpha=(p-1)\basis{1}+2\basis{2}$ is an element which is not generated by
elementary multisymmetric polynomials. In fact
$\ord_p\;((\alpha))-\ord_p\bigl(|\alpha|\bigr)=1-0=1$.

It is thus necessary that every prime $p<d$, except possibly $2$, is
invertible. Therefore we need only consider the case where $2$ is not
invertible and $d\geq 3$. If $r\geq 3$ then
$$\gamma^1(x_1x_2x_3)\times \gammaid{d-1}$$
is not generated by the $e_\alpha$:s as
$\ord_2\;((1,1,1))-\ord_2(1+1+1)=1-0=1$. If $r=2$ and $d\geq 5$ then
$$\gamma^1(x_1^3x_2^2)\times \gammaid{d-1}$$
is not generated by the $e_\alpha$:s since
$\ord_2\;((3,2))-\ord_2(3+2)=1-0=1$. Finally if $r=2$ and $d\leq 4$ then
$\ord_2\;((\alpha))-\ord_2\bigl(|\alpha|\bigr)=0$ for all
$\alpha\in \{(1,1),(2,1),(3,1)\}$ and this completes the proof of the
proposition.
\end{proof}


We will now show the main theorem of this section. It gives a \emph{minimal}
generator set for $\Gamma^d_A(B)$ where $A$ is any ring and
improves~\cite[Thm. 1]{vaccarino_gen_TS} also when $A=\F_p$ and $d=p^s$. A
sharp bound on the total degree for any $A$ is given in
Corollary~\pref{C:sharp_total_degree_bound}.

\begin{theorem}\label{T:Gamma_generators}
Let $\calC$ be one of the two collections of
Corollary~\pref{C:Newton_generators2}.
Let $\widetilde{\calC}$ be the subset of $\calC$ such that
$(k,x^\alpha)\in\widetilde{\calC}$ if either $k|\alpha|\leq d$
or $\Qp{k\alpha}\leq\Qp{d}$ for some $p\in\primes(A)$.

The $A$-algebra $\Gamma^d_A(B)$ is then generated by
$\bigl(\gamma^k(x^\alpha)\times\gammaid{d-k}\bigr)_{(k,\alpha)
\in\widetilde{\calC}}$ and
this is a minimal set of generators.
\end{theorem}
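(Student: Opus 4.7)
Since $\calC$ has exactly one entry of each multidegree $k\alpha\in\N^r\setminus 0$ and the generators $g_{k,\alpha}:=\gamma^k(x^\alpha)\times\gammaid{d-k}$ are multidegree-homogeneous, Corollary~\pref{C:Newton_generators2} together with an induction on multidegree reduces the theorem to proving, for each $(k,\alpha)\in\calC$,
$$g_{k,\alpha}\notin\Gamma^d_A(B)_\ltdeg{k\alpha}\iff (k,\alpha)\in\widetilde{\calC}.$$
I would study the cyclic $A$-module
$$Q_d:=\Gamma^d_A(B)_{k\alpha}\bigm/\bigl(\Gamma^d_A(B)_\ltdeg{k\alpha}\cap\Gamma^d_A(B)_{k\alpha}\bigr),$$
which by Corollary~\pref{C:gen_of_Gamma_in_one_degree} is generated by the class of $g_{k,\alpha}$; thus $Q_d\simeq A/J$ for some ideal $J\subseteq A$, and the desired condition becomes $J\neq A$.

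The central step is to identify $J$ explicitly. Fix $n\geq\max(d,k|\alpha|)$ and form $Q_n$ analogously for $\Gamma^n_A(B)$; Proposition~\pref{P:uniqueness_of_contraction_coeff} gives $Q_n=A$, freely generated by $\overline{\gamma^k(x^\alpha)\times\gammaid{n-k}}$. The homomorphism $\rho^n_d$ of paragraph~\pref{X:rho^e_d} preserves multidegree and, since it surjects $\Gamma^n_A(B)_\beta$ onto $\Gamma^d_A(B)_\beta$ for every $\beta$, restricts to a surjection on the $\ltdeg{k\alpha}$-subalgebras in each multidegree. A short diagram chase then yields
$$Q_d=Q_n\bigm/\overline{\ker(\rho^n_d)\cap\Gamma^n_A(B)_{k\alpha}}.$$
By the explicit formula of paragraph~\pref{X:rho^e_d}, this kernel is the $A$-span of the basis elements $\msb{\nu}\times\gammaid{n-|\nu|}$ with $\nu\in\calS_{k\alpha,d}$ (all of which satisfy $|\nu|\leq k|\alpha|\leq n$). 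Applying Proposition~\pref{P:identities_between_elem_and_Newton} to each such $\nu$ identifies the image of the corresponding basis element in $Q_n=A$ with $(-1)^{|\nu|-k}a_\nu$, where $a_\nu\in A$ is the coefficient produced by that proposition (equal to $((\nu))k/|\nu|$ when $\gcd(\alpha)=1$, and in general the integer $((\nu))kg/|\nu|$ multiplied by $g^{-1}\in A$, with $g=\gcd(\alpha)$ invertible). Hence $J=\bigl(a_\nu\;:\;\nu\in\calS_{k\alpha,d}\bigr)$.

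It remains to determine when $J\neq A$, which I would do by inspecting residue fields. If $k|\alpha|\leq d$ then $\calS_{k\alpha,d}=\emptyset$, so $J=0\neq A$; this recovers Corollary~\pref{C:low-deg-necessary}. If $k|\alpha|>d$ then $J\neq A$ iff some maximal ideal $\mathfrak{m}\subseteq A$ contains every $a_\nu$; residue fields of characteristic zero impose no obstruction since each $a_\nu$ is the product of a nonzero positive integer and a unit, so only residue characteristics $p>0$ matter, and any such $p$ lies in $\primes(A)$. For such a $p$, Lemma~\pref{L:Qp_and_multinomials} yields $\ord_p(a_\nu)=\ord_p\bigl(p^s((\nu))/|\nu|\bigr)$ with $s=\ord_p\gcd(k\alpha)$, and Main Lemma~\pref{ML:min_order} then translates the condition $\min_\nu\ord_p(a_\nu)>0$ (that is, $p\mid a_\nu$ for every $\nu\in\calS_{k\alpha,d}$) into $\Qp{k\alpha}\leq\Qp{d}$. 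Combining the two cases, $J\neq A$ precisely when $(k,\alpha)\in\widetilde{\calC}$. The main obstacle I anticipate is the diagram chase producing the identification $Q_d=Q_n\bigm/\overline{\ker(\rho^n_d)\cap\Gamma^n_A(B)_{k\alpha}}$, which hinges on the surjectivity of $\rho^n_d$ on the $\ltdeg{k\alpha}$-subalgebras in each multidegree; everything else is a direct application of the preparatory results of this section together with the Main Lemma.
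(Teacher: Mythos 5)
Your proof is correct and follows essentially the same route as the paper's: reduce via Corollary~\pref{C:Newton_generators2} to the membership question $g_{k,\alpha}\in\Gamma^d_A(B)_\ltdeg{k\alpha}$, lift via $\rho^n_d$ to $\Gamma^n_A(B)$ with $n\geq k|\alpha|$, convert via Propositions~\pref{P:identities_between_elem_and_Newton} and~\pref{P:uniqueness_of_contraction_coeff} into a divisibility statement about the integers $k((\nu))/|\nu|$ with $\nu\in\calS_{k\alpha,d}$, and apply Main Lemma~\pref{ML:min_order}; your packaging via the cyclic quotient $Q_d\simeq A/J$ is just a tidy reformulation of the paper's Lemma~\pref{L:sharp_contraction_coefficients}. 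The diagram chase you flagged does go through, because $\rho^n_d$ is a multidegree-preserving $A$-algebra surjection and therefore maps $\Gamma^n_A(B)_\ltdeg{k\alpha}\cap\Gamma^n_A(B)_{k\alpha}$ onto $\Gamma^d_A(B)_\ltdeg{k\alpha}\cap\Gamma^d_A(B)_{k\alpha}$ (lift each generator of smaller multidegree separately and multiply).
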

\begin{proof*}
By Corollary~\pref{C:Newton_generators2}, the elements
$\gamma^k(x^\alpha)\times\gammaid{d-k}$ with
$(k,\alpha)\in\calC$ generate $\Gamma^d_A(B)$. As every multidegree occurs
exactly once in $\calC$ it is clear that we get a minimal set of
generators by taking those $\gamma^k(x^\alpha)\times\gammaid{d-k}$ which cannot
be written as sums of products of elements of strictly smaller multidegree,
i.e. $\gamma^k(x^\alpha)\times\gammaid{d-k}$ is in the minimal set if and only
if $\gamma^k(x^\alpha)\times\gammaid{d-k}\notin\Gamma^d_A(B)_\ltdeg{k\alpha}$.

If $k|\alpha|\leq d$ then Corollary~\pref{C:low-deg-necessary}
shows that $\gamma^k(x^\alpha)\times\gammaid{d-k}
\notin\Gamma^d_A(B)_\ltdeg{k\alpha}$.
If $k|\alpha|>d$ and $\gamma^k(x^\alpha)\times\gammaid{d-k}
\in\Gamma^d_A(B)_\ltdeg{k\alpha}$ then
we lift the corresponding relation in $\Gamma^d_A(B)$ to $\Gamma^n_A(B)$,
where $n=k|\alpha|$, using the homomorphism
$\surjmap{\rho^n_d}{\Gamma^n_A(B)}{\Gamma^d_A(B)}$ defined in
paragraph~\pref{X:rho^e_d} and obtain
\begin{equation}\label{E:GG}
\gamma^k(x^\alpha)\times\gammaid{n-k} = 
\sum_{\substack{\nu\in \N^{(\monpos)}\\d+1\leq |\nu|\leq n}} a_\nu \msb{\nu}
\times\gammaid{n-|\nu|}
+ \Gamma^n_A(B)_\ltdeg{k\alpha}
\end{equation}
for some $a_\nu\in A$. Conversely, if there exist $a_\nu\in A$ such that
relation~\eqref{E:GG} holds then
$\gamma^k(x^\alpha)\times\gammaid{d-k}\in\Gamma^d_A(B)_\ltdeg{k\alpha}$.
The theorem now follows from the following lemma:
\end{proof*}

\begin{lemma}\label{L:sharp_contraction_coefficients}
Let $x^{k\alpha}$ be a monomial such that $\gcd(\alpha)$ is invertible in
$A$. Let $d$ and $n$ be positive integers such that $d<k|\alpha|\leq n$. Then
there exists a relation in $\Gamma^{n}_A(B)$ of the form
\begin{equation}\label{E:scc-1}
\gamma^k(x^\alpha)\times\gammaid{n-k} = 
\sum_{\substack{\nu\in\N^{(\monpos)}\\d<|\nu|\leq n}} a_\nu \msb{\nu}
\times\gammaid{n-|\nu|}
+ \Gamma^n_A(B)_\ltdeg{k\alpha}
\end{equation}
where $a_\nu\in A$ are almost all zero, if and only if $\Qp{k\alpha}>\Qp{d}$
for every prime $p\in\primes(A)$.
\end{lemma}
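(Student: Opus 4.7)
The strategy is to set up a dictionary (valid in $\Gamma^n_A(B)$) between the basis elements $\msb{\nu}\times\gammaid{n-|\nu|}$ indexed by factorizations $\nu\in\calS_{k\alpha,d}$ and $A$-multiples of $\gamma^k(x^\alpha)\times\gammaid{n-k}$, and then to translate the statement of the lemma into a question about the unit ideal of $A$ that Main~Lemma~\pref{ML:min_order} can answer. Concretely, Proposition~\pref{P:identities_between_elem_and_Newton} gives, for every $\nu\in\calS_{k\alpha,d}$, a congruence
$$\msb{\nu}\times\gammaid{n-|\nu|}\equiv (-1)^{|\nu|-k}\,c_\nu\,\gamma^k(x^\alpha)\times\gammaid{n-k}\pmod{\Gamma^n_A(B)_\ltdeg{k\alpha}}$$
where $c_\nu\in A$ is characterised by $c_\nu\cdot g=m_\nu:=((\nu))kg/|\nu|\in\Z$, with $g=\gcd(\alpha)$ invertible in $A$. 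Because $g\cdot 1_A\in A^\times$, no prime in $\primes(A)$ divides $g$, so for such $p$ one has $\ord_p(g)=0$ and hence $\ord_p(m_\nu)=\ord_p(p^{\ord_p(k)}((\nu))/|\nu|)$---exactly the quantity controlled on $\calS_{k\alpha,d}$ by Main~Lemma~\pref{ML:min_order} applied to the multi-index $k\alpha$ (for which $\ord_p\gcd(k\alpha)=\ord_p(k)$). In particular, $c_\nu$ lies in a maximal ideal $\mathfrak{m}\subseteq A$ of residue characteristic $p>0$ if and only if $p\divides m_\nu$.

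For the ``if'' direction, assume $\Qp{k\alpha}>\Qp{d}$ for every $p\in\primes(A)$. I would show that $(c_\nu)_{\nu\in\calS_{k\alpha,d}}$ generates the unit ideal of $A$. At any maximal ideal $\mathfrak{m}$ of residue characteristic $0$, the set $\calS_{k\alpha,d}$ is non-empty (as $d<k|\alpha|$) and every $m_\nu$ is a positive integer, so $c_\nu\notin\mathfrak{m}$. At any $\mathfrak{m}$ of residue characteristic $p>0$, necessarily $p\in\primes(A)$; the hypothesis together with Main~Lemma~\pref{ML:min_order} then supplies some $\nu\in\calS_{k\alpha,d}$ with $\ord_p(m_\nu)=0$, whence $c_\nu\notin\mathfrak{m}$. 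Writing $1=\sum_i b_i\,c_{\nu_i}$ in $A$, multiplying through by $\gamma^k(x^\alpha)\times\gammaid{n-k}$, and applying the dictionary to each summand yields a relation of the required form~\pref{E:scc-1}.

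Conversely, assume~\pref{E:scc-1}. Since $\Gamma^n_A(B)_\ltdeg{k\alpha}$ is $\N^r$-graded, projecting both sides of~\pref{E:scc-1} onto the multidegree-$k\alpha$ summand restricts the sum to $\nu$ with $\sum_\beta\nu_\beta\beta=k\alpha$, i.e.\ to $\nu\in\calS_{k\alpha,d}$. Substituting the dictionary collapses this relation to
$$(1-c)\,\gamma^k(x^\alpha)\times\gammaid{n-k}\in\Gamma^n_A(B)_\ltdeg{k\alpha},\qquad c=\sum_{\nu\in\calS_{k\alpha,d}}(-1)^{|\nu|-k}\,a_\nu\,c_\nu\in A,$$
and Proposition~\pref{P:uniqueness_of_contraction_coeff} forces $c=1$ in $A$. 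If some $p\in\primes(A)$ satisfied $\Qp{k\alpha}\leq\Qp{d}$, Main~Lemma~\pref{ML:min_order} would give $\ord_p(m_\nu)\geq 1$, and hence $c_\nu\in\mathfrak{m}$, for every $\nu\in\calS_{k\alpha,d}$ and every maximal ideal $\mathfrak{m}$ of residue characteristic $p$; then $c\in\mathfrak{m}$, contradicting $c=1$.

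The step I expect to be most delicate is the $p$-adic bookkeeping that identifies the integer $m_\nu$ from Proposition~\pref{P:identities_between_elem_and_Newton} with the expression $p^{\ord_p(k)}((\nu))/|\nu|$ appearing in Main~Lemma~\pref{ML:min_order}; this identification rests crucially on the invertibility of $\gcd(\alpha)$ in $A$ forcing $\ord_p(g)=0$ for $p\in\primes(A)$. Once this bridge is in place, both directions reduce to routine unit-ideal/maximal-ideal reasoning combined with invocations of Propositions~\pref{P:identities_between_elem_and_Newton} and~\pref{P:uniqueness_of_contraction_coeff}.
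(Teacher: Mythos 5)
Your proposal is correct and follows essentially the same route as the paper: restrict to the multidegree-$k\alpha$ component, use Proposition~\pref{P:identities_between_elem_and_Newton} to replace each $\msb{\nu}\times\gammaid{n-|\nu|}$ with a multiple $c_\nu$ of $\gamma^k(x^\alpha)\times\gammaid{n-k}$, invoke Proposition~\pref{P:uniqueness_of_contraction_coeff} to turn the solvability of~\pref{E:scc-1} into the equation $1=\sum a_\nu c_\nu$ in $A$, and then translate the resulting unit condition into the statement about $\Qp{k\alpha}$ and $\Qp{d}$ via Main~Lemma~\pref{ML:min_order}. Your unit-ideal/maximal-ideal bookkeeping simply makes explicit the step the paper states tersely as ``we can choose $a_\nu\in\Z$ such that $\sum_\nu a_\nu c_\nu$ is invertible in $A$''.
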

\begin{proof}
We can assume that every term $\msb{\nu}\times\gammaid{n-|\nu|}$ has
multidegree $k\alpha$. The sum is then over the set $\calS_{k\alpha,d}$
of Definition~\pref{D:calS}.
Proposition~\pref{P:identities_between_elem_and_Newton} gives that
$$\msb{\nu}\times\gammaid{n-|\nu|}
-c_\nu\gamma^k(x^\alpha)\times\gammaid{n-k}
\in\Gamma^n_A(B)_\ltdeg{k\alpha}$$
where $c_\nu=(-1)^{|\nu|-k}k((\nu))/|\nu|$. Thus
\begin{equation}\label{E:contracted_identity}
(1-\sum_{\nu\in\calS_{k\alpha,d}} a_\nu c_\nu)
\bigl(\gamma^k(x^\alpha)\times\gammaid{n-k}\bigr)
\in \Gamma^n_A(B)_\ltdeg{k\alpha}
\end{equation}
if and only if~\eqref{E:scc-1} holds. Moreover,
relation~\eqref{E:contracted_identity} is equivalent to
$1=\sum_{\nu} a_\nu c_\nu$ in $A$ by
Proposition~\pref{P:uniqueness_of_contraction_coeff}. If
$1=\sum_{\nu} a_\nu c_\nu$ then for every $p\in\primes(A)$ there
exists a $\nu\in\calS_{k\alpha,d}$ such that $\ord_p(c_\nu)=0$.
Conversely, if this is the case we can choose $a_\nu\in\Z$ such
that $\sum_{\nu} a_\nu c_\nu$ is invertible in $A$.
By Main Lemma~\pref{ML:min_order} the existence of a $\nu$ such that
$\ord_p(c_\nu)=0$ is equivalent to $\Qp{k\alpha}>\Qp{d}$. This concludes the
proof of the lemma.
\end{proof}

\end{section}


\begin{section}{Remarks and applications}\label{S:applications}

\begin{remark}\label{R:Gamma_generators_only_p<=d}
In Theorem~\pref{T:Gamma_generators} it is enough to consider non-invertible
primes $\leq d$ since if $p>d$ then $\Qp{k\alpha}>\Qp{d}=d$ for any
$k|\alpha|>d$.
\end{remark}

\begin{remark}
Let us extend the definition of $\Qp{n}$ to include $p=\infty$ with
$\Qpol{\infty}{n}=n\in \Z[t]$.
We can then replace the condition in Theorem~\pref{T:Gamma_generators}
with: $(k,\alpha)\in\widetilde{\calC}$ if and only if
$\Qp{k\alpha}\leq \Qp{d}$ for some $p\in\primes(A)\cup\{\infty\}$.
\end{remark}

\begin{remark}
We note that it immediately follows from Theorem~\pref{T:Gamma_generators} that
if $\gamma^k(x^\alpha)\times\gammaid{d-k}$ is in a minimal set of generators
then so is $\gamma^{k'}(x^{\alpha'})\times\gammaid{d-k'}$ for every
$k'\alpha'<k\alpha$.
\end{remark}

\begin{corollary}[{\cite{junker_93,noether_endlichkeitssatz,weyl_invariants,
nagata_chownorm,neeman,richman}}]\label{C:generators_in_char_0}
If $d!$ is invertible in $A$ then $\Gamma^d_A(A[x_1,\dots,x_r])$ is minimally
generated as an $A$-algebra by either the elementary multisymmetric polynomials
or the multisymmetric power sums of total degree $\leq d$.
\end{corollary}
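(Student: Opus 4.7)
\emph{Plan.} The plan is to specialize Theorem~\pref{T:Gamma_generators} to the case $d!\in A^\times$ and then convert the resulting minimal set of power sums into one of elementary multisymmetric polynomials by means of the multisymmetric Newton identities. Since $d!$ is invertible, every prime $p\leq d$ is invertible in $A$, so $\primes(A)\cap\{2,3,\dots,d\}=\emptyset$. By Remark~\pref{R:Gamma_generators_only_p<=d}, the second clause in the definition of $\widetilde{\calC}$ (the one involving $\Qp{k\alpha}\leq\Qp{d}$) contributes nothing, so $\widetilde{\calC_1}$ reduces to the pairs $(k,\alpha)\in\calC_1$ with $k|\alpha|\leq d$.

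Now in $\calC_1$ we require $k$ to be a product of primes in $\primes(A)$. Since $k\leq k|\alpha|\leq d$, any prime factor of $k$ is $\leq d$ and hence invertible, forcing $k=1$. The auxiliary constraint that $\gcd(\alpha)\cdot 1_A$ be invertible is automatic as $\gcd(\alpha)\leq d$. Hence Theorem~\pref{T:Gamma_generators} delivers the minimal generating set
$$\bigl\{p_\alpha=\gamma^1(x^\alpha)\times\gammaid{d-1}\,:\,1\leq|\alpha|\leq d\bigr\},$$
which is the power-sum half of the corollary.

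For the elementary multisymmetric polynomials, Corollary~\pref{C:elem_equiv_power_sums_low_deg} tells us that the subrings generated by $(p_\alpha)_{|\alpha|\leq d}$ and $(e_\alpha)_{|\alpha|\leq d}$ coincide, so $(e_\alpha)_{1\leq|\alpha|\leq d}$ also generates $\Gamma^d_A(B)$. For minimality, the multisymmetric Newton identities (Proposition~\pref{P:multi_symm_newton_ids}) applied with $\delta=\alpha$ yield
$$((\alpha))p_\alpha+(-1)^{|\alpha|}|\alpha|e_\alpha\in\Gamma^d_A(B)_\ltdeg{\alpha}.$$
Both $((\alpha))$ and $|\alpha|$ divide $d!$ and are therefore units in $A$, so $p_\alpha$ and $e_\alpha$ agree modulo $\Gamma^d_A(B)_\ltdeg{\alpha}$ up to a unit. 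The minimality of $\{p_\alpha\}$ established above forces $p_\alpha\notin\Gamma^d_A(B)_\ltdeg{\alpha}$, whence also $e_\alpha\notin\Gamma^d_A(B)_\ltdeg{\alpha}$; since distinct $e_\alpha$ have distinct multidegrees, no $e_\alpha$ can be omitted.

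The substantive work is entirely shouldered by Theorem~\pref{T:Gamma_generators}; the corollary is a bookkeeping specialization together with a unit-coefficient application of the Newton identities. The only point requiring a moment's care is paragraph two, where one must check that the restrictions defining $\calC_1$ collapse in this regime to the single case $k=1$.
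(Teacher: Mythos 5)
Your proof is correct and follows the same route as the paper's: specialize Theorem~\pref{T:Gamma_generators} via Remark~\pref{R:Gamma_generators_only_p<=d} to get the power sums, then invoke Corollary~\pref{C:elem_equiv_power_sums_low_deg} for the elementary polynomials. You spell out two points the paper leaves implicit---that the restrictions in $\calC_1$ collapse to $k=1$, and that minimality transfers from $(p_\alpha)$ to $(e_\alpha)$ because the Newton identity relates them by a unit modulo $\Gamma^d_A(B)_\ltdeg{\alpha}$---which is welcome detail but not a different argument.
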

\begin{proof}
From Theorem~\pref{T:Gamma_generators} and
Remark~\pref{R:Gamma_generators_only_p<=d} we deduce that the elements of
total degree $\leq d$ generates $\Gamma^d_A(A[x_1,\dots,x_r])$. The statement
then follows from Corollary~\pref{C:elem_equiv_power_sums_low_deg}.
\end{proof}

\begin{corollary}\label{C:generators_in_char_p}
Let $A$ be of equal or mixed characteristic $p$, i.e. $p$ is the only
non-invertible prime in $A$.
Then $\Gamma^d_A(A[x_1,\dots,x_r])$ is minimally generated as an $A$-algebra
by the elements $\gamma^{p^s}(x^\alpha)\times\gammaid{d-p^s}$ with
$s\in\N$ and $\alpha\in\N^r\setminus 0$ such that $p\notdivide \alpha$ and
$\Qp{p^s\alpha}\leq \Qp{d}$
or equivalently $\Qp{\alpha}\leq \Qpp{\floor{d/p^s}}$.
\end{corollary}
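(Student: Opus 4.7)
The plan is to apply Theorem~\pref{T:Gamma_generators} with the first collection $\calC_1$ from Corollary~\pref{C:Newton_generators2}. Since $\primes(A) = \{p\}$, a pair $(k, x^\alpha) \in \calC_1$ must have $\gcd(\alpha) \cdot 1_A$ invertible in $A$---equivalently $p \notdivide \alpha$---and $k$ a product of primes in $\{p\}$, i.e.\ $k = p^s$ for some $s \in \N$. The subset $\widetilde{\calC}_1$ then picks out those $(p^s, \alpha)$ satisfying $p^s|\alpha| \leq d$ or $\Qp{p^s\alpha} \leq \Qp{d}$; only the single prime $p$ can enter the second alternative.

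I would first absorb the first alternative into the second. Lemma~\pref{L:ordp==0_and_Qp} gives $\Qp{p^s\alpha} \leq \Qpp{p^s|\alpha|}$, while a direct lex-comparison of base-$p$ digits from the top shows that $n \mapsto \Qp{n}$ is order-preserving in the sense of Definition~\pref{D:comp_pols}; hence $p^s|\alpha| \leq d$ implies $\Qpp{p^s|\alpha|} \leq \Qp{d}$. This establishes the first form of the condition in the corollary and also makes the boundedness $p^s \leq d$ automatic: if $p^s > d$ then $\deg \Qp{p^s\alpha} \geq s > \deg \Qp{d}$ already violates $\Qp{p^s\alpha} \leq \Qp{d}$.

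For the equivalent reformulation, the key identity is $\Qp{p^s n}(t) = t^s \Qp{n}(t)$, reflecting that multiplication by $p^s$ shifts base-$p$ digits up by $s$ places. Writing $d = p^s \floor{d/p^s} + r$ with $0 \leq r < p^s$ gives $\Qp{d}(t) = t^s \Qpp{\floor{d/p^s}}(t) + \Qpp{r}(t)$ with $\deg \Qpp{r} < s$. The comparison $\Qp{p^s\alpha} \leq \Qp{d}$ reads $t^s \Qp{\alpha} \leq t^s \Qpp{\floor{d/p^s}} + \Qpp{r}$; since the low-degree remainder $\Qpp{r}$ cannot alter the polynomial-order sign of any non-zero difference of the degree-$\geq s$ parts, and is itself non-negative when those parts agree, this is equivalent to $\Qp{\alpha} \leq \Qpp{\floor{d/p^s}}$.

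The only genuine (and quite mild) obstacle is the monotonicity $m \leq n \Rightarrow \Qp{m} \leq \Qp{n}$, which is not stated explicitly in the paper but follows in one line by comparing base-$p$ expansions from the highest position down: if $m < n$ and the expansions first differ at position $i$, then the coefficient of $t^i$ in $\Qp{n} - \Qp{m}$ is strictly positive, so $\Qp{n}$ dominates $\Qp{m}$ for large $t$.
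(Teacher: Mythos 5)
Your proposal is correct and follows the same route as the paper, which simply cites Theorem~\pref{T:Gamma_generators} with the collection $\calC_1$ and leaves the two small bookkeeping steps implicit. The two lemmas you supply --- monotonicity of $n\mapsto\Qp{n}$ combined with Lemma~\pref{L:ordp==0_and_Qp} to absorb the alternative $p^s|\alpha|\leq d$, and the digit-shift identity $\Qpol{p}{p^s n}(t)=t^s\Qp{n}(t)$ to justify the reformulation $\Qp{\alpha}\leq\Qpp{\floor{d/p^s}}$ --- are exactly what the paper takes for granted, and your verification of both is sound.
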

\begin{proof}
Follows immediately from Theorem~\pref{T:Gamma_generators} using the
first collection of Corollary~\pref{C:Newton_generators2}.
\end{proof}

\begin{corollary}[{\cite[Thm. 4.6]{fleischmann}}]\label{C:multidegree_bound}
Let $A$ be an arbitrary ring. Then
$\Gamma^d_A(A[x_1,\dots,x_r])$ is generated as an $A$-algebra by
$\gamma^d(x_1),\gamma^d(x_2),
\dots,\gamma^d(x_r)$ and the elements $\gamma^k(x^\alpha)\times\gammaid{d-k}$
with $k\alpha\leq(d-1,d-1,\dots,d-1)$. Further, there
is no smaller multidegree bound and if $d=p^s$ for some prime $p$ not
invertible in $A$, then $\Gamma^d_A(A[x_1,\dots,x_r])$ is not generated
by elements of strictly smaller multidegree.
\end{corollary}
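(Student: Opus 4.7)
The plan is to deduce the corollary from the minimal generating set furnished by Theorem~\pref{T:Gamma_generators}, using the second collection $\calC_2$ of Corollary~\pref{C:Newton_generators2}. Recall that $\gamma^k(x^\alpha)\times\gammaid{d-k}$ lies in the minimal set $\widetilde{\calC}$ precisely when $\gcd(\alpha)=1$ and either $k|\alpha|\leq d$ or $\Qp{k\alpha}\leq\Qp{d}$ for some non-invertible prime $p$. I would first verify that every such generator either has multidegree bounded componentwise by $(d-1,\dots,d-1)$ or coincides with one of $\gamma^d(x_1),\dots,\gamma^d(x_r)$, and then, for sharpness, exhibit an unavoidable generator of multidegree exactly $(d-1,\dots,d-1)$ when $d=p^s$ and $p\in\primes(A)$.

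For the first assertion, fix $(k,\alpha)\in\widetilde{\calC}$ with $k\alpha_i\geq d$ for some index $i$; I claim $\alpha=\basis{i}$ and $k=d$. If $k|\alpha|\leq d$, then $d\leq k\alpha_i\leq k|\alpha|\leq d$ forces $\alpha_j=0$ for $j\neq i$, and $\gcd(\alpha)=1$ gives $\alpha_i=1$, $k=d$. If instead $\Qp{k\alpha}\leq\Qp{d}$, then since each $\Qp{k\alpha_j}$ has non-negative coefficients we have $\Qp{k\alpha_i}\leq\sum_j\Qp{k\alpha_j}=\Qp{k\alpha}\leq\Qp{d}$. Combined with the implication $\Qp{m}\leq\Qp{n}\Rightarrow m\leq n$ in the polynomial order of Definition~\pref{D:comp_pols} (verified by a short induction, comparing the degrees and leading base-$p$ digits of the two expansions), this gives $k\alpha_i=d$ and hence $\Qp{k\alpha_i}=\Qp{d}$. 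The remaining terms then satisfy $\sum_{j\neq i}\Qp{k\alpha_j}\leq 0$ asymptotically, but a polynomial with non-negative coefficients which is asymptotically non-positive must vanish identically, so $\alpha_j=0$ for $j\neq i$ and we conclude as before.

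For sharpness when $d=p^s$ with $p\in\primes(A)$, I would exhibit a necessary generator of multidegree exactly $(d-1,\dots,d-1)$. For $r\geq 2$ take $(k,\alpha)=\bigl(d-1,(1,\dots,1)\bigr)$: then $\gcd(\alpha)=1$ and $k|\alpha|=(d-1)r>d$, but $\Qp{d}=t^s$ while $\Qp{d-1}=(p-1)(t^{s-1}+\dots+t+1)$, so $\Qp{k\alpha}=r(p-1)(t^{s-1}+\dots+1)$ has degree $s-1<s$ and is asymptotically dominated by $\Qp{d}$. Hence $(k,\alpha)\in\widetilde{\calC}$, and $\gamma^{d-1}(x_1 x_2\cdots x_r)\times\gammaid{1}$ is an unavoidable generator of the claimed multidegree. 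For $r=1$ the element $e_{d-1}=\gamma^{d-1}(x_1)\times\gammaid{1}$ has multidegree $d-1$ and is necessary by Corollary~\pref{C:low-deg-necessary}. The only real obstacle is spotting the right $(k,\alpha)$ for the sharpness step; the choice $\alpha=(1,\dots,1)$ works precisely because $\Qp{p^s-1}$ has degree one less than $\Qp{p^s}$, which is what allows the factor $r$ to be absorbed asymptotically.
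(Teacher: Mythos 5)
Your plan and computations reproduce, in substance, the paper's own proof: both arguments reduce the bound to the criterion of Theorem~\pref{T:Gamma_generators}, and both verify the $d=p^s$ sharpness by showing that $\Qp{k\alpha}$ for $k\alpha=(d-1,\dots,d-1)$ has degree $s-1<s=\deg\Qp{d}$. Your treatment of the bound is slightly more expansive (you explicitly run the two cases $k|\alpha|\leq d$ and $\Qp{k\alpha}\leq\Qp{d}$ and spell out why $\Qp{m}\leq\Qp{n}\Rightarrow m\leq n$ before using it), where the paper simply observes that $k\alpha_i\geq d$ and $|k\alpha|>d$ force $\Qp{k\alpha}>\Qp{d}$; these are the same calculation.

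There is, however, one genuine omission. The corollary contains an unconditional sharpness claim — ``there is no smaller multidegree bound'' — which is asserted for arbitrary $A$, not just for $d=p^s$ with $p\in\primes(A)$. Your proposal addresses sharpness only in the $d=p^s$ case (via the single generator at $(d-1,\dots,d-1)$), and your use of $e_{d-1}=\gamma^{d-1}(x_1)\times\gammaid{1}$ is confined to the subcase $r=1$. The paper disposes of the unconditional claim by noting that for every $i$ the element $\gamma^{d-1}(x_i)\times\gammaid{1}$, of multidegree $(d-1)\basis{i}$, has total degree $d-1\leq d$ and is therefore necessary by Corollary~\pref{C:low-deg-necessary}; consequently any componentwise bound $\beta$ with some $\beta_i<d-1$ fails. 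You already invoke exactly this mechanism for $r=1$, so the fix is one sentence — but as written the blanket sharpness assertion is unproved for $r\geq 2$ when $d$ is not a prime power of a non-invertible prime.
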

\begin{proof}
If $k\alpha_i\geq d$ and $|k\alpha|>d$ then $\Qp{k\alpha}>\Qp{d}$ for any
prime $p$ which shows that $\gamma^k(x^\alpha)\times\gammaid{d-k}$
is not among the minimal generators of Theorem~\pref{T:Gamma_generators}. On
the other hand ${(d-1)\basis{i}}={(0,\dots,0,{d-1},0,\dots,0)\in\N^r}$ is the
multidegree of a minimal generator and it follows that there is no smaller
multidegree bound. If $d=p^s$ then $\Qpp{(d-1,d-1,\dots,d-1)}<\Qp{d}$
which shows that there is an element of every multidegree
$\leq (d-1,d-1,\dots,d-1)$ in any generating set.
\end{proof}

From Corollary~\pref{C:multidegree_bound} we immediately obtain:

\begin{corollary}[{\cite[Thm. 4.6, 4.7]{fleischmann}}]
\label{C:non-sharp_total_degree_bound}
If $A$ is an arbitrary ring then $\Gamma^d_A(A[x_1,\dots,x_r])$ is generated as
an $A$-algebra by elements $\gamma^k(x^\alpha)\times\gammaid{d-k}$ of total
degree $k|\alpha|\leq {\max\bigl(d,r(d-1)\bigr)}$. Further, this total degree
bound is sharp if $d=p^s$ for some prime $p\in\primes(A)$.
\end{corollary}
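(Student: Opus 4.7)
My plan is to deduce this statement as an immediate consequence of the multidegree bound established in Corollary~\pref{C:multidegree_bound}, exploiting the tautological observation that the total degree of $\gamma^k(x^\alpha)\times\gammaid{d-k}$ equals the sum of the components of its multidegree $k\alpha$.

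For the upper bound, I would run through the generators provided by Corollary~\pref{C:multidegree_bound}. They are, on the one hand, the elements $\gamma^d(x_i)$ of total degree $d$, and, on the other hand, elements $\gamma^k(x^\alpha)\times\gammaid{d-k}$ with $k\alpha\leq(d-1,\dots,d-1)$ componentwise. In the latter case, summing the coordinate bounds gives $k|\alpha|\leq r(d-1)$. Hence every generator has total degree at most $\max\bigl(d,r(d-1)\bigr)$.

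For the sharpness claim when $d=p^s$ with $p\in\primes(A)$, I would invoke the second half of Corollary~\pref{C:multidegree_bound}, which guarantees that an element of multidegree $(d-1,\dots,d-1)$ lies in every generating set; such an element has total degree exactly $r(d-1)$. When $r\geq 2$ this attains the maximum, since $r(d-1)\geq d$ whenever $d\geq 2$. The degenerate case $r=1$ collapses the bound to $d$, which is attained by $\gamma^d(x_1)=e_d$, an element forced into every generating set of the polynomial ring $\Gamma^d_A(A[x_1])=A[e_1,\dots,e_d]$.

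The heavy lifting has already been done in Corollary~\pref{C:multidegree_bound}; the only routine bookkeeping is the inequality $r(d-1)\geq d$ for $r\geq 2$, $d\geq 2$, so I expect no substantive obstacle.
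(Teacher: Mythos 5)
Your proposal is correct and matches the paper's approach exactly: the paper also states that Corollary~\pref{C:non-sharp_total_degree_bound} follows immediately from Corollary~\pref{C:multidegree_bound}, and your argument just spells out the one-line deduction (summing the componentwise multidegree bound $k\alpha\leq(d-1,\dots,d-1)$ gives $k|\alpha|\leq r(d-1)$, and the forced generator of multidegree $(d-1,\dots,d-1)$ in the $d=p^s$ case attains $r(d-1)\geq d$). The separate treatment of $r=1$ is a welcome bit of care, though vacuous here since $r(d-1)<d$ there and the bound is $d$ anyway.
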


A more careful examination of the conditions in the theorem gives a sharp
total degree bound on the generators of $\Gamma^d_A(B)$:

\begin{corollary}\label{C:sharp_total_degree_bound}
Let $d$ be an integer. For every prime $p$ we let $1\leq a_p\leq p-1$ and
$b_p\in\N$ be the unique integers such that $d=a_p p^{b_p}+c_p$ for some
$0\leq c_p<p^{b_p}$. For any ring $A$ the $A$-algebra
$\Gamma^d_A(A[x_1,\dots,x_r])$ is then
minimally generated by elements of total degree at most
$$\max\left\{d,\max_{p\in\primes(A)}\left( (a_p+r-1)p^{b_p}-r\right)\right\}$$
and every generating set contains an element \emph{attaining this bound}.
\end{corollary}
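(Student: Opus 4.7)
The plan is to apply Theorem~\pref{T:Gamma_generators}, which gives a canonical minimal generating set of $\Gamma^d_A(B)$ indexed by the set $\widetilde{\calC}$ of $(k,\alpha)$ with either $k|\alpha|\le d$ or $\Qp{k\alpha}\le\Qp{d}$ polynomially for some $p\in\primes(A)$; each such generator $\gamma^k(x^\alpha)\times\gammaid{d-k}$ has total degree $k|\alpha|$. The first case contributes generators of total degree at most $d$, so the task reduces to bounding $|k\alpha|$ in the second case, exhibiting a generator that attains the maximum, and deducing the ``every generating set contains'' statement from the multigraded structure.

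Fix $p\in\primes(A)$ and write $m_i=k\alpha_i$, $\Qp{d}=a_pt^{b_p}+\Qp{c_p}$. Since each $\Qp{m_i}$ has non-negative coefficients, the polynomial inequality $\sum_i\Qp{m_i}\le\Qp{d}$ forces $\deg\Qp{m_i}\le b_p$, so $m_i<p^{b_p+1}$; writing $m_i=c_ip^{b_p}+r_i$ with $0\le c_i\le p-1$ and $0\le r_i<p^{b_p}$, comparison of the $t^{b_p}$-coefficients yields $\sum_i c_i\le a_p$. If $\sum_i c_i<a_p$, then $\sum_i m_i\le(a_p-1)p^{b_p}+r(p^{b_p}-1)=(a_p+r-1)p^{b_p}-r$. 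If $\sum_i c_i=a_p$, cancelling the leading term reduces the inequality to $\sum_i\Qp{r_i}\le\Qp{c_p}$, and strong induction on $b_p$ applies, with base cases $b_p=0$ or $c_p=0$ giving $\sum m_i\le d$; the induction step uses the elementary inequality $(p-1+r-1)p^{b_p-1}\le(r-1)p^{b_p}$, equivalent to $(p-1)(r-2)\ge 0$ and hence valid for $r\ge 2$. The $r=1$ case is handled separately: digit-by-digit comparison of $\Qp{m_1}\le\Qp{d}$ in base $p$ directly yields $m_1\le d$, absorbed by the $d$-term in the outer max.

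For achievability, pick $p\in\primes(A)$ maximizing $D:=(a_p+r-1)p^{b_p}-r$ and set $\beta=(a_pp^{b_p}-1,p^{b_p}-1,\dots,p^{b_p}-1)\in\N^r\setminus\{0\}$, so $|\beta|=D$. A direct computation gives $\sum_i\Qp{\beta_i}=(a_p-1)t^{b_p}+r(p-1)(t^{b_p-1}+\dots+1)$, whence $\Qp{d}-\sum_i\Qp{\beta_i}$ has leading coefficient $+1$ in $t^{b_p}$ and is thus eventually positive, so $\Qp{\beta}\le\Qp{d}$ polynomially. Putting $k=\gcd(\beta)\le p^{b_p}-1<d$ and $\alpha=\beta/k$ puts $(k,\alpha)\in\calC_2\cap\widetilde{\calC}$, producing a minimal generator of total degree $D$; if $d$ dominates the max, the minimal generator $\gamma^d(x_1)$ (from $(d,\basis{1})\in\calC_2\cap\widetilde{\calC}$) attains the bound $d$ instead. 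For necessity, reducing an arbitrary generating set $S$ to its multihomogeneous components and using $g:=\gamma^k(x^\alpha)\times\gammaid{d-k}\notin\Gamma^d_A(B)_\ltdeg{\beta}$ from Theorem~\pref{T:Gamma_generators}, the multidegree-$\beta$ part of any expression of $g$ in terms of $S$ must involve a factor of multidegree exactly $\beta$, forcing $S$ to contain an element of total degree $|\beta|=\max\{d,D\}$. The main technical obstacle is the induction on $b_p$ in the second paragraph together with the separate treatment of the $r=1$ case; everything else is calculation combined with the main theorem and multigraded bookkeeping.
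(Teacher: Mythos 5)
Your proof is correct and shares the paper's overall skeleton: apply Theorem~\pref{T:Gamma_generators}, bound the total degree of a minimal generator via the base-$p$ digit condition, exhibit the same extremal multidegree $\beta=(a_pp^{b_p}-1,p^{b_p}-1,\dots,p^{b_p}-1)$ to show sharpness, and dispose of $r=1$ separately. The one place where you genuinely diverge is the degree bound itself. You prove the direct implication $\Qp{k\alpha}\le\Qp{d}\Rightarrow k|\alpha|\le D$ by splitting on the sum $\sum c_i$ of the digits of the $m_i=k\alpha_i$ at position $b_p$: the case $\sum c_i<a_p$ is immediate, and the case $\sum c_i=a_p$ is reduced to the lower-order part $c_p$ by strong induction on $b_p$, using the inequality $(p+r-2)p^{b_p-1}\le(r-1)p^{b_p}$ (i.e.\ $(r-2)(p-1)\ge 0$). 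The paper instead proves the contrapositive $k|\alpha|>\max\{d,D\}\Rightarrow\Qp{k\alpha}>\Qp{d}$ by a non-inductive greedy extraction: it builds $\alpha_b,\dots,\alpha_0\le k\alpha$ with prescribed digit polynomials $a t^b,(p-1)t^{b-1},\dots,(p-1)$, then uses that $\Qp{\cdot}$ is monotone on single integers to bound $\Qp{k\alpha}$ from below, and concludes $\ne\Qp{d}$ from $k|\alpha|>d$. The same elementary inequality $(r-2)(p-1)\ge 0$ powers both arguments, just at different spots. Your route is a bit more transparent to verify termwise; the paper's avoids induction. Minor further differences: the paper handles $r=1$ by invoking $\Gamma^d_A(A[x])=A[e_1,\dots,e_d]$ directly rather than by digit comparison, and it leaves the claim that \emph{every} generating set contains an element of the maximal total degree as an implicit consequence of $\gamma^k(x^\alpha)\times\gammaid{d-k}\notin\Gamma^d_A(B)_{\ltdeg{k\alpha}}$, whereas you spell out the multigraded bookkeeping (any element with a nonzero multidegree-$\beta$ component has total degree $\ge|\beta|$), which is a worthwhile addition.
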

\begin{proof}
If $r=1$ then the bound becomes $d$ and is sharp as
$\Gamma^d_A(A[x])=A[e_1,e_2,\dots,e_d]$ so we will assume that $r\geq 2$.
Let $p$ be a prime not invertible in $A$ and
$\gamma^k(x^\alpha)\times\gammaid{d-k}\in\Gamma^d_A(B)$ an element of total
degree $k|\alpha|>\max\{d,(a-1)p^b+r(p^b-1)\}$ where $a=a_p$ and $b=b_p$.

If $\beta\in\N^r$ is such that $|\beta|>(l-1)p^m+r(p^m-1)$ for some integer
$1\leq l\leq p-1$ then there exists $\beta'\leq\beta$ such that
$\Qp{\beta'}=l t^m$ and we have that
\begin{align*}
|\beta-\beta'|>(r-1)p^m-r
&=\bigl(r(p-1)-p\bigr)p^{m-1}+r(p^{m-1}-1)\\
&\geq (p-2)p^{m-1}+r(p^{m-1}-1)
\end{align*}
as $r\geq 2$.

We can thus find $\alpha_b,\alpha_{b-1},\dots,\alpha_0$ such
that $\Qp{\alpha_b}=at^b$, $\Qp{\alpha_m}=(p-1)t^m$ for $m<b$ and
$k\alpha\geq \alpha_b+\alpha_{b-1}+\dots+\alpha_0$. This shows that
$$\Qp{k\alpha}\geq at^b+(p-1)t^{b-1}+(p-1)t^{b-2}+\dots+(p-1)\geq \Qp{d}$$
and as $k|\alpha|>d$ we have that $\Qp{k\alpha}\neq \Qp{d}$.
By Theorem~\pref{T:Gamma_generators} this implies that
$\gamma^k(x^\alpha)\times\gammaid{d-k}$ is generated by elements of lower
degree.

To show that the bound is attained, consider the element
$\gamma^k(x^\alpha)\times\gammaid{d-k}$ with
$$k\alpha=(ap^b-1,p^b-1,\dots,p^b-1), \quad \text{$\gcd(\alpha)$ invertible}$$
which is not generated by elements of lower degree since
\begin{align*}
\Qp{k\alpha}
&=\Qpp{(ap^b-1,p^b-1,\dots,p^b-1)}\\
&=(a-1)t^b+r(p-1)t^{b-1}+r(p-1)t^{b-2}+\dots+r(p-1)\\
&<\Qp{d}.
\end{align*}
\end{proof}

\begin{remark}
The inequality
$$(a_p+r-1)p^{b_p}-r\leq r\left(a_p p^{b_p}-1\right) \leq r(d-1)$$
with equality if and only if $d=p^{b_p}$, or $r=1$ and $d=a_p p^{b_p}$,
together with Corollary~\pref{C:sharp_total_degree_bound} gives another proof
of Corollary~\pref{C:non-sharp_total_degree_bound}. Further we see that the
total degree bound $\max\bigl(d,r(d-1)\bigr)$ is sharp if and only if
$r(d-1)\leq d$ or $d=p^s$, that is if and only if one of the following
conditions is satisfied
\begin{enumerate}
\item $r=1$.
\item $r=2$ and $d=2$.
\item $d=p^s$ with $p\in\primes(A)$.
\end{enumerate}
\end{remark}

\begin{corollary}[{\cite[Thm. 1]{briand_elem_multi_gens}}]
\label{C:elementary_gen}
$\Gamma^d_A(A[x_1,\dots,x_r])$ is generated as an $A$-algebra by elementary
multisymmetric polynomials if and only if one of the following conditions is
satisfied
\begin{enumerate}
\item $r=1$.\label{EN:C:elementary_gen_1}
\item $d!$ is invertible in $A$.\label{EN:C:elementary_gen_2}
\item $r=2$ and $d=2$.\label{EN:C:elementary_gen_3}
\item $r=2$, $d=3$ and $3$ is invertible in $A$.\label{EN:C:elementary_gen_4}
\end{enumerate}
\end{corollary}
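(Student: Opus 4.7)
The plan is to combine Proposition~\pref{P:elementary_gen_low_deg_Newton} (which controls the subalgebra of total degree $\leq d$) with Theorem~\pref{T:Gamma_generators} (which controls when extra minimal generators of total degree $>d$ are needed). Since every elementary multisymmetric polynomial $e_\alpha=\gamma^\alpha(x)\times\gammaid{d-|\alpha|}$ has total degree $|\alpha|\leq d$, the subalgebra they generate lies inside $A\bigl[\Gamma^d_A(B)_{\beta,|\beta|\leq d}\bigr]$. Hence the $e_\alpha$'s generate $\Gamma^d_A(B)$ if and only if both (A) they generate $A\bigl[\Gamma^d_A(B)_{\beta,|\beta|\leq d}\bigr]$, and (B) no $(k,\alpha)\in\widetilde\calC$ from Theorem~\pref{T:Gamma_generators} has $k|\alpha|>d$. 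Condition (A) is exactly the content of Proposition~\pref{P:elementary_gen_low_deg_Newton}.

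I would first record the reduction that primes $p\in\primes(A)$ with $p>d$ never create a minimal generator of total degree $>d$: for $(k,\alpha)$ with $k|\alpha|>d$, either all $k\alpha_i<p$, giving $\Qp{k\alpha}=k|\alpha|>d=\Qp{d}$, or some $k\alpha_i\geq p>d$, forcing $\Qp{k\alpha}$ to have positive $t$-degree and thus exceed the constant $\Qp{d}=d$. So in verifying (B) only primes $p\leq d$ matter.

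For the \emph{if} direction, I would check (A) and (B) case by case. In (i), $\Gamma^d_A(A[x])=A[e_1,\ldots,e_d]$ is a polynomial ring. In (ii), $(d-1)!$ is invertible so (A) holds by Proposition's condition (iv), and (B) follows from the reduction since $\primes(A)$ contains only primes $>d$. In (iii), $(d-1)!=1$ is invertible giving (A); for (B) one checks directly that no $(k,\alpha)$ with $\gcd(\alpha)=1$ and $k|\alpha|>2$ satisfies $\Qpol{2}{k\alpha}\leq\Qpol{2}{2}=t$. In (iv), (A) is Proposition's condition (ii); for (B) the only relevant prime is $p=2$, and a short finite inspection shows no $(k,\alpha)$ with $\gcd(\alpha)=1$ and $k|\alpha|>3$ satisfies $\Qpol{2}{k\alpha}\leq\Qpol{2}{3}=t+1$.

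For the \emph{only if} direction, assume none of (i)--(iv) hold. If (A) fails we are done. Otherwise we must be in case (ii), (iii), or (iv) of Proposition~\pref{P:elementary_gen_low_deg_Newton}, and the task reduces to exhibiting $(k,\alpha)\in\widetilde\calC$ with $k|\alpha|>d$, falsifying (B). In Proposition's case (ii), the negation of Corollary's (iv) forces $3\in\primes(A)$, and $(2,(1,1))$ works since $\Qpol{3}{(2,2)}=4\leq t=\Qpol{3}{3}$. In Proposition's case (iii), the negation of Corollary's (ii) forces $2\in\primes(A)$, and $(3,(1,1))$ works since $\Qpol{2}{(3,3)}=2t+2\leq t^2=\Qpol{2}{4}$. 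In Proposition's case (iv) the negation of Corollary's (ii) forces $d$ to be prime and in $\primes(A)$, while the negations of (i) and (iii) give $r\geq 2$ and $(r,d)\neq(2,2)$: if $d\geq 3$ take $(d-1,(1,1))$ giving $\Qpol{d}{(d-1,d-1)}=2(d-1)\leq t=\Qpol{d}{d}$, and if $d=2$ then $r\geq 3$ and $(1,\basis{1}+\basis{2}+\basis{3})$ yields $\Qpol{2}{(1,1,1)}=3\leq t$. There is no real technical obstacle beyond careful case enumeration; the polynomial criterion of Theorem~\pref{T:Gamma_generators} reduces every step to a mechanical comparison of $\Qp{}$-polynomials.
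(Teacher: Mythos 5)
Your proposal is correct and follows essentially the same route as the paper: deduce the result by combining Proposition~\pref{P:elementary_gen_low_deg_Newton} (for the subalgebra generated in total degree $\le d$) with Theorem~\pref{T:Gamma_generators} (to detect extra minimal generators of total degree $>d$ via the $\Qp{}$-criterion). The organization as an ``(A)~and~(B)'' biconditional is a slightly cleaner framing than the paper's step-by-step necessity argument, and a couple of your witnessing multidegrees differ from the paper's (e.g.\ you use $(d-1,d-1)$ at the prime $d$ where the paper uses $(d-1,2)$), but these are cosmetic; the ingredients and the case split are the same.
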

\begin{proof}
If $r=1$ then $\Gamma^d_A(A[x_1,\dots,x_r])$ is the polynomial ring in the
elementary polynomials so \enumref{EN:C:elementary_gen_1} is sufficient and we
can assume that $r\geq 2$.
It then follows from Proposition~\pref{P:elementary_gen_low_deg_Newton} that
every prime such that $2<p<d$ is invertible in $A$. If $d>2$ is a prime
then $x_1^{d-1}x_2^2\times \gammaid{d-1}$ is a sum of products of elements
of total degree $\leq d$ if and only if $d$ is invertible by
Theorem~\pref{T:Gamma_generators} since $\Qpolp{d}{(d-1,2)}<\Qpol{d}{d}$.
Thus it is necessary that every prime such that $2<p\leq d$ is invertible in
$A$. On the other hand condition \enumref{EN:C:elementary_gen_2} is sufficient
by Corollary~\pref{C:generators_in_char_0}.

This leaves the case when $2$ is not invertible in $A$ but every
odd prime $\leq d$ is. By Proposition~\pref{P:elementary_gen_low_deg_Newton} we
can then assume that $r\geq 2$ and $d=2$ or $r=2$ and $d\leq 4$.
If $d=2$ and $r\geq 3$ then $\gamma^1(x_1 x_2 x_3)\times 1$ is not
generated by elements of lower degree as $\Qpolp{2}{(1,1,1)}<\Qpol{2}{3}$.
If $d=4$ then $\gamma^1(x_1^3 x_2^2)\times \gammaid{3}$ is not
generated by elements of lower degree as $\Qpolp{2}{(3,2)}<\Qpol{2}{4}$.
In the remaining cases, $r=2$ and $d=2$ or $d=3$, it is easily seen that
$\Qpol{2}{k\alpha}<\Qpol{2}{d}$ implies that $k|\alpha|\leq d$
and we can conclude with Proposition~\pref{P:elementary_gen_low_deg_Newton}.
\end{proof}

\begin{remark}\label{R:Sym-Chow}
Let $k$ be an algebraically closed field. It can be shown that the Chow scheme
$\Chow_{0,d}(\A{r}_k\inj\PR{r}_k)$, parameterizing $0$-cycles of degree $d$
in $\A{r}_k$, is isomorphic to $\Spec(C)$ where $C$ is the subring of
$\Gamma^d_k(k[x_1,x_2,\dots,x_r])\iso\TS^d_k(k[x_1,x_2,\dots,x_r])$ generated
by the elementary multisymmetric polynomials. This gives a morphism
$$\Sym^d(\A{r}_k):=\Spec\bigl(\TS^d_k(k[x_1,x_2,\dots,x_r])\bigr)\to
\Chow_{0,d}(\A{r}_k\inj\PR{r}_k)$$
which is an isomorphism exactly in the cases listed in
Corollary~\pref{C:elementary_gen}.

In general it is always possible to find a projective embedding
$\A{r}\inj\PR{N}$ such that
$$\Sym^d(\A{r}_k)\to\Chow_{0,d}(\A{r}_k\inj\PR{N}_k)$$
is an isomorphism. A bound on the degree of the generators of the ring
$\TS^d_k(k[x_1,x_2,\dots,x_r])$ such as
Corollary~\pref{C:multidegree_bound} gives an \emph{effective}
answer to the embedding needed to obtain such an isomorphism.

These issues are thoroughly discussed in~\cite{rydh_gammasymchow_inprep}.
\end{remark}

\begin{remark}
The results of \S\S\ref{S:gen_of_Gamma}-\ref{S:applications}
immediately generalize to the case where $B=A[(x_\alpha)_{\alpha\in\I}]$ is the
polynomial ring in an infinite number of variables. This is easily seen
considering statement by statement but as $B$ is the filtered direct limit of
finitely generated polynomial rings, it also follows directly from the fact
that $\Gamma^d_A(\cdot)$ commutes with filtered direct limits as shown in
paragraph~\pref{X:Gamma^d_filt_dir_lims}.
\end{remark}

\end{section}


\bibliographystyle{dary}
\bibliography{GenForGamma}
\end{document}